\documentclass[11pt]{article}
\usepackage{amsmath,amsthm}
\usepackage{amssymb,mathrsfs}
\usepackage{bm,mathtools}
\usepackage{tikz}
\usetikzlibrary{calc}

\usepackage{xcolor}
\usepackage{geometry}
\usepackage[colorlinks=true,
linkcolor=blue!55!black,citecolor=blue!55!black,
urlcolor=blue!55!black,pagebackref]{hyperref}

\makeatletter
\def\@seccntDot{.}
\def\@seccntformat#1{\csname the#1\endcsname\@seccntDot\hskip 0.5em}
\renewcommand\section{\@startsection{section}{1}{\z@}%
{18\p@ \@plus 6\p@ \@minus 3\p@}%
{9\p@ \@plus 6\p@ \@minus 3\p@}%
{\large\bfseries\boldmath}}
\renewcommand\subsection{\@startsection{subsection}{2}{\z@}%
{15\p@ \@plus 6\p@ \@minus 3\p@}%
{6\p@ \@plus 6\p@ \@minus 3\p@}%
{\itshape}}
\renewcommand\subsubsection{\@startsection{subsubsection}{3}{\z@}%
{12\p@ \@plus 6\p@ \@minus 3\p@}%
{\p@}%
{}}
\makeatother

\usepackage{microtype}

\theoremstyle{plain}
\newtheorem{theorem}{Theorem}[section]
\newtheorem{lemma}{Lemma}[section]
\newtheorem{corollary}{Corollary}[section]
\newtheorem{proposition}{Proposition}[section]

\newtheorem{conjecture}{Conjecture}[section]

\theoremstyle{definition}

\newtheorem{remark}{Remark}[section]

\newtheorem{claim}{Claim}[section]

\numberwithin{equation}{section}
\allowdisplaybreaks
\DeclareMathOperator{\dist}{dist}

\DeclareMathOperator{\gap}{gap}

\title{Connected graphs with minimum adjacency spectral gap}
\author{
Lele Liu\footnote{School of Mathematical Sciences, Anhui University, Hefei 230601,
P.R. China. E-mail: \texttt{liu@ahu.edu.cn}. Supported by the National
Nature Science Foundation of China (No. 12471320), and Anhui Provincial Natural Science Foundation for Excellent Young Scholars (No. 2408085Y003).}~,~~~~
Michael Tait\footnote{Department of Mathematics \& Statistics, Villanova University, USA. E-mail: \texttt{michael.tait@villanova.edu}. Research partially supported by NSF grant DMS-2245556, a Villanova University Summer Grant, and a Villanova CLAS Research Semester.}~,~~~~
Yi Wang\footnote{School of Mathematical Sciences, Anhui University, Hefei 230601,
P.R. China. E-mail: \texttt{wangy@ahu.edu.cn}.
Supported by the National Natural Science Foundation of China (No. 12171002, 12331012)}
}
\date{}

\begin{document}
\maketitle

\begin{abstract}
Let $G$ be a connected graph, and let $\lambda_1(G) > \lambda_2(G)$ denote its two largest 
adjacency eigenvalues. The spectral gap of $G$ 
is defined as the difference  
$\lambda_1(G) - \lambda_2(G)$. For integers $r\geq 2$ and $s\geq 0$, the
double kite $DK(r,s)$ is formed by taking two vertex-disjoint copies of
the complete graph $K_r$ and joining one specified vertex of each clique to a path with $s$
internal vertices. Stani\'c (2013) conjectured that every
connected $n$-vertex graph with minimum adjacency spectral gap is a double
kite. In this paper, we confirm this conjecture for sufficiently large $n$.
\par\vspace{2mm}
\noindent{\bfseries Keywords:} adjacency eigenvalues; spectral gap; extremal graph; double kite
\par\vspace{2mm}
\noindent{\bfseries AMS Classification:} 05C35; 05C50; 15A18
\end{abstract}

\tableofcontents

\section{Introduction}

Consider a simple undirected graph $G$ on $n$ vertices, and let $A(G)$ denote its adjacency matrix. 
List its adjacency eigenvalues in nonincreasing order: $\lambda_1(G)\geq\lambda_2(G)\geq\cdots\geq\lambda_n(G)$.
The \emph{spectral gap} of $G$, denoted by $\gap(G)$, is defined as 
$\gap(G)=\lambda_1(G)-\lambda_2(G)$. If $G$ is connected, the Perron--Frobenius theorem implies
that $\lambda_1(G)$ is simple and has a positive eigenvector; in particular, $\gap(G) > 0$.  

The spectral gap is mainly investigated for regular graphs. 
For a $d$-regular graph $G$, we have $\lambda_1(G) = d$. Hence, $dI - A(G)$ is the
Laplacian and $I - A(G)/d$ is the normalized Laplacian of $G$.
Thus, in the regular setting the adjacency spectral gap is precisely the
algebraic connectivity introduced by Fiedler \cite{Fiedler1973}, and, after
division by $d$, it is also equivalent to spectral gap of the simple random walk.

The problem of minimizing the spectral gap within regular graphs has a long
history. Aldous and Fill \cite{Aldous-Fill} conjectured that the maximum
relaxation time of a random walk over all connected regular graphs on $n$
vertices is $(1 + o(1))\frac{3n^2}{2\pi^2}$. Equivalently, every connected $d$-regular graph $G$ on $n$ vertices should
satisfy $\gap(G)\geq (1+o(1))\frac{2d\pi^2}{3n^2}$,
with the bound asymptotically attained for at least one value of $d$.
For cubic graphs, Guiduli \cite{Guiduli1997} proved that a minimizer must be
path-like and built from specified blocks. Building on this, Brand, Guiduli, and Imrich
\cite{Brand2007} subsequently determined the unique cubic graph with minimum spectral gap, thereby
confirming a conjecture of Babai (see \cite{Guiduli1997}). Later, Abdi, Ghorbani, and Imrich
\cite{AbdiGhorbaniImrich2021} determined its asymptotic spectral gap is 
$(1+o(1))2\pi^2/n^2$. For quartic graphs, Abdi, Ghorbani, and Imrich
\cite{AbdiGhorbaniImrich2021} again obtained a path-like structural characterization
for the minimum spectral gap graphs. Subsequently, Abdi and Ghorbani
\cite{AbdiGhorbani2023} determined the structure of
connected quartic graphs on $n$ vertices with a minimum spectral gap, and
proved that the minimum gap is $(1+o(1))4\pi^2/n^2$. 

There is also a closely related extremal problem for the normalized
Laplacian without a regularity assumption. In this setting, spectral gap means smallest nonzero eigenvalue. Aksoy, Chung, Tait, and Tobin
\cite{AksoyChungTaitTobin2018} proved that the minimum normalized
Laplacian spectral gap over connected $n$-vertex graphs is $(1+o(1))54/n^3$, 
thereby settling another conjecture of Aldous and Fill \cite[p.\,216]{Aldous-Fill}.  

The purpose of this paper is to determine how small the
adjacency spectral gap can be among connected graphs of a given order and
to characterize the extremal graphs.
For integers $r\geq 2$ and $s\geq 0$, the \emph{double kite}
$DK(r,s)$ is formed from two vertex-disjoint copies of $K_r$ and a path on $s+2$ vertices 
by identifying each endpoint of the path with one vertex in each clique.
See Fig. \ref{fig:DK} for an illustration. 
The two identified vertices are called the \emph{attachment vertices}.

\begin{figure}[htbp]
\centering
\begin{tikzpicture}[main_node/.style={circle,draw,fill=blue!40,inner sep=.6mm}, scale=.9]
\foreach \i in {2,3,4,5,6,7}
  \node[main_node] (w\i) at (\i,0) {};

\draw[thick] (w7) -- (9,0);
\foreach \i in {0,1,2,...,7}
  \node[main_node] (v\i) at (45*\i:1.1) {};

\foreach \i in {0,1,2,...,7}
    \foreach \j in {0,1,2,...,7}
       \draw[thick] (v\i) -- (v\j);
%
\foreach \i in {0,1,2,...,7}
  \node[main_node] (u\i) at ($(9,0)+(45*\i:1.1)$) {};

\foreach \i in {0,1,2,...,7}
    \foreach \j in {0,1,2,...,7}
       \draw[thick] (u\i) -- (u\j);
\draw[thick] (v0) -- (w2) -- (w3) -- (w4) -- (w5) -- (w6) -- (w7);
\node[main_node] (u0) {};
\end{tikzpicture}
\caption{The double kite $DK(8,6)$}
\label{fig:DK}
\end{figure}

In 2013, Stani\'c \cite{Stanic2013} posed the following conjecture, 
which also was listed as an open problem in \cite{AksoyChungTaitTobin2018}, and collected in \cite[Conjecture 18]{Liu-Ning2023}.

\begin{conjecture}[\cite{Stanic2013}]
Among all connected graphs of a fixed order, the minimum adjacency spectral gap is attained by a double kite. 
\end{conjecture}

The conjecture was initially confirmed by Stani\'c for connected graphs with up to 
$10$ vertices, who also provided some contributions toward the conjecture. In \cite{JKS}, a similar conjecture is made for trees. In this paper, we prove Stani\'c's conjecture for sufficiently large $n$.


\begin{theorem}\label{thm:main}
For all sufficiently large $n$, the connected graph $G$ on $n$ vertices 
with minimum spectral gap is a double kite graph.
\end{theorem}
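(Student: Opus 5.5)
We argue by stability: first pin down the order of magnitude of the minimum gap, then show that every near-extremal graph is close to a double kite, and finally clean up the remaining local structure.

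\textbf{Upper bound from double kites.} We start by estimating $\gap(DK(r,s))$ with $2r+s=n$. For a double kite, $\lambda_1$ and $\lambda_2$ come from the symmetric and antisymmetric eigenvectors with respect to the reflection swapping the two cliques. Both are close to $r-1$ and split only by tunnelling through the path. A transfer-matrix computation along the path shows that an eigenvector with eigenvalue $\lambda\approx r-1$ decays by a factor of about $1/\lambda$ per step. This gives $\gap(DK(r,s))=\Theta\big(r^{-s}\cdot\text{poly}\big)$, more precisely about $2(r-1)^{-(s+1)}$ up to lower-order factors. Optimising over $r$ yields a minimum of order $\exp(-c\,n\log n)$ with $r\approx n/\log n$ scale. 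We record the exact optimum $\min_{r}\gap(DK(r,n-2r))=:g_n$ as an explicit function, together with precise asymptotics.

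\textbf{Structure of an extremal graph.} Let $G$ be extremal, so $\gap(G)\le g_n$, and let $x,y$ be unit eigenvectors for $\lambda_1,\lambda_2$. We use the Rayleigh identity
\[
\gap(G)\ge \frac{\sum_{uv\in E}x_ux_v\big(y_u/x_u-y_v/x_v\big)^2}{\sum_u y_u^2},
\]
which comes from writing $y=fx$ (a ground-state transform). So a tiny gap forces $f=y/x$ to be nearly constant on edges except across a bottleneck where $x_u x_v$ is extremely small. From this we deduce the following. There are two vertex sets $L,R$ carrying almost all the $\ell_2$-mass of $x$, with $f\approx +c$ on $L$ and $f\approx -c'$ on $R$. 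The Perron vector must decay by a factor exponential in $n\log n$ between them. Decay of that size is only possible along a long induced path of low-degree vertices whose neighbours have $x$-values dropping geometrically.

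A counting argument then bounds things. The $x$-mass can drop by a factor of at most about $\lambda_1$ per step, and $\lambda_1\le \max$ degree of the dense part. Combined with comparison against $g_n$, this forces the dense parts to have $\lambda_1(G[L]),\lambda_1(G[R])$ within $o(1)$ of each other and the connecting structure to be a path of length $s$ with $|L|+|R|+s=n$.

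\textbf{Local surgery.} Next we show that $G[L]$ and $G[R]$ are cliques and that the path attaches at single vertices. Suppose, say, $G[L]$ is not complete. Adding a missing edge inside $L$, or merging vertices, changes $\lambda_1$ and $\lambda_2$ by nearly equal amounts, since both eigenvectors are nearly $\pm$ the same Perron vector of $G[L]$. The change in the gap is then governed by the tunnelling term, which is monotone in the product of the local spectral radius and the path length. Likewise, multiple attachments or pendant trees can be rearranged, using perturbation on the path end, to decrease the gap strictly.

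Finally, balancing the two sides is done by an exact comparison: given the structure $K_a$–path–$K_b$, we show that $a=b$ minimises the gap. When $a\ne b$, the two local eigenvalues $a-1,b-1$ differ by at least $1$, which yields a gap of order $1$, far larger than $g_n$. So the two sides must be equal, and $G$ is a double kite.

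\textbf{Main obstacle.} The main difficulty is the stability step, which converts $\gap(G)\le g_n$ into ``two near-cliques joined by a single long path.'' Quantitative control of the eigenvector decay along arbitrary sparse connectors, which may be trees or thickened paths, must be sharp to within a factor $1+o(1)$ in the exponent. First, I would try a path-decomposition lemma: $y_v/x_v$ can change substantially only across vertices of degree at most $2$ in the bottleneck region. Then I would bound the decay by $\prod 1/\lambda_1$ along such vertices.
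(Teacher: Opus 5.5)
There is a genuine gap, and one step is actually wrong.

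\textbf{The local-surgery step.} In the extremal graph, $\lambda_1$ and $\lambda_2$ are the two eigenvalues of an effective $2\times 2$ problem. Its diagonal entries are the spectral radii $\vartheta_1,\vartheta_2$ of the two halves $G[V(B_i)\cup V(C)]$, and its off-diagonal entry is a super-polynomially small tunnelling term $t$. In the paper's notation, $\gap(G)\approx\sqrt{(\vartheta_1-\vartheta_2)^2+4t^2}$, so in particular $|\vartheta_1-\vartheta_2|\lesssim\gap(G)$.

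Adding a missing edge $ab$ inside $L$ alone raises $\vartheta_1$ by at least $2\widehat z(a)\widehat z(b)$, where $\widehat{\bm z}$ is the Perron vector of that half. This is only polynomially small in $q$, since the blocks have bounded diameter. The new gap is therefore polynomially small, i.e.\ astronomically larger than before. Your first-order reasoning (``both eigenvectors are nearly $\pm$ the same Perron vector, so $\lambda_1,\lambda_2$ move by nearly equal amounts'') fails because $\lambda_1$ and $\lambda_2$ are separated by far less than the size of the perturbation. It is the same detuning effect you invoke yourself in the final $a\neq b$ step.

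The paper never modifies one side alone. It compares $G$ with graphs $G_1,G_2$, each made of two copies of a single block $B_i$ joined by a path with $s\pm d$ internal vertices. It uses $T_{11}T_{22}/T_{12}^2=1+O(q^{-2})$, so one of them is essentially no worse than $G$. Only then does it replace both copies of $B_i$ by $K_m$ (same path) or $K_{m-1}$ (path one longer), tracking the free-end Perron coordinate of a half $X_L(B_i,u_i)$.

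Even that replacement is not monotone in the way you assert. Completing $B_i$ raises the spectral radius, which helps. But it may also raise the root coordinate $z_u$ when $u$ is far from the Perron maximum, which hurts. The paper needs the defect identity $\Delta=\sum_v(1-c_v)\geq\Psi_d(c)$ to show that the gain $A\Delta$, with $A=s/m\approx\log m$, beats the loss $\log c$. The $K_{m-1}$ alternative is required when $A<\log m$.

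\textbf{The stability step.} You correctly identify this as the heart of the matter, but your sketch has no mechanism that yields \emph{exact} structure. Decay and counting arguments of the kind you describe lose polynomial factors. After comparison with the double-kite bound, they locate the structure only up to $O(1)$ misplaced vertices; in the paper this is $e_1+e_2\leq 2$, coming from $q^{e-1}\leq 5n$. They cannot distinguish a path from a path with a pendant vertex or a chord, and your proposed ``path-decomposition lemma'' is unproven. The paper gets exactness from minimality in two places your plan lacks.
\begin{itemize}
\item \emph{Edge deletion.} If $uv$ is not a cut edge but $y_uy_v\leq 0$, the Poincar\'e separation theorem applied to $\operatorname{span}(\bm y_+,\bm y_-)$ gives $\lambda_2(G-uv)\geq\lambda_2$, while $\lambda_1(G-uv)<\lambda_1$. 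This contradicts minimality. Together with connectivity of the nodal domains and simplicity of $\lambda_2$, it forces the sign classes of $\bm y$ to meet through a single cut edge or a single zero vertex. That is what makes the petal bound $\gap(G)\geq z_1(u_1)z_2(u_2)$ and the block/corridor decomposition available.
\item \emph{Corridor comparison.} If $C$ is not a path between $\widehat v_1$ and $\widehat v_2$, then $\dist_C(\widehat v_1,\widehat v_2)\leq s-2$. Hence $\tau_C(x)\geq x^{-(s-1)}$, about $q$ times the path value $1/p_s(x)\approx\sigma(x)^{-s}$. A factor of $q$ is exactly what counting cannot resolve. It does, however, dominate the $1+O(q^{-2})$ errors in the two-pole gap formula, and via the symmetrized competitors it contradicts minimality.
\end{itemize}
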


In fact, our proof yields more precise information about the extremal graphs: for any double kite attaining the minimum spectral gap, 
the clique size is asymptotically $n/(2\log n)$, and the corresponding minimal spectral gap at most
$\exp\{-n\log n+n\log\log n+O(n)\}$.


The proof of Theorem \ref{thm:main} is quite involved.
To clarify the overall strategy, we provide the following high-level outline.
Let $G$ be a connected $n$-vertex graph minimizing the adjacency spectral gap.

1. Establish a sharp upper bound on $\gap(G)$ using double kites.
The proof begins by analyzing the eigenvector recurrence along the path of a double kite.
Taking two cliques, each of order about $n/(2\log n)$, and joining them by a long path yields 
a graph whose spectral gap is extraordinarily small:
$\gap(G)\leq\exp\{-n\log n+n\log\log n+O(n)\}$. 

2. Split the extremal graph $G$ into two petals. 
Let $\bm{y}$ be an eigenvector corresponding to $\lambda_2(G)$, and partition the 
vertices according to whether $\bm{y}$ is positive, negative, or zero. 
In Section \ref{sec:extremal-graphs}, 
an edge-deletion argument (Lemma \ref{lem:edge-deletion}) shows that every edge across which $\bm{y}$ changes 
sign must be a cut edge. We then show that the positive and negative 
vertices each induce a connected subgraph (Lemma \ref{lem:nodal-domains-connected}). 
These two subgraphs, referred to as the two 
petals, are joined either by a cut edge or through a vertex at which $\bm{y}$ has zero entry
(Lemma \ref{lem:zero-skeleton}).

3. Determine the approximate value of $\lambda_2(G)$. We first establish 
a weighted Dirichlet sum identity for $\lambda_1(G) - \lambda_2(G)$ (Lemma \ref{lem:ground-state}). 
Within each petal, we choose a vertex at which its Perron vector is maximal and a shortest path 
from the connecting cut edge to that vertex. 
Combining with the weighted Dirichlet sum, we establish a lower bound on $\lambda_1(G) - \lambda_2(G)$ (Lemma \ref{lem:petal-compression}).
Comparing this lower bound on the spectral gap with the upper bound obtained from the double kite forces
$q:=\lceil\lambda_2(G)\rceil\sim n/(2\log n)$.

4. Reduce to a rough structure.   
We show that the extremal graph $G$ 
consists of two connected graphs $B_1,B_2$, each having $q+O(1)$ vertices and bounded diameter, 
together with a connected subgraph $C$ joining them. There is exactly one edge from each $B_i$ to $C$, while
$|V(C)| = n - 2q + O(1) = \Theta(q\log q)$ and $C$ has bounded maximum degree
(Theorem \ref{thm:terminal-extraction} and Lemma \ref{lem:terminal-parameters}).

5. Prove that $C$ is a path and each $B_i$ is a clique. By analyzing the entries of 
the resolvent matrix $(xI - A(C))^{-1}$ (Lemma \ref{lem:corridor-transfer} and Lemma \ref{lem:loaded-corridor-poles}),
we prove that if $C$ is not a path, then replacing $C$ by a path produces
a connected $n$-vertex graph with a smaller spectral gap, contradicting the minimality of $G$ (Proposition \ref{prop:corridor-is-path}).  
Once $C$ is known to be a path, we compare each $B_i$ with a complete graph of the same order. 
If $B_i$ is not complete, adding missing edges increases its spectral radius, and then gives a smaller spectral gap (Lemma \ref{lem:Bi-clique}). 

6. At this point, we have proved that the extremal graph $G$ consists of two disjoint cliques joined by a long path. 
Finally, we show that the two cliques have equal order, so $G$ is a double kite.

\section{Preliminaries}
\label{sec:preliminaries}

In this section we introduce definitions and notation that will be used throughout the
paper, and prove some preliminary lemmas.

\subsection{Definitions and notation}

Let $X\subseteq V(G)$. We write $G[X]$ for the subgraph induced by  
$X$, and $G\setminus X$ for the graph obtained by deleting $X$. 
For a vertex $v$ of $G$, $d_G(v)$ and $N_G(v)$ denote its degree and 
its neighbors, respectively. The \emph{distance} between two vertices $u$ and $v$ is
written $\mathrm{dist}_G(u,v)$. When the underlying 
graph $G$ is clear from the context, we will omit the subscript $G$ from the notation above. Let $N_X(v)$ denote the set of 
vertices in $X$ that are adjacent to $v$, i.e., $N_X(v)=N_G(v)\cap X$. Given two vertices 
$u$, $v$, we use $u\sim v$ (resp. $u\nsim v$) to indicate that vertices $u$ and $v$ are adjacent 
(resp. nonadjacent). Let $\bm{x}\in\mathbb{R}^n$ be a vector. We use 
$x_u$ or $x(u)$ to denote the entry of $\bm{x}$ corresponding to $u$.
We use $\bm{e}_u$ to denote the unit vector with $u$-th entry is one and zero otherwise, whose dimension is to be understood from the context.
For notation and graph terminology undefined here,
we refer the reader to \cite{Bondy-Murty2008}.

The Perron--Frobenius theorem implies that the adjacency matrix $A(G)$ of a graph $G$ 
has a nonnegative eigenvector corresponding to $\lambda_1(G)$, and this is 
called the \emph{Perron vector} of $G$. 
Let $\bm{x}$ be an eigenvector of $G$ corresponding to an eigenvalue $\lambda$ of $G$. For each $v\in V(G)$, 
we have $\lambda x_v = \sum_{u\in N(v)} x_u$,
and refer this as the \emph{eigenvalue\,--\,eigenvector equation} for $\lambda$.

We will make extensive use of the resolvent of a matrix. Given a square matrix $M$, define the {\em resolvent}
\[
R_M(x) = (xI - A)^{-1}.
\]
When $M$ is clear we will drop the subscript. The resolvent of an adjacency matrix captures properties of the graph, its subgraphs under vertex deletion, and its eigenvectors. If $A$ has eigenvalues $\lambda_k$ with eigenvectors $\mathbf{z}_k$ then $R_A(x)_{ij} = \sum_k \frac{\mathbf{z}_k(i) \mathbf{z}_k(j)}{x-\lambda_k}$, see \cite{Horn-Johnson2012} Chapter 2. This matrix is also closely related to the walk generating function (see \cite{Godsil} Chapter 3), since 
\[
\sum_{k\geq 0} (A^k)t^k = (I-tA)^{-1}.
\]

\subsection{Preliminary lemmas}

We begin with a weighted Dirichlet sum identity for the spectral gap of a connected graph.

\begin{lemma}\label{lem:ground-state}
Let $G$ be a connected $n$-vertex graph. 
If $\bm{x}$ is a positive unit $\lambda_1(G)$-eigenvector,
$\bm{y}$ is a unit $\lambda_2(G)$-eigenvector, and $f_v := y_v/x_v$ for $v\in V(G)$, then
\begin{equation}\label{eq:ground-state}
\lambda_1(G) - \lambda_2(G) = \sum_{uv\in E(G)} x_ux_v (f_u - f_v)^2.
\end{equation}
If $D=\operatorname{diam}(G)$, then
$\lambda_1(G) - \lambda_2(G) \geq\frac{2}{D(n-1) \lambda_1(G)^{D-1}}$.
\end{lemma}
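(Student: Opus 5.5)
The identity is the standard ground-state transform. I would write $y_v=f_vx_v$ and compute
\[
\bm{y}^{T}A\bm{y}=2\sum_{uv\in E}x_ux_vf_uf_v .
\]
Expanding $-(f_u-f_v)^2=2f_uf_v-f_u^2-f_v^2$ gives
\[
\sum_{uv\in E}x_ux_v(f_u-f_v)^2=\sum_{uv\in E}x_ux_v(f_u^2+f_v^2)-\bm{y}^{T}A\bm{y}.
\]
Grouping the first sum by vertex gives $\sum_v f_v^2x_v\sum_{u\sim v}x_u=\sum_v f_v^2\lambda_1x_v^2=\lambda_1\sum_v y_v^2=\lambda_1$, using the eigen-equation for $\bm{x}$. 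Since $\bm{y}^{T}A\bm{y}=\lambda_2$ for a unit eigenvector, the identity \eqref{eq:ground-state} follows.

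For the lower bound I will use two facts.
\begin{itemize}
\item[(i)] Since $\bm{x}\perp\bm{y}$, we have $\sum_v x_v^2f_v=\sum_v x_vy_v=0$. So $f$ takes both signs: there are vertices $a,b$ with $f_a>0>f_b$.
\item[(ii)] Since $\sum_v x_v^2f_v^2=1$ and $\sum_v x_v^2=1$, some vertex has $f_v^2\ge 1$.
\end{itemize}
Moreover, since $f$ has weighted mean zero, every vertex $w$ satisfies $|f_w|\le\max f-\min f$. Hence the range of $f$ satisfies $M:=\max f-\min f\ge 1$.

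Next I need a lower bound on $x_ux_v$ along edges. Let $P$ be a shortest path from the maximizer of $f$ to the minimizer; it has at most $D$ edges. Let $w$ be a vertex maximizing $x$. Then $x_w\ge 1/\sqrt n$, or, more usefully, $x_w^2\ge 1/n$. From $\lambda_1x_v=\sum_{u\sim v}x_u$ we get $x_u\le\lambda_1x_v$ for neighbours, so $x_v\ge x_w\lambda_1^{-\operatorname{dist}(v,w)}\ge x_w\lambda_1^{-D}$. This gives the crude bound $x_ux_v\ge \lambda_1^{-2D}/n$.

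That exponent is too weak; the statement wants $\lambda_1^{-(D-1)}$ and the factor $n-1$. So I will refine with the Cauchy–Schwarz inequality:
\[
\operatorname{gap}\ge\sum_{e\in P}x_ux_v(\Delta_e f)^2\ge \frac{M^2}{\sum_{e\in P}1/(x_ux_v)}.
\]
I then need $\sum_{e\in P}1/(x_ux_v)\le D(n-1)\lambda_1^{D-1}/2$, i.e.\ each edge satisfies $1/(x_ux_v)\le (n-1)\lambda_1^{D-1}/2$.

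This per-edge bound is the main obstacle. My first attempt: for an edge $uv$, every vertex $z$ lies within distance $D$ of both endpoints, so $x_z\le\lambda_1^{\min(d(z,u),d(z,v))}\min(x_u,x_v)$-type bounds hold. A cleaner route is $x_u x_v\lambda_1=x_u\sum_{t\sim v}x_t\ge x_u^2$. Summing, I can bound $1=\sum_z x_z^2$ by splitting $V$ into the vertices closer to $u$ and those closer to $v$. Each $x_z$ is at most $\lambda_1^{D-1}$ times something like $\sqrt{x_ux_v}$, using the distance-$(D-1)$ reach from one endpoint of the edge, and the $n-1$ comes from excluding one endpoint.

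If this is delicate, I would use the fallback $M\ge1$ with a slightly larger count, and then tune constants: the factor $2$ may come from $M^2$ actually being at least $2$. Indeed, $\max f^2+\min f^2\ge$ a weighted bound gives $(\max f-\min f)^2\ge 2$ when both signs are present with mean zero. I expect that to be exactly where the $2$ arises.
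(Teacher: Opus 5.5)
You derive the identity \eqref{eq:ground-state} correctly, by the same computation as the paper. The lower bound has a genuine gap, at exactly the step you flagged. The per-edge estimate $x_ux_v\ge 2/((n-1)\lambda_1^{D-1})$ is false, and no estimate of this kind can work once you replace $M=\max f-\min f$ by an absolute constant.

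\emph{Counterexample.} Take $K_q$ with a pendant path $v\,t_1t_2$ attached at $v\in V(K_q)$. Then $n=q+2$, $D=3$ and $\lambda_1\approx q-1$.
\begin{itemize}
\item The Perron entries along the tail are $x_{t_1}\approx q^{-3/2}$ and $x_{t_2}\approx q^{-5/2}$, so $x_{t_1}x_{t_2}\approx q^{-4}$. Your per-edge bound would need about $2q^{-3}$.
\item Here $\lambda_2\approx 2\cos(2\pi/7)$, and $\bm y$ is concentrated on $v,t_1,t_2$, with negative entries of order $q^{-1}$ on $K_q-v$.
\item Hence $f$ is maximized at $t_2$, where it is of order $q^{5/2}$, and minimized on $K_q-v$.
\item Since $t_2$ is a leaf, every path realizing the range of $f$ uses the edge $t_1t_2$. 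So $c_0/\sum_{e\in P}(x_ux_v)^{-1}=O(q^{-4})$ for any constant $c_0$.
\item This holds even for the sharp bound $M\ge 2$, which follows because $f$ has mean $0$ and variance $1$ under the weights $x_v^2$.
\item The target is about $\tfrac23 q^{-3}$, so the route fails.
\end{itemize}
In such graphs the Cauchy--Schwarz estimate is rescued only because $M$ is huge exactly when the Perron entries along $P$ are tiny. Your argument discards that information.

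\emph{What is missing.} Couple each difference $(f_i-f_j)^2$ with the Perron weights of its own endpoints, and average over all pairs instead of choosing one extremal pair.
\begin{itemize}
\item Along a shortest $i$--$j$ path $v_0v_1\cdots v_\ell$, the eigen-equation gives $x_{v_k}\ge x_i\lambda_1^{-k}$ and $x_{v_{k+1}}\ge x_j\lambda_1^{-(\ell-k-1)}$.
\item Hence every edge of that path satisfies $x_{v_k}x_{v_{k+1}}\ge x_ix_j\lambda_1^{-(\ell-1)}$. This is where the exponent $D-1$ comes from.
\item Your Cauchy--Schwarz step then yields $x_i^2x_j^2(f_i-f_j)^2\le D\lambda_1^{D-1}x_ix_j(\lambda_1-\lambda_2)$.
\item Sum over $i<j$. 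The left side is exactly $(\sum_i x_i^2)(\sum_i x_i^2f_i^2)-(\sum_i x_i^2f_i)^2=1$.
\item On the right, $\sum_{i<j}x_ix_j=(\|\bm x\|_1^2-1)/2\le (n-1)/2$.
\end{itemize}
The constants $2$ and $n-1$ come from this last inequality, not from a bound such as $M^2\ge 2$ or from excluding an endpoint.
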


\begin{proof}
Set, for ease of notation, $\lambda_1:=\lambda_1(G)$ and $\lambda_2:=\lambda_2(G)$.
Expanding the right-hand side of \eqref{eq:ground-state} and using
$A\bm{x}=\lambda_1\bm{x}$ and $A\bm{y}=\lambda_2\bm{y}$ gives
\begin{align}
\sum_{uv\in E(G)} x_ux_v (f_u - f_v)^2
& = \sum_{uv\in E(G)} x_ux_v (f_u^2 + f_v^2 - 2f_uf_v) \nonumber \\
& = \sum_{uv\in E(G)} (x_ux_vf_u^2 + x_ux_vf_v^2) - \bm{y}^{\top} A(G)\bm{y}. \label{eq:sum-f}
\end{align}
For the first term, collect all contributions involving a fixed vertex $v$:
\begin{align*}
\sum_{uv\in E(G)} (x_ux_vf_u^2 + x_ux_vf_v^2) 
& = \sum_{v\in V(G)} x_vf_v^2 \sum_{u\sim v} x_u = \sum_{v\in V(G)} x_vf_v^2 (\lambda_1 x_v) \\
& = \lambda_1\sum_{v\in V(G)} x_v^2 f_v^2 = \lambda_1.
\end{align*}
Together with \eqref{eq:sum-f} and $\lambda_2 = \bm{y}^{\top} A(G)\bm{y}$, we obtain \eqref{eq:ground-state}.

Next, we consider the lower bound on $\lambda_1(G) - \lambda_2(G)$. Fix vertices $i,j$, and let
$i=v_0,v_1,\ldots,v_\ell=j$ be a shortest path between $i$ and $j$. Repeated use of the
eigenvalue\,--\,eigenvector equations gives
$x_{v_k}\geq x_i/\lambda_1^k$ and $x_{v_{k}}\geq x_j/\lambda_1^{\ell-k}$ for $0\leq k\leq\ell$,
and hence $x_{v_k} x_{v_{k+1}} \geq x_ix_j/\lambda_1^{\ell-1}$.
Set $w_k:=x_{v_k}x_{v_{k+1}}>0$. The Cauchy--Schwarz inequality and \eqref{eq:ground-state} give
\begin{align*}
(f_i - f_j)^2 
& = \Bigg(\sum_{k=0}^{\ell-1} \frac{1}{\sqrt{w_k}} \sqrt{w_k} (f_{v_k} - f_{v_{k+1}})\Bigg)^2 \\
& \leq \Bigg(\sum_{k=0}^{\ell-1} \frac{1}{w_k}\Bigg) \Bigg(\sum_{k=0}^{\ell-1} w_k(f_{v_k} - f_{v_{k+1}})^2 \Bigg) \\
& \leq \frac{\ell \lambda_1^{\ell-1}}{x_ix_j} (\lambda_1 - \lambda_2).
\end{align*}
It follows that $x_i^2 x_j^2 (f_i - f_j)^2 \leq \ell (\lambda_1 - \lambda_2) \lambda_1^{\ell-1}x_ix_j
\leq D(\lambda_1 - \lambda_2)\lambda_1^{D-1}x_ix_j$. Hence, 
\[
\sum_{i<j} x_i^2x_j^2 (f_i - f_j)^2
\leq D(\lambda_1 - \lambda_2)\lambda_1^{D-1} \sum_{i<j} x_ix_j.
\]
Finally, we estimate both sides of the inequality: 
a lower bound for the left-hand side and an upper bound for the right-hand 
side. For the left-hand side, since $\langle\bm{x}, \bm{y}\rangle = 0$, we deduce
\[
\sum_{i<j} x_i^2x_j^2(f_i-f_j)^2 = \Big(\sum_i x_i^2\Big)
\Big(\sum_i x_i^2f_i^2\Big) - \Big(\sum_i x_i^2f_i\Big)^2 = 1,
\]
while for the right-hand side, the Cauchy--Schwarz inequality gives 
$\sum_{i<j} x_ix_j = (\|\bm{x}\|_1^2 - 1)/2 \leq (n-1)/2$.
Combining these bounds proves the desired inequality.
\end{proof}

The following lemma is known as the Poincar\'e 
Separation Theorem \cite[Corollary 4.3.37]{Horn-Johnson2012}.

\begin{lemma}[{\cite[Corollary 4.3.37]{Horn-Johnson2012}}]\label{lem:interlace-inequality}
Let $A$ be a symmetric $n\times n$ matrix with eigenvalues $\lambda_1\geq\cdots\geq\lambda_n$. 
Suppose that $1 \leq m \leq n$, and let $\bm{u}_1,\ldots,\bm{u}_m\in\mathbb{R}^n$ 
be orthonormal. Define $B = [\bm{u}_i^{\top} A\bm{u}_j]_{i,j=1}^m$ 
and let $B$ have eigenvalues $\mu_1\geq\cdots\geq\mu_m$. Then 
$\lambda_{i+n-m} \leq\mu_i\leq\lambda_i$, $i=1,2,\ldots,m$.
\end{lemma}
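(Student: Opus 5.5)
The statement just given is the Poincaré Separation Theorem, quoted from Horn–Johnson. I would prove it directly from the Courant–Fischer min-max characterization rather than simply cite it. Let $U=[\bm{u}_1,\ldots,\bm{u}_m]$ be the $n\times m$ matrix with orthonormal columns, so that $U^{\top}U=I_m$ and $B=U^{\top}AU$. The key observation is that $U$ is an isometric embedding of $\mathbb{R}^m$ into $\mathbb{R}^n$. Indeed, for every $\bm{z}\in\mathbb{R}^m$ we have $\|U\bm{z}\|=\|\bm{z}\|$ and $\bm{z}^{\top}B\bm{z}=(U\bm{z})^{\top}A(U\bm{z})$. Moreover, $U$ maps any $k$-dimensional subspace $S\subseteq\mathbb{R}^m$ onto a $k$-dimensional subspace $US\subseteq\mathbb{R}^n$. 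Hence Rayleigh quotients of $B$ on $S$ coincide with Rayleigh quotients of $A$ on $US$.

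\textbf{Upper bound.} By Courant–Fischer,
\[
\mu_i=\max_{\dim S=i}\ \min_{\bm{0}\neq\bm{z}\in S}\frac{\bm{z}^{\top}B\bm{z}}{\bm{z}^{\top}\bm{z}}.
\]
Every subspace $S$ in this maximum corresponds to the $i$-dimensional subspace $US$ of $\mathbb{R}^n$, and the inner minimum is unchanged. So $\mu_i$ is a maximum over a subfamily of the $i$-dimensional subspaces of $\mathbb{R}^n$, whence $\mu_i\le\lambda_i$.

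\textbf{Lower bound.} I would use the dual (min-max) form,
\[
\mu_i=\min_{\dim S=m-i+1}\ \max_{\bm{0}\neq\bm{z}\in S}\frac{\bm{z}^{\top}B\bm{z}}{\bm{z}^{\top}\bm{z}}.
\]
By the same embedding, $\mu_i$ is a minimum over a subfamily of the $(m-i+1)$-dimensional subspaces of $\mathbb{R}^n$. The full minimum over all $(m-i+1)$-dimensional subspaces of $\mathbb{R}^n$ equals $\lambda_j$ with $n-j+1=m-i+1$, that is, $j=i+n-m$. Restricting to a subfamily can only increase a minimum, so $\mu_i\ge\lambda_{i+n-m}$.

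Equivalently, one could apply the upper bound to $-A$ and $-B$, since the $k$-th smallest eigenvalue of a matrix is minus the $k$-th largest eigenvalue of its negative.

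The only point needing care is this index bookkeeping in the lower bound, specifically checking that $m-i+1$ corresponds to $i+n-m$. There is no real obstacle. Since the result is standard, in the paper the citation alone suffices.
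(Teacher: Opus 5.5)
Your proof is correct, but it takes a different route from the one the paper relies on. The paper gives no argument of its own and simply cites Horn--Johnson.

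There the result is obtained by a reduction rather than directly:
\begin{itemize}
\item complete $\bm{u}_1,\ldots,\bm{u}_m$ to an orthonormal basis $\bm{u}_1,\ldots,\bm{u}_n$ and put $W=[\bm{u}_1,\ldots,\bm{u}_n]$;
\item observe that $B$ is the leading $m\times m$ principal submatrix of $W^{\top}AW$, which has the same eigenvalues as $A$;
\item apply the Cauchy interlacing theorem for principal submatrices.
\end{itemize}

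You instead apply Courant--Fischer directly through the isometric embedding $\bm{z}\mapsto U\bm{z}$. This is essentially the argument that proves Cauchy interlacing in the first place. Your route is self-contained and needs no basis completion. The textbook route is shorter once interlacing for principal submatrices is available.

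Your index bookkeeping in the lower bound ($n-j+1=m-i+1$, giving $j=i+n-m$) is right. The alternative of applying the upper bound to $-A$ and $-B$ also works.
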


We will also employ the standard Hoffman--Smith subdivision lemma \cite{HoffmanSmith}. 
The exceptional affine Dynkin graph in the general statement has spectral radius $2$, and is therefore excluded by
the hypothesis below.

\begin{lemma}[{\cite[Theorem 8.1.12]{Cvetkovic-Rowlinson-Simic}}]\label{lem:HS-subdivision}
Let $H$ be a connected graph with $\lambda_1(H) > 2$, and let $e$ lie on an internal
path, that is, on a path whose endpoints have degree at least $3$ and whose
internal vertices have degree $2$. If $H'$ is obtained by subdividing
$e$ once, then $\lambda_1(H') < \lambda_1(H)$.
\end{lemma}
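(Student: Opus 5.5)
The plan is a Rayleigh-quotient comparison: from the Perron vector of $H'$ we build a test vector for $H$ whose quotient is at least $\lambda_1(H')$, and then use Perron--Frobenius to make the inequality strict. Let $P=w_0w_1\cdots w_{k+1}$ be the internal path of $H$ containing $e$, with $d(w_0),d(w_{k+1})\geq 3$. In $H'$ it becomes $P'=w_0w_1\cdots w_{k+2}$. Suppose for contradiction that $\lambda:=\lambda_1(H')\geq\lambda_1(H)>2$. Let $\bm{x}$ be the positive unit Perron vector of $H'$ and write $x_i:=x_{w_i}$. At each interior vertex of $P'$ the eigenvalue\,--\,eigenvector equation reads $x_{i-1}+x_{i+1}=\lambda x_i$. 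Since $\lambda>2$, the sequence $(x_i)$ is strictly convex, of the form $a\tau^i+b\tau^{-i}$ with $\tau+\tau^{-1}=\lambda$ and $\tau>1$.

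\textbf{Contraction step.} Pick an interior vertex $w_j$ ($1\leq j\leq k+1$) with $x_j\leq\min(x_{j-1},x_{j+1})$. Delete $w_j$ and join $w_{j-1}$ to $w_{j+1}$; the result is isomorphic to $H$. Let $\bm{y}$ be $\bm{x}$ restricted to the remaining vertices. Then
\[
\bm{y}^{\top}A(H)\bm{y}=\lambda-2x_j(x_{j-1}+x_{j+1})+2x_{j-1}x_{j+1}=\lambda-2\lambda x_j^2+2x_{j-1}x_{j+1},
\]
and $\|\bm{y}\|^2=1-x_j^2$. Hence the Rayleigh quotient of $\bm{y}$ is at least $\lambda$ exactly when $2x_{j-1}x_{j+1}\geq x_j(x_{j-1}+x_{j+1})$, that is, when $x_j$ is at most the harmonic mean of $x_{j-1}$ and $x_{j+1}$. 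This holds because $x_j\leq\min(x_{j-1},x_{j+1})$. So $\lambda_1(H)\geq\lambda$, and equality would force $\bm{y}$ to be a Perron vector of $H$. That is impossible: the eigenvalue equation of $H$ at $w_{j-1}$ would require $x_{j+1}=x_j$, contradicting strict convexity. Hence $\lambda_1(H)>\lambda$, the desired contradiction.

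\textbf{Main obstacle: locating $w_j$.} By convexity the minimum of $(x_i)$ over $0\leq i\leq k+2$ is attained at one point or at two adjacent points. If it lies at an interior index, we are done. The hard case is when $(x_i)$ is monotone along $P'$, say nondecreasing from $w_0$, so the minimum sits at the branch vertex $w_0$. Here the hypothesis $d(w_0),d(w_{k+1})\geq 3$ must be used, and this is where the exceptional Dynkin graph of the general Hoffman--Smith theorem enters. I would first try to rule out monotonicity with the eigenvalue equation at $w_{k+2}$. It has at least two neighbours off $P'$, and I would try to show, using the eigen-equations along the branches hanging off it, that its entry cannot exceed those of all its neighbours when $\lambda>2$.

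If that fails, I would switch to the standard characteristic-polynomial route. Let $\phi$ denote characteristic polynomials, and write $H'=H(k+1)$, where $H(m)$ is the graph with $m$ internal vertices on this path. Expanding $\phi(H(m))$ along the path gives a three-term recurrence in $m$, with the same characteristic roots $\tau^{\pm1}$ as the path recurrence. From this one gets $\phi(H(m))(z)=\alpha(z)\tau^m+\beta(z)\tau^{-m}$, and the largest root decreases in $m$ precisely when the coefficient $\alpha$ does not vanish above $2$. The degree hypotheses at both ends are what guarantee this. I expect the bulk of the work, and the reliance on $\lambda_1(H)>2$, to lie in this endpoint analysis.
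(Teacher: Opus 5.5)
The paper does not prove this lemma; it quotes Hoffman--Smith via \cite{Cvetkovic-Rowlinson-Simic}. So your argument has to stand on its own, and it has a genuine gap.

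Your contraction step is correct. One small repair: $x_{j+1}=x_j$ alone does not contradict strict convexity, but the eigen-equations of $H$ at both $w_{j-1}$ and $w_{j+1}$ give $x_{j-1}=x_j=x_{j+1}$, hence $\lambda=2$. The problem is that the step only handles the case where $\bm{x}$ has a local minimum at an interior vertex of $P'$. The monotone case you leave open is not a corner case; it is where the whole content of the theorem lies.

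Your first idea for it cannot work, because monotonicity is typical. Let $w_{k+1}$ lie in a clique $K_r$ with $r\ge 4$, and let $w_0$ carry two pendant leaves.
\begin{itemize}
\item In $H'$ the leaves have entry $x_0/\lambda$, so $x_1=(\lambda-2/\lambda)x_0>x_0$, and convexity gives $x_0<x_1<\cdots<x_{k+2}$.
\item The other clique vertices have entry $x_{k+2}/(\lambda-r+2)<x_{k+2}$, so $w_{k+2}$ exceeds all its neighbours. The statement you hoped to prove about $w_{k+2}$ is therefore false.
\item Here $x_i\propto\cosh((i+1)\theta)$ with $\lambda=2\cosh\theta>3$. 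Your weaker criterion $2x_{j-1}x_{j+1}\ge\lambda x_j^2$ reads $\cosh^2((j+1)\theta)\le 1+\cosh\theta$, and it fails at every interior $j$.
\end{itemize}
So no choice of vertex to contract works.

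The fallback is only a sketch, and neither of its concrete claims holds in general.
\begin{itemize}
\item When the internal path lies on a cycle, expanding $\phi(H(m))$ along it also produces a term independent of $m$, coming from the $w_0$--$w_{k+1}$ paths that avoid the internal path (compare $\phi(C_n)=p_n-p_{n-2}-2$). The recurrence is then inhomogeneous and the solution has the form $\alpha\tau^m+\beta\tau^{-m}+\gamma$.
\item Your criterion on $\alpha$ is false. For two copies of $K_4$ joined by a path, $\alpha$ vanishes at the largest eigenvalue of $K_4$ with an infinite ray attached (about $3.1$), which is $\lim_m\lambda_1(H(m))$. Yet $\lambda_1(H(m))$ decreases in $m$.
\end{itemize}

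What is missing is a quantitative use of the degree hypothesis. The (at least two) off-path neighbours of $w_0$ have entries at least $x_0/\lambda$, so $x_1\le(\lambda-2/\lambda)x_0$. Writing $x_i=a\tau^i+b\tau^{-i}$, this means $b\ge a\tau^{-2}$. With the analogous bound at $w_{k+2}$ it forces $a,b>0$, i.e.\ $x_{i-1}x_{i+1}-x_i^2=ab(\tau-\tau^{-1})^2>0$ along $P'$.

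If the path edges are bridges, this already finishes the proof with no case split. Delete $w_1$, join $w_0w_2$, and multiply $\bm{x}$ by $x_2/x_1$ on the component of $H'-w_0w_1$ containing $w_0$. The resulting $\bm{y}$ satisfies $\bm{y}^{\top}A(H)\bm{y}-\lambda\|\bm{y}\|^2=\frac{x_2}{x_1}(x_0x_2-x_1^2)>0$. This bridge case is the only one the paper uses, since in Lemma~\ref{lem:no-zero-branches} the edge $aw$ is a cut edge. When the internal path lies on a cycle no such rescaling is available, and your proposal offers no argument there.
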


\section{Eigenvalues of double-kite graphs}
\label{sec:double-kite}

In this section, we derive several eigenvalue properties of double-kite graphs. The results in this section essentially come from the arguments in \cite{Cioaba-Gregory} and \cite{Stanic2013}.
For $j\geq 0$, define the polynomial $p_j(x)$ recursively by
\begin{equation}\label{eq:path-polynomials}
p_0(x) = 1, \qquad p_1(x) = x, \qquad
p_j(x) = xp_{j-1}(x) - p_{j-2}(x) \quad (j\geq 2).
\end{equation}
For the path $P_j$ on $j$ vertices, expanding $\det(xI-A(P_j))$ along an endvertex gives
$p_j(x)=\phi(P_j,x)$, where $\phi(P_j, x)$ is the characteristic polynomial of $P_j$\footnote{Moreover, $p_j(x)=U_j(x/2)$, where $U_j$ is the Chebyshev polynomial of
the second kind.}. For $x > 2$, put $\sigma(x):=(x+\sqrt{x^2-4})/2$.

\subsection{A recursive polynomial}

The following lemma gives a closed form of $p_j(x)$.

\begin{lemma}
For every integer $j\geq0$ and every real $x>2$,
\begin{equation}\label{eq:path-hyperbolic}
p_j(x) = \frac{\sigma(x)^{j+1}-\sigma(x)^{-j-1}}{\sigma(x)-\sigma(x)^{-1}}.
\end{equation}
\end{lemma}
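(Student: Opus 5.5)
The plan is to prove the closed form by induction on $j$, using only the three-term recurrence \eqref{eq:path-polynomials}. Fix $x>2$ and write $\sigma=\sigma(x)$. The one structural fact needed is that $\sigma$ and $\sigma^{-1}$ are the two roots of the characteristic equation $t^2 - xt + 1 = 0$. Indeed, $\sigma$ is a root by the quadratic formula. The product of the roots is $1$, so the other root is $\sigma^{-1}$. Hence
\[
\sigma+\sigma^{-1}=x .
\]
Moreover, $x>2$ gives $\sqrt{x^2-4}>0$, so $\sigma>1>\sigma^{-1}$. Therefore the denominator $\sigma-\sigma^{-1}=\sqrt{x^2-4}$ is nonzero, and the right-hand side of \eqref{eq:path-hyperbolic} is well defined.

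Let $q_j$ denote the right-hand side of \eqref{eq:path-hyperbolic}.

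\textbf{Base cases.} For $j=0$ we get $q_0 = 1$. For $j=1$,
\[
q_1 = \frac{\sigma^2-\sigma^{-2}}{\sigma-\sigma^{-1}} = \sigma+\sigma^{-1} = x .
\]
These agree with $p_0$ and $p_1$.

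\textbf{Inductive step.} It suffices to show that $q_j$ satisfies the same recurrence, $q_j = xq_{j-1}-q_{j-2}$ for $j\ge 2$. Multiplying through by the common denominator, this reduces to the identity
\[
\sigma^{j+1} = x\sigma^{j} - \sigma^{j-1},
\]
together with the same identity with $\sigma$ replaced by $\sigma^{-1}$. Both hold because each side is $\sigma^{j-1}$ (respectively $\sigma^{-(j-1)}$) times $t^2-xt+1=0$ evaluated at the corresponding root. Since $p_j$ and $q_j$ then agree at $j=0,1$ and satisfy the same second-order linear recurrence, they agree for all $j\ge 0$.

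There is no real obstacle here. The only point needing care is checking that $\sigma^{-1}$ is indeed the second root, and that the denominator does not vanish; both follow from $x>2$.
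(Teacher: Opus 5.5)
Your proof is correct and essentially matches the paper's. Both rest on $\sigma(x)$ and $\sigma(x)^{-1}$ being the roots of $t^2-xt+1=0$, which are distinct for $x>2$; the paper writes $p_j=a\sigma^j+b\sigma^{-j}$ and solves for $a,b$ from $p_0,p_1$, whereas you check directly that the closed form satisfies the recurrence and the initial values, so you avoid quoting the general-solution theorem for linear recurrences.
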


\begin{proof}
By \eqref{eq:path-polynomials}, the two roots of the characteristic equation
$\xi^2-x\xi+1 = 0$ are $\sigma(x)$ and $\sigma(x)^{-1}$. Hence, $p_j(x)$ can be written as 
$p_j(x) = a\sigma(x)^j + b\sigma(x)^{-j}$ for some $a,b$. Using $p_0(x) = 1$ 
and $p_1(x) = x$, we obtain $a + b = 1$ and $a\sigma(x) + b\sigma(x)^{-1} = x$.
Solving it and note that $\sigma(x)+\sigma(x)^{-1}=x$, we have 
\[
a = \frac{\sigma(x)}{\sigma(x) - \sigma(x)^{-1}}, \qquad
b = -\frac{\sigma(x)^{-1}}{\sigma(x) - \sigma(x)^{-1}}.
\]
Substituting them into $p_j(x) = a\sigma(x)^j + b\sigma(x)^{-j}$ gives \eqref{eq:path-hyperbolic}.
\end{proof}

We use the following notation. 
Let $F$ be a graph. For $x > \lambda_1(F)$, let
$R_F(x) := (xI - A(F))^{-1}$.

\begin{lemma}\label{lem:R(F-x)}
Let $x>2$ and $v_1, v_s$ be the two leaf of the $s$-vertex path. Then 
\[
\bm{e}_{v_1}^{\top} R_{P_s} (x) \bm{e}_{v_1} = \frac{p_{s-1}(x)}{p_s(x)}, \qquad
\bm{e}_{v_1}^{\top} R_{P_s} (x) \bm{e}_{v_s} = \frac{1}{p_s(x)}.
\]
\end{lemma}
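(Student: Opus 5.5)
The plan is to use Cramer's rule (the adjugate formula) for the inverse of $xI - A(P_s)$, together with the identification $p_j(x)=\phi(P_j,x)=\det(xI-A(P_j))$ recorded after \eqref{eq:path-polynomials}. For $x>2$ we have $x>\lambda_1(P_s)=2\cos(\pi/(s+1))$, so $xI-A(P_s)$ is invertible and $R_{P_s}(x)$ is well defined. It is also convenient that $p_s(x)>0$ there, which follows from \eqref{eq:path-hyperbolic} since $\sigma(x)>1$. Label the path $v_1v_2\cdots v_s$ and write $M=xI-A(P_s)$, so that $\det M=p_s(x)$.

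For the diagonal entry, Cramer's rule gives $(M^{-1})_{v_1v_1}=\det M_{11}/\det M$, where $M_{11}$ is obtained by deleting row and column $v_1$. Deleting a leaf of $P_s$ leaves the path $v_2\cdots v_s$, which is $P_{s-1}$, so $M_{11}=xI-A(P_{s-1})$ and its determinant is $p_{s-1}(x)$. This gives $p_{s-1}(x)/p_s(x)$. When $s=1$ the formula reads $1/x=p_0/p_1$, which is consistent with the convention that the empty determinant equals $1$.

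For the off-diagonal entry, $(M^{-1})_{v_1v_s}=(-1)^{1+s}\det M^{(s,1)}/\det M$, where $M^{(s,1)}$ is obtained by deleting row $v_s$ and column $v_1$. Since $M$ is tridiagonal with diagonal entries $x$ and off-diagonal entries $-1$, the resulting $(s-1)\times(s-1)$ matrix is lower triangular, and its diagonal consists of the superdiagonal entries $-1$ of $M$. Its determinant is therefore $(-1)^{s-1}$. The sign factors then combine as $(-1)^{1+s}(-1)^{s-1}=1$, which yields $1/p_s(x)$. For $s=1$ this agrees with the diagonal case, since then $v_1=v_s$.

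As an alternative check, one can verify directly that the vector $z_j=p_{s-j}(x)/p_s(x)$ satisfies $Mz=\bm{e}_{v_1}$, using the recurrence \eqref{eq:path-polynomials} together with $p_{-1}:=0$. This also recovers both entries, because $z_1=p_{s-1}/p_s$, $z_s=p_0/p_s=1/p_s$, and the matrix is symmetric. No step is a real obstacle here; the only care needed is tracking the cofactor sign and the degenerate cases $s=1,2$.
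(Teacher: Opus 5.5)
Your proposal is correct and follows the paper's proof essentially verbatim: both apply Cramer's rule to $M=xI-A(P_s)$, identify the $(1,1)$ cofactor with $\det(xI-A(P_{s-1}))=p_{s-1}(x)$, and evaluate the $(s,1)$ cofactor as $(-1)^{s+1}\cdot(-1)^{s-1}=1$. Your observations that the relevant minor is lower triangular with diagonal entries $-1$, and your alternative check that $z_j=p_{s-j}(x)/p_s(x)$ solves $Mz=\bm{e}_{v_1}$, simply make explicit details that the paper leaves implicit.
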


\begin{proof}
Consider the spectral decomposition $A(P_s)
= U\mathrm{diag}(\lambda_1,\ldots,\lambda_s)U^\top$, where
$U$ is an orthogonal matrix, and $\lambda_i$ are the eigenvalues of $P_s$, $i=1,2,\ldots,s$. Hence
\[
R_{P_s} (x) = U\mathrm{diag}
\Big(\frac1{x - \lambda_1},\ldots, \frac{1}{x - \lambda_s}\Big)U^\top.
\]
For an invertible matrix $M$, Cramer's rule gives
$(M^{-1})_{ij} = \frac{\mathrm{cof}_{ji}(M)}{\det M}$,
where $\mathrm{cof}_{ji}(M)$ denotes the $(j,i)$-cofactor. Take
$M = xI - A(P_s)$. For the \((1,1)\)-entry of $(xI - A(P_s))^{-1}$,
\[
\bm{e}_{v_1}^{\top} R_{P_s} (x) \bm{e}_{v_1} = (R_{P_s} (x))_{11}
= \frac{\mathrm{cof}_{11}(M)}{\det M}.
\]
Deleting the first row and first column of $M$ leaves
$xI - A(P_{s-1})$. Therefore
$\mathrm{cof}_{11}(M) = \det(xI-A(P_{s-1})) = p_{s-1}(x)$.
Since $\det M = p_s(x)$, we obtain $\bm{e}_{v_1}^{\top} R_{P_s} (x) \bm{e}_{v_1} = 
p_{s-1}(x)/p_s(x)$.

Again by Cramer's rule, we have
\[
\bm{e}_{v_1}^{\top} R_{P_s} (x) \bm{e}_{v_s} = (R_{P_s} (x))_{1s}
= \frac{\mathrm{cof}_{s1}(M)}{\det M}.
\]
The cofactor is $\mathrm{cof}_{s1}(M) =
(-1)^{s+1}\det M_{s1} = 1$, where $M_{s1}$ is obtained from $M$ by deleting row $s$ and column $1$, and $\det M_{s1}=(-1)^{s-1}$.
It follows that
$\bm{e}_{v_1}^{\top} R_{P_s} (x) \bm{e}_{v_s}  = 1/p_s(x)$.
\end{proof}

\subsection{The gap of double-kite graphs}

In the following lemma, we determine the equation that the first two largest eigenvalues of 
$DK(r,s)$ must satisfy. Let
\begin{equation}\label{eq:clique-response}
q_r(x) := x-\frac{r-1}{x-r+2}.
\end{equation}

\begin{lemma}\label{lem:DK-equations}
Let $r\geq 5$ and $s\geq 2$. The largest and second-largest eigenvalues of $DK(r,s)$
are the unique roots in $(r-1, \infty)$ of the following equations, respectively:
\begin{equation}\label{eq:DK-plus-minus}
q_r(x) = \frac{p_{s-1}(x)+1}{p_s(x)}, \qquad
q_r(x) = \frac{p_{s-1}(x)-1}{p_s(x)}.
\end{equation}
\end{lemma}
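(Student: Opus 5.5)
We will exploit the reflection symmetry $\tau$ of $DK(r,s)$ that swaps the two cliques and reverses the path. Every eigenspace of $A(DK(r,s))$ splits into $\tau$-symmetric and $\tau$-antisymmetric vectors, so it suffices to find the eigenvalues in $(r-1,\infty)$ carried by each class.

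Fix an eigenvector $\bm z$ for an eigenvalue $\lambda>r-1$. Let $u$ be an attachment vertex and $w$ a non-attachment vertex in the same clique. All $r-1$ non-attachment vertices of that clique are twins, so after averaging over clique automorphisms we may assume they share a common entry $z_w$ (or else $\bm z$ is supported on those twins, which gives eigenvalue $-1$, irrelevant here). The equation at $w$ reads $\lambda z_w=(r-2)z_w+z_u$, so $z_w=z_u/(\lambda-r+2)$.

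Now let $u$ have path neighbor $v_1$, where $v_1,\dots,v_s$ are the internal path vertices and $u'$ is the other attachment vertex. The equation at $u$ is
\[
\lambda z_u=(r-1)z_w+z_{v_1}, \qquad\text{that is,}\qquad q_r(\lambda)z_u=z_{v_1}.
\]
On the path, $(\lambda I-A(P_s))\bm z|_{P}=z_u\bm e_{v_1}+z_{u'}\bm e_{v_s}$. For $\lambda>2$ the matrix $\lambda I-A(P_s)$ is invertible, so Lemma~\ref{lem:R(F-x)} gives
\[
z_{v_1}=z_u\frac{p_{s-1}(\lambda)}{p_s(\lambda)}+z_{u'}\frac{1}{p_s(\lambda)}.
\]
In the symmetric class $z_{u'}=z_u$, and in the antisymmetric class $z_{u'}=-z_u$. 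We have $z_u\neq0$, since otherwise the whole vector vanishes. Hence $\lambda$ satisfies the first equation in \eqref{eq:DK-plus-minus} (symmetric) or the second (antisymmetric). Conversely, any root $\lambda>r-1$ of either equation yields an eigenvector: set $z_u=1$, $z_{u'}=\pm1$, fill in the path by the resolvent, and fill in the cliques by $z_w$.

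It remains to show that each equation has exactly one root in $(r-1,\infty)$, and that these roots are $\lambda_1$ and $\lambda_2$.

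\emph{Existence and uniqueness.} Set $g_\pm(x)=q_r(x)-(p_{s-1}(x)\pm1)/p_s(x)$.
\begin{itemize}
\item On $(r-1,\infty)$ the function $q_r$ is strictly increasing, since its derivative is $1+(r-1)/(x-r+2)^2>0$.
\item Both $p_{s-1}/p_s$ and $1/p_s$ are diagonal or off-diagonal resolvent entries. They are positive and decreasing for $x>2$: each is a sum $\sum_k c_k/(x-\mu_k)$, with $c_k\ge0$ for the diagonal entry, and $1/p_s$ is decreasing because $p_s$ is increasing for $x>2$.
\item The hyperbolic form \eqref{eq:path-hyperbolic} gives $p_{s-1}\pm1\ge p_{s-1}-1\geq0$ for $x>2$, $s\ge2$. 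So the right-hand sides are nonnegative and nonincreasing; for $(p_{s-1}-1)/p_s$ this is the step that needs checking, via the resolvent entry $(R)_{11}-(R)_{1s}=\sum_k z_k(1)(z_k(1)-z_k(s))/(x-\mu_k)$ paired with the symmetric/antisymmetric path eigenvectors.
\end{itemize}
Thus $g_\pm$ is strictly increasing on $(r-1,\infty)$ and tends to $+\infty$. At $x=r-1$ we have $q_r(r-1)=0$, so $g_\pm(r-1)<0$ provided the right-hand side is positive at $r-1$. For the minus sign this positivity uses $p_{s-1}(r-1)>1$, which holds since $r\geq5$ and $s\ge2$. Each equation therefore has a unique root, $\theta_+$ and $\theta_-$ respectively. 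Since $(p_{s-1}+1)/p_s>(p_{s-1}-1)/p_s$, monotonicity gives $\theta_+>\theta_-$.

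\emph{Identification.} The largest eigenvalue $\lambda_1$ has a positive eigenvector, which is symmetric, and $\lambda_1>r-1$ because $K_r$ is a proper subgraph. So $\lambda_1=\theta_+$. For $\lambda_2$, consider the two clique subgraphs.
\begin{itemize}
\item Deleting the path vertices $v_1$ (or any internal vertex) leaves two disjoint graphs each containing $K_r$, so Cauchy interlacing gives $\lambda_2\ge r-1$.
\item Strict inequality $\lambda_2>r-1$ follows by exhibiting the antisymmetric root $\theta_-$ itself, which we have shown exists.
\end{itemize}
There are no other eigenvalues above $r-1$: each one produces a root of one of the two equations, the eigenspace for each root is one-dimensional given $z_u$, and $r-1$ itself is excluded. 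Hence $\lambda_2=\theta_-$.

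The main obstacle is the monotonicity and sign analysis of $(p_{s-1}-1)/p_s$ on $(r-1,\infty)$. I would handle it directly from \eqref{eq:path-hyperbolic} with $\sigma=\sigma(x)$, writing $(p_{s-1}-1)/p_s$ in terms of $\sigma$ and checking the sign of its derivative in $\sigma$. This is where the hypothesis $r\ge5$ enters.
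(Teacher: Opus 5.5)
Your argument is correct, and its core matches the paper's. Both proofs reduce to the reflection-symmetric and antisymmetric classes, eliminate the clique via $z_w=z_u/(\lambda-r+2)$, and get monotonicity from the path resolvent. Your formula $z_{v_1}=z_u\,p_{s-1}/p_s+z_{u'}/p_s$ is just the paper's transfer identity $z_{s+1}=p_sz_1-p_{s-1}z_0$ solved for $z_1$.

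The ``main obstacle'' you flag at the end is already resolved by the expansion you sketched. Indeed, $R_{11}-R_{1s}=\sum_{k\text{ antisym}}2z_k(1)^2/(x-\mu_k)$ is exactly the paper's $h_-(x)=\tfrac12(\bm e_1-\bm e_s)^\top R_s(x)(\bm e_1-\bm e_s)$. This is positive with derivative $-\tfrac12\|R_s(x)(\bm e_1-\bm e_s)\|^2<0$ for every $x>2$. So no $\sigma$-computation is needed, and $r\ge5$ plays no role there. Nor does it matter for $p_{s-1}(r-1)>1$, which holds for all $x>2$ once $s\ge2$.

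Where you genuinely differ is the identification step. The paper deletes both attachment vertices and uses interlacing, $\lambda_3\le\lambda_1(K_{r-1}\sqcup P_s\sqcup K_{r-1})=r-2$; this is where its hypothesis on $r$ is actually used. It also leaves implicit that the two roots really are eigenvalues. You instead prove that converse explicitly, identify $\lambda_1=\theta_+$ through the symmetric Perron vector, and show that every eigenvalue above $r-1$ is $\theta_+$ or $\theta_-$. This is more self-contained and does not need interlacing at all; your remark that $\lambda_2\ge r-1$ by deleting a path vertex is redundant.

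Two small tidy-ups:
\begin{itemize}
\item Simplicity of $\theta_+=\lambda_1$ already follows from Perron--Frobenius, and that is all you need to conclude $\lambda_2=\theta_-$.
\item If you keep the one-dimensionality claim, derive equality of the twin entries for every eigenvector directly from $(\lambda+1)(z_w-z_{w'})=0$. Averaging over clique automorphisms only produces some eigenvector with equal twin entries.
\end{itemize}
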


\begin{proof}
Let $u_0,u_1,\ldots,u_s,u_{s+1}$ be the vertices along the path of $DK(r,s)$ 
between the two attachment vertices $u_0$ and $u_{s+1}$. Now consider an eigenvalue 
$\lambda > r-1$ of $DK(r,s)$ with a corresponding eigenvector $\bm{z}$, 
and let $z_i$ be the coordinate of $\bm{z}$ at $u_i$. From the eigenvalue\,--\,eigenvector equation for $\lambda$, and since $\lambda_1$ and $\lambda_2$ are simple,
any two non-attachment vertices in the same clique of $DK(r,s)$ have the same coordinate; 
we denote this common coordinate by $b_0$.
Moreover, $\lambda\, b_0 = z_0 + (r-2)b_0$, which rearranges to $b_0 = z_0/(\lambda - r + 2)$.
The eigenvalue\,--\,eigenvector equation at $u_0$ is $\lambda\, z_0 = (r-1)b_0 + z_1$.
Combining this yields
\begin{equation}\label{eq:DK-left-boundary}
z_1 = \Big(\lambda - \frac{r-1}{\lambda - r + 2}\Big) z_0 = q_r(\lambda) z_0.
\end{equation}
Moreover, one necessarily has $z_0\neq 0$. Likewise, we have $z_s = q_r(\lambda) z_{s+1}$.

For $1\leq i\leq s$, the eigenvalue\,--\,eigenvector equation at $u_i$ is
$z_{i+1} = \lambda z_i - z_{i-1}$. We claim more generally that by induction
$z_{k+1} = p_k(\lambda) z_1 - p_{k-1}(\lambda) z_0$ for $1\leq k\leq s$.
For $k=1$, this is clear. If the claim holds for $k-1$ and $k$, then
$z_{k+2} = \lambda z_{k+1} - z_k = p_{k+1}(\lambda) z_1 - p_k(\lambda) z_0$
by the recurrence \eqref{eq:path-polynomials}. This proves the claim. In particular,
\begin{equation}\label{eq:path-transfer-DK}
z_{s+1} = p_s(\lambda) z_1 - p_{s-1}(\lambda) z_0.
\end{equation}

Let $J$ be the permutation matrix representing the reflection that exchanges 
the two cliques and reverses the joining path. Then $J^{\top} = J$ and $J^2 = I$.
Since the reflection is a graph automorphism, $J^{\top} AJ = A$, which is equivalent to 
$AJ = JA$. As $J^2 = I$, its eigenvalues belong to $\{1, -1\}$, and 
$\mathbb{R}^n = \ker (J - I)\oplus\ker (J + I)$. Since $AJ = JA$, 
both subspaces $\ker (J - I)$ and $\ker (J + I)$ are invariant under $A$, 
which allows $A$ to be diagonalized separately on them.  We may therefore
choose the eigenvector $\bm z$ for $\lambda$ in one of these two subspaces.
Then $J\bm{z} = \varepsilon \bm{z}$, where
$\varepsilon\in\{1,-1\}$.
Since reflection reverses this path, $J \bm{e}_{u_0} = \bm{e}_{u_{s+1}}$.
Hence, 
\[
z_{s+1} = \bm{e}_{u_{s+1}}^{\top} \bm{z} = (J \bm{e}_{u_0})^{\top} \bm{z} 
= \bm{e}_{u_0}^{\top} J^{\top} \bm{z} = \bm{e}_{u_0}^{\top} J \bm{z} 
= \bm{e}_{u_0}^{\top} (\varepsilon \bm{z}) = \varepsilon z_0.
\]
Then $z_{s+1} = \varepsilon z_0$. Since $z_0\neq 0$, we may therefore divide \eqref{eq:path-transfer-DK} by $z_0$. Together with \eqref{eq:DK-left-boundary}, we obtain
$\varepsilon = p_s(\lambda) q_r(\lambda) - p_{s-1}(\lambda)$, or equivalently
$q_r(\lambda) = (p_{s-1}(\lambda) + \varepsilon)/p_s(\lambda)$.
The choices $\varepsilon=1$ and $\varepsilon=-1$ imply that $\lambda$ 
is a roots of \eqref{eq:DK-plus-minus}.
\smallskip

Set $R_s(x) := (xI - A(P_s))^{-1}$, $x>2$ for short. By Lemma \ref{lem:R(F-x)}, we see
\[
(R_s(x))_{11}=\frac{p_{s-1}(x)}{p_s(x)}, \qquad
(R_s(x))_{1s}=\frac1{p_s(x)}.
\]
Reflection symmetry gives
$(R_s(x))_{ss} = (R_s(x))_{11}$ and $(R_s(x))_{s1} = (R_s(x))_{1s}$. Therefore, with $h_\pm(x) := \frac{p_{s-1}(x)\pm 1}{p_s(x)}$, we have
\begin{equation}\label{eq:h-resolvent-form}
h_\pm(x) = \frac{1}{2} (\bm{e}_1\pm \bm{e}_s)^{\top} R_s(x) (\bm{e}_1\pm \bm{e}_s) > 0.
\end{equation}
Differentiating the inverse matrix gives $R_s'(x) = -R_s(x)^2$. Consequently,
\begin{equation}\label{eq:h-strictly-decreasing}
h_\pm'(x) = -\frac{1}{2} \|R_s(x) (\bm{e}_1\pm \bm{e}_s)\|^2 < 0.
\end{equation}
On the other hand,
\begin{equation}\label{eq:q-strictly-increasing}
q_r'(x) = 1 + \frac{r-1}{(x-r+2)^2} > 0.
\end{equation}
At $x=r-1$, one has $q_r(r-1)=0$, whereas
$h_\pm(r-1)>0$ by \eqref{eq:h-resolvent-form}. As
$x\to\infty$, one has $q_r(x)\to\infty$ and $h_\pm(x)\to0$.
Thus each of the equations $q_r(x) = h_+(x)$ and $q_r(x) = h_-(x)$ has a root in
$(r-1,\infty)$. Moreover, \eqref{eq:h-strictly-decreasing} and
\eqref{eq:q-strictly-increasing} show that each root is unique.

Finally, we show that these are the two largest eigenvalues.
Delete $u_0$ and $u_{s+1}$ from $DK(r,s)$. The remaining induced
subgraph is $K_{r-1}\sqcup P_s\sqcup K_{r-1}$, whose
spectral radius is $\max\{\lambda_1(K_{r-1}), \lambda_1(P_s)\} = r-2$,
since $r\geq 5$ and $\lambda_1(P_s) < 2$. By Cauchy interlacing, deleting two vertices gives $\lambda_3 (DK(r,s))\leq r-2$. The two roots obtained above both exceed 
$r-1$, and hence also exceed $\lambda_3(DK(r,s))$. Therefore they are the two largest adjacency eigenvalues.
\end{proof}

\begin{lemma}\label{lem:DK-gap-upper}
Let $r\geq 5$ and $s\geq 2$. Then $\gap(DK(r,s))
\leq \frac{4}{r\,p_s(r-1)}\leq \frac{4}{r}\,\sigma(r-1)^{-s}$.
\end{lemma}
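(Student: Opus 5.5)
Write $\lambda_1>\lambda_2$ for the two roots from Lemma~\ref{lem:DK-equations}, so $\lambda_1,\lambda_2\in(r-1,\infty)$. Set $h_\pm(x)=(p_{s-1}(x)\pm1)/p_s(x)$. Then $\lambda_1$ solves $q_r=h_+$ and $\lambda_2$ solves $q_r=h_-$, and $h_+-h_-=2/p_s$. The plan is to compare the increasing function $q_r$ with the decreasing functions $h_\pm$ on the interval $[\lambda_2,\lambda_1]$.

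\textbf{Mean value step.} By \eqref{eq:q-strictly-increasing}, $q_r'(x)\ge 1$. Hence
\[
q_r(\lambda_1)-q_r(\lambda_2)\ge \lambda_1-\lambda_2 .
\]
Since $h_+$ is decreasing by \eqref{eq:h-strictly-decreasing} and $\lambda_2<\lambda_1$,
\[
q_r(\lambda_1)-q_r(\lambda_2)=h_+(\lambda_1)-h_-(\lambda_2)\le h_+(\lambda_2)-h_-(\lambda_2)=\frac{2}{p_s(\lambda_2)}.
\]
Because $p_s$ is increasing on $(2,\infty)$ (visible from \eqref{eq:path-hyperbolic}) and $\lambda_2>r-1$, this gives $\gap\le 2/p_s(r-1)$.

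\textbf{Recovering the factor $1/r$.} The stated bound $4/(r\,p_s(r-1))$ needs a sharper lower bound on the derivative. Near the root, $x-r+2$ is about $1$, so $(r-1)/(x-r+2)^2$ is about $r$. To use this, first show $\lambda_2<r$. At $x=r$ we have $q_r(r)=r-(r-1)/2=(r+1)/2$, while $h_-(r)\le p_{s-1}(r)/p_s(r)<1$, since $p_s=xp_{s-1}-p_{s-2}$ implies $p_{s-1}/p_s<1/(x-1)$ for $x>2$. So $q_r(r)>h_-(r)$, and monotonicity puts $\lambda_2<r$. Likewise $q_r(r)>h_+(r)$, since $h_+(r)\le (p_{s-1}(r)+1)/p_s(r)\le 1$, so $\lambda_1<r$ as well. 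On $(r-1,r)$ we have $x-r+2\in(1,2)$, so $q_r'(x)\ge 1+(r-1)/4\ge r/4$. Rerunning the mean value step with this bound gives $\lambda_1-\lambda_2\le (4/r)\cdot 2/p_s(r-1)$, which is $8/(r\,p_s)$ rather than $4/(r\,p_s)$. This factor of $2$ is the main obstacle.

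\textbf{Removing the factor $2$.} First try to confine both roots to $(r-1,r-1/2]$, or some similar short interval near $r-1$. There $x-r+2\le 3/2$, so $q_r'\ge 1+4(r-1)/9$, which is at least $r/2$ once $r\ge 5$. This needs $q_r(r-\tfrac12)>1\ge h_\pm$. Since
\[
q_r(r-\tfrac12)=r-\tfrac12-\tfrac{2(r-1)}{3}=\tfrac{r+1}{3}-\tfrac12\ge \tfrac32\quad (r\ge5),
\]
this holds. Then
\[
\lambda_1-\lambda_2\le \frac{2}{r}\cdot\frac{2}{p_s(r-1)}=\frac{4}{r\,p_s(r-1)}.
\]

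\textbf{Second inequality.} From \eqref{eq:path-hyperbolic} with $\sigma=\sigma(r-1)$, use $p_s\ge(\sigma^{s+1}-\sigma^{-s-1})/\sigma$. Since $\sigma^{-s-1}$ is tiny for $r\ge5$, I expect $p_s(r-1)\ge\sigma^{s}$. This should follow from $\sigma^{s+1}-\sigma^{-s-1}\ge \sigma^{s+1}-\sigma^{s-1}=\sigma^{s}(\sigma-\sigma^{-1})$, which holds when $\sigma^{-s-1}\le\sigma^{s-1}$, i.e. always. Hence $1/p_s(r-1)\le\sigma(r-1)^{-s}$, and the second inequality follows.
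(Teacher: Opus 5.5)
Your strategy is the paper's: confine both roots near $r-1$, then apply the mean value theorem to $q_r$ with $q_r'\ge r/2$, and bound $h_+(\lambda_1)-h_-(\lambda_2)\le 2/p_s(\lambda_2)\le 2/p_s(r-1)$. However, the step meant to remove the factor $2$ fails as written. On $x-r+2\le \tfrac32$ you only get
$q_r'(x)\ge 1+\frac{4(r-1)}{9}=\frac{4r+5}{9}$,
and $\frac{4r+5}{9}\ge \frac r2$ holds only for $r\le 10$. For $r\ge 11$ your derivative bound is only about $\frac{4r}{9}$. That is also the regime $r\to\infty$ in which the lemma is actually used. With it, the mean value step yields $\lambda_1-\lambda_2\le \frac{18}{(4r+5)\,p_s(r-1)}$, not the claimed $\frac{4}{r\,p_s(r-1)}$. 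The window $(r-1,r-\tfrac12]$ is simply too wide: writing $t=x-r+2$, the inequality $1+\frac{r-1}{t^2}\ge \frac r2$ for all $r\ge 5$ needs roughly $t\le\sqrt2$.

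The repair stays within your method. Use the test point $r-\tfrac23$ instead. There $q_r(r-\tfrac23)=\frac r4+\frac1{12}\ge\frac43>1>h_\pm(r-\tfrac23)$, so both roots satisfy $1<x-r+2<\frac43$. This gives $q_r'\ge 1+\frac{9(r-1)}{16}\ge \frac r2$ for every $r$. The paper localizes even more sharply: it writes a root as $r-1+a$, uses $q_r(r-1+a)=\frac{a(r+a)}{1+a}\le h_+(r-1)\le \frac{2}{r-2}$, and obtains $a\le\frac{4}{r(r-2)}<\frac13$, which leads to the same derivative bound.

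For the second inequality, drop the intermediate weakening of the denominator to $\sigma$; on its own that only gives $p_s\ge \sigma^s(1-\sigma^{-2})$. Instead, divide your inequality $\sigma^{s+1}-\sigma^{-s-1}\ge \sigma^{s}(\sigma-\sigma^{-1})$ by the exact denominator $\sigma-\sigma^{-1}$ in \eqref{eq:path-hyperbolic}. That gives $p_s(r-1)\ge\sigma(r-1)^s$ directly, as in the paper.
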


\begin{proof}
We first record two elementary estimates for the path polynomials. For
$x\geq 3$, put $\rho_j(x):=\frac{p_{j-1}(x)}{p_j(x)}$.
Since $\rho_1=1/x\in(0,1)$, the recurrence gives inductively
\[
\rho_j(x) = \frac1{x-\rho_{j-1}(x)} \quad\text{and hence}\quad
0 < \rho_j(x)\leq\frac{1}{x-1} < 1.
\]
It follows in particular that $p_j(x)>p_{j-1}(x)>0$ for $j\geq 2$. Therefore, for $s\geq 2$ we have
\begin{equation}\label{eq:path-ratio-elementary-bounds}
\frac{p_{s-1}(x)}{p_s(x)}\leq\frac{1}{x-1}, \qquad
\frac{1}{p_s(x)}\leq\frac{1}{x}.
\end{equation}
At $x=r-1$, these inequalities give $h_+(r-1) \leq 2/(r-2)$.

Write either of the two roots of \eqref{eq:DK-plus-minus} as $x=r-1+a$, where $a>0$.  Since both
$h_+$ and $h_-$ are decreasing and $h_-\leq h_+$, Lemma \ref{lem:DK-equations} implies
\begin{equation}\label{eq:DK-root-shift-initial}
 \frac{a(r+a)}{1+a}
 =q_r(r-1+a)
 \leq h_+(r-1)
 \leq\frac2{r-2}.
\end{equation}
If $a\geq1$, then the left-hand side is at least $r/2$, whereas the
right-hand side is at most $2/3$; this is impossible for $r\geq5$.
Thus $a<1$.  Using $r+a\geq r$ and $1+a<2$ in
\eqref{eq:DK-root-shift-initial}, we obtain the sharper estimate
$0 < a\leq\frac4{r(r-2)} < \frac13$.
Consequently, every $x$ between $\lambda_1:=\lambda_1(DK(r,s))$ and $\lambda_2:=\lambda_2(DK(r,s))$ satisfies
$x-r+2<4/3$. By \eqref{eq:q-strictly-increasing}, $q_r'(x)\geq 1 + 9(r-1)/16 \geq r/2$.
The mean-value theorem and the fact that $h_+$ is decreasing now give
\begin{align*}
\frac{r}{2} (\lambda_1 - \lambda_2)
& \leq q_r(\lambda_1) - q_r(\lambda_2) = h_+(\lambda_1) - h_-(\lambda_2) \\
& \leq h_+(\lambda_2) - h_-(\lambda_2) = \frac{2}{p_s(\lambda_2)}.
\end{align*}
By \eqref{eq:path-hyperbolic}, $p_s(x)$ is positive and strictly
increasing on $(2,\infty)$. Since $\lambda_2 > r - 1$, it follows that
$p_s(\lambda_2) \geq p_s(r-1)$. We have therefore proved
$\lambda_1 - \lambda_2\leq\frac4{r\,p_s(r-1)}$.

Finally, \eqref{eq:path-hyperbolic} can be rewritten as
\begin{equation}\label{eq:path-hyperbolic-factorized}
 p_s(x)
 =\sigma(x)^s
   \frac{1-\sigma(x)^{-2s-2}}{1-\sigma(x)^{-2}}.
\end{equation}
Since $\sigma(x)>1$ and $s\geq0$, the quotient on the right is at
least $1$. Thus $p_s(x)\geq\sigma(x)^s$, which in fact gives the desired estimation.
\end{proof}

\section{The properties of extremal graphs}
\label{sec:extremal-graphs}

For the remainder of the paper, we always assume $G$ is a graph attaining the minimum spectral gap
among all $n$-vertex connected graphs, and let $\gamma_n := \gap(G)$. 
We tacitly assume that $n$ is sufficiently large. Let $\bm{x}$ be the positive unit eigenvector 
corresponding to $\lambda_1(G)$, and $\bm{y}$ be a unit eigenvector of $G$ corresponding to
$\lambda_2(G)$. For short, we set $\lambda_1:=\lambda_1(G)$, $\lambda_2:=\lambda_2(G)$.

\begin{lemma}\label{lem:magnitudes-order}
$\gamma_n\leq\exp\{-n\log n+n\log\log n+O(n)\}$.
In particular, $\gamma_n = o(n^{-K})$ for any fixed $K > 0$.
\end{lemma}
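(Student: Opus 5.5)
Since $G$ minimizes the spectral gap, $\gamma_n\leq\gap(DK(r,s))$ for every double kite on $n$ vertices, i.e.\ whenever $2r+s=n$. The plan is to pick $r$ and $s$ and apply Lemma~\ref{lem:DK-gap-upper}. We take
\[
r:=\Big\lfloor\frac{n}{2\log n}\Big\rfloor, \qquad s:=n-2r.
\]
For large $n$ this gives $r\geq5$ and $s\geq2$. Lemma~\ref{lem:DK-gap-upper} then yields
\[
\gamma_n\leq\frac4r\,\sigma(r-1)^{-s}.
\]
Since $4/r\leq1$, it suffices to show that $s\log\sigma(r-1)\geq n\log n-n\log\log n-O(n)$.

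For $x\geq3$ we have $\sigma(x)\geq x-1$, because $\sigma(x)+\sigma(x)^{-1}=x$ and $\sigma(x)^{-1}\leq1$. Hence
\[
\log\sigma(r-1)\geq\log(r-2)=\log r+O(1/r).
\]
Next, $\log r=\log n-\log\log n-\log 2+o(1)$. Also $s=n-2r\geq n-n/\log n$.

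Multiplying these estimates gives
\[
s\log\sigma(r-1)\geq\Big(n-\frac{n}{\log n}\Big)\big(\log n-\log\log n-O(1)\big).
\]
The right-hand side equals $n\log n-n\log\log n-O(n)$. The cross term $\frac{n}{\log n}\cdot\log n=n$ is only $O(n)$, and the other cross terms are smaller. This is exactly why $r$ is chosen of order $n/\log n$: it balances the loss $2r\log r$ from the cliques against the gain from the base $\sigma(r-1)$. Exponentiating gives $\gamma_n\leq\exp\{-n\log n+n\log\log n+O(n)\}$.

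The second statement follows directly. For fixed $K$, the exponent $-n\log n+n\log\log n+O(n)$ is eventually less than $-(K+1)\log n$, so $\gamma_n=o(n^{-K})$.

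The only place that needs care is the bookkeeping of error terms. Rounding $r$ to an integer changes $\log r$ by $O(1/r)$, and $\sigma(r-1)$ differs from $r-1$ by a factor $1+O(r^{-2})$. Each of these contributes at most $O(n/r)=O(\log n)$ to the exponent, which is absorbed into $O(n)$. There is no substantive obstacle.
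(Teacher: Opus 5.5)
Your proposal is correct and follows the paper's proof: you make the same choice $r=\lfloor n/(2\log n)\rfloor$, $s=n-2r$, and feed it into Lemma~\ref{lem:DK-gap-upper}. The only difference is cosmetic: you use the one-sided bound $\sigma(x)\geq x-1$ where the paper uses the asymptotic $\sigma(x)=x+O(x^{-1})$, and both absorb the error into the $O(n)$ term.
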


\begin{proof}
Take $r=\lfloor n/(2\log n)\rfloor$ and $s=n-2r$.  
Since $\sigma(x)=x+O(x^{-1})$ as $x\to\infty$, we deduce
\[
\log\sigma(r-1) = \log r + O(r^{-1}) = \log n - \log\log n + O(1).
\]
Moreover, $s = n - O(n/\log n)$. Taking logarithms in Lemma \ref{lem:DK-gap-upper} therefore gives
\[
\log\gamma_n \leq \log(4/r) - s\log\sigma(r-1)
= - n\log n + n\log\log n + O(n),
\]
which proves the desired inequality.
\end{proof}

\subsection{The second largest eigenvalue and its eigenvector}
\label{sec:nodal-skeleton}

\begin{proposition}\label{prop:t-greater-two-nontree}
The second largest eigenvalue satisfies $\lambda_2(G) > 2$, and $G$ is not a tree.
\end{proposition}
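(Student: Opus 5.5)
We argue by contradiction in both parts, using only two facts: the superpolynomially small upper bound $\gamma_n=o(n^{-K})$ from Lemma \ref{lem:magnitudes-order}, and the lower bound on the gap in Lemma \ref{lem:ground-state}.

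\emph{First, $\lambda_2(G)>2$.} Suppose $\lambda_2\le 2$. The aim is to show the gap is then at least polynomially large in $1/n$. The natural tool is the identity \eqref{eq:ground-state}, but its diameter bound $2/(D(n-1)\lambda_1^{D-1})$ is useless when both $D$ and $\lambda_1$ are large. So we split into two cases according to the size of $\lambda_1$.
\begin{itemize}
\item If $\lambda_1\ge 3$, then $\gamma_n=\lambda_1-\lambda_2\ge 1$. This contradicts $\gamma_n=o(1)$.
\item If $\lambda_1<3$, we use that $G$ is connected, so $\lambda_1\ge\sqrt{\Delta}$. This gives maximum degree at most $8$, and hence $\lambda_1\le 3$, but $D$ may be of order $n$. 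We then need a polynomial lower bound on $\lambda_1-\lambda_2$ for connected bounded-degree graphs.
\end{itemize}

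\emph{The subcase $\lambda_1<3$.} A cleaner route than the diameter bound is to compare $\lambda_1$ directly with $2$. If $\lambda_1>2$ by a polynomial margin, we are done. So the delicate regime is $\lambda_1$ close to $2$ with $\lambda_2\le 2$ also close to $2$. Connected graphs with $\lambda_1\le 2$ are exactly the Smith graphs (cycles, paths, $\tilde D$, $\tilde E$ types), and their gaps are explicitly $\Omega(n^{-2})$.

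For $\lambda_1>2$ slightly, I would first try the following. The vertices of degree $\ge 3$ in a graph of bounded degree, together with long internal paths, make the graph close to a Smith-type structure. Then I would use Cauchy interlacing, Lemma \ref{lem:interlace-inequality}, with a vertex deletion to separate the components.

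An alternative that avoids this structure theory is to work with $\lambda_1^2-\lambda_2^2$ via the Dirichlet identity, with weights $x_ux_v$. With $\lambda_1\le 3$, the Perron vector entries along a shortest path decay at most like $3^{-k}$, which is again exponential. So this alternative does not obviously help.

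\emph{Main obstacle.} This subcase is the main obstacle: bounding the gap polynomially from below for $\lambda_1\in(2,3)$. My first attempt would be to show that for such graphs $\lambda_1-2$ and $2-\lambda_2$ cannot both be superpolynomially small. The idea is to test the Rayleigh quotient with a vector supported near a branch vertex.

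\emph{Second, $G$ is not a tree.} Since $\lambda_2>2$, the step above forces $\lambda_1>2$. For trees, I would follow the standard trick. If $G$ is a tree, then $A$ is bipartite. Take an edge $uv$ on the path joining the two petals where $\bm y$ changes sign. Deleting it splits $G$ into subtrees $T_1,T_2$, with $\lambda_2(G)\le\max\lambda_1(T_i)$ by interlacing.

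Then we construct a non-tree competitor by replacing one subtree by a denser graph on the same vertex set with larger spectral radius. This produces a strictly smaller gap and contradicts minimality. Concretely, adding an edge inside a petal far from the cut changes $\lambda_1$ and $\lambda_2$ by comparable first-order amounts $2x_ax_b$ versus $2y_ay_b$. We choose the added edge $ab$ so that $y_ay_b/x_ax_b$ is larger than $1$, which is possible because $f=y/x$ is unbounded relative to the gap. This gives the contradiction.

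The first-order perturbation estimate must be controlled by the tiny $\gamma_n$. I expect this to be routine, given the exponential smallness from Lemma \ref{lem:magnitudes-order}.
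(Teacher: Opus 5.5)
There is a genuine gap in both halves, with a common source. You used only the weak consequence $\gamma_n=o(n^{-K})$ of Lemma~\ref{lem:magnitudes-order}, but the lemma actually gives $\gamma_n\le\exp\{-n\log n+n\log\log n+O(n)\}$, which is smaller than $c^{-n}$ for every fixed $c$. Because of this, you concluded that the regime $\lambda_2\le 2$, $\lambda_1<3$ needs a \emph{polynomial} lower bound on the gap. You never carry that subcase out: the Smith-graph comparison and the test vector near a branch vertex are only sketched, and it is not clear they would give such a bound. No such bound is needed, because the diameter estimate you dismissed already suffices. If $\lambda_2\le2$, then $\lambda_1=\lambda_2+\gamma_n<3$, and Lemma~\ref{lem:ground-state} with $D\le n-1$ gives $\gamma_n\ge 2(n-1)^{-2}3^{-(n-2)}=\exp\{-O(n)\}$. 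This contradicts the superexponential upper bound. The decay $3^{-k}$ that you called ``again exponential'' is exactly what is wanted, and neither the case split on $\lambda_1\ge3$ nor the degree bound is required.

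For the tree case, your perturbation argument does not work. The first-order formulas $\delta\lambda_1\approx2x_ax_b$ and $\delta\lambda_2\approx2y_ay_b$ are valid only when the perturbation is small compared with $\lambda_1-\lambda_2=\gamma_n$. Here $\gamma_n$ is superexponentially small, while adding an edge is not a small perturbation. In this near-degenerate regime the top two eigenvalues are governed by the $2\times2$ compression onto $\operatorname{span}\{\bm x,\bm y\}$, whose off-diagonal entry $x_ay_b+x_by_a$ is of the same size as $2x_ax_b$. Concretely, an edge added inside one petal raises that petal's local spectral radius by a non-negligible amount while the other petal is unchanged. This destroys the near-resonance and typically makes the gap much \emph{larger}. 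The choice of $ab$ with $f_af_b>1$ ``because $f$ is unbounded relative to the gap'' is unsupported. Moreover, the two-petal sign structure you invoke is only established afterwards, via simplicity of $\lambda_2$, which itself uses this proposition; relying on it here is circular.

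The intended argument is again a direct size comparison. For a tree, $\lambda_1^2\le\operatorname{tr}A(G)^2=2(n-1)$, so Lemma~\ref{lem:ground-state} with $D\le n-1$ yields $\gamma_n\ge 2(n-1)^{-2}\,(2(n-1))^{-(n-2)/2}=\exp\{-\tfrac12 n\log n+O(n)\}$. This is incompatible with the double-kite bound, whose leading term is $-n\log n$. That is precisely where the constant $1$ in front of $n\log n$ in Lemma~\ref{lem:magnitudes-order} matters.
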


\begin{proof}
If $\lambda_2\leq 2$, then $\lambda_1 = \lambda_2 + \gamma_n < 3$ by
Lemma~\ref{lem:magnitudes-order}, while Lemma~\ref{lem:ground-state} gives
$\lambda_1 - \lambda_2\geq 2(n-1)^{-2} 3^{-(n-2)}$,
contradicting Lemma \ref{lem:magnitudes-order}. If $G$ is a tree, then $\lambda_1^2\leq\operatorname{tr} A(G)^2 = 2(n-1)$.
Thus Lemma \ref{lem:ground-state} gives
\[
\lambda_1 - \lambda_2\geq\frac{2}{(n-1)^2[2(n-1)]^{(n-2)/2}}
= \exp\{-\tfrac12 n\log n+O(n)\},
\]
again contradicting Lemma \ref{lem:magnitudes-order}.
\end{proof}

\begin{lemma}\label{lem:edge-deletion}
Let $uv\in E(G)$. If $uv$ is not a cut edge, then $y_uy_v > 0$.
\end{lemma}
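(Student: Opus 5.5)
Suppose $uv$ is not a cut edge but $y_uy_v\le 0$. The idea is to delete $uv$ and show the spectral gap gets strictly smaller; $G'=G-uv$ is still connected on $n$ vertices, so this contradicts the minimality of $G$.

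\emph{Effect on $\lambda_1$.} By Perron--Frobenius, deleting an edge of a connected graph strictly decreases the spectral radius. For a quantitative bound, use the Rayleigh quotient with the unit vector $\bm{x}$:
\[
\lambda_1(G') \ge \bm{x}^\top A(G')\bm{x} = \lambda_1 - 2x_ux_v .
\]
This alone is too weak. The sharper statement we need is $\lambda_1(G')<\lambda_1$; any drop works as long as $\lambda_2$ does not drop by more.

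\emph{Effect on $\lambda_2$.} Since $y_uy_v\le 0$, we get
\[
\bm{y}^\top A(G')\bm{y} = \lambda_2 - 2y_uy_v \ge \lambda_2 .
\]
Take the two-dimensional space $W=\mathrm{span}(\bm{x},\bm{y})$ and apply Courant--Fischer (or Lemma~\ref{lem:interlace-inequality} with $m=2$). Then $\lambda_2(G')$ is at least the smaller eigenvalue of the $2\times 2$ compression
\[
B=\begin{pmatrix}\lambda_1-2x_ux_v & -(x_uy_v+x_vy_u)\\ -(x_uy_v+x_vy_u) & \lambda_2-2y_uy_v\end{pmatrix},
\]
and $\lambda_1(G')$ is at least its larger eigenvalue. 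This lower-bounds both eigenvalues, which is the wrong direction for bounding the gap, so the compression alone does not suffice.

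\emph{Using the gap identity.} A cleaner route is the identity \eqref{eq:ground-state} applied to $G'$, but that needs $G'$'s own eigenvectors. Instead, I would argue by first-order perturbation. Write $\mu_i(t)$ for the eigenvalues of $A-tE$, where $E=\bm{e}_u\bm{e}_v^\top+\bm{e}_v\bm{e}_u^\top$ and $t\in[0,1]$. Suppose $\lambda_1$ and $\lambda_2$ stay simple and separated from $\lambda_3$ along the path (this uses $\lambda_1-\lambda_2$ tiny and needs a separate argument). Then, with normalized eigenvectors $\bm{x}(t),\bm{y}(t)$, Hellmann--Feynman gives
\[
\frac{d}{dt}(\mu_1-\mu_2) = -2x_u(t)x_v(t) + 2y_u(t)y_v(t).
\]
At $t=0$ this equals $-2x_ux_v+2y_uy_v<0$. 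If the sign persists we are done. Showing it persists is the delicate part.

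\emph{Alternative global comparison.} Avoid the path entirely. Consider $\bm{y}'$, the $\lambda_2(G')$-vector of $G'$, and $\bm{x}'>0$, its Perron vector; $G'$ is connected, so $\bm{x}'>0$. Compute
\[
(\lambda_1-\lambda_1(G'))\,\bm{x}^\top\bm{x}' = 2\cdot\tfrac12(x_ux'_v+x_vx'_u)>0 .
\]
For $\lambda_2$, use the min-max characterization $\lambda_2(G')=\max_{\bm{z}\perp\bm{x}'}\bm{z}^\top A(G')\bm{z}$. Take $\bm{z}$ to be $\bm{y}$ projected orthogonally to $\bm{x}'$. The projection error is small, because $\bm{x}'\approx\bm{x}$ when the deleted edge contributes little. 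This should give
\[
\lambda_2(G')\ge \lambda_2-2y_uy_v-O(\text{error}).
\]
Combined with the above, we need $\lambda_1-\lambda_1(G')$ to exceed the error.

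\emph{Main obstacle.} The hardest step is controlling the case $y_uy_v=0$ exactly, together with the interplay when $\bm{x}'$ rotates. I expect the cleanest fix is strict inequality via Perron--Frobenius, $\lambda_1(G')<\lambda_1$, plus the observation that $\lambda_2(G')\ge\lambda_2$. That observation comes from the variational principle using $\mathrm{span}(\bm{x},\bm{y})$: on this space the form $A(G')$ is at least $\lambda_2$ on all of $\bm{y}$'s direction adjusted appropriately. Precisely, I would show that the $2\times 2$ compression has smaller eigenvalue at least $\lambda_2$ whenever $y_uy_v\le 0$, checking the determinant condition $\det(B-\lambda_2 I)\ge0$ and $\mathrm{tr}(B)\ge 2\lambda_2$. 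I expect this determinant inequality to be the main computation; it reduces, via $\lambda_1-\lambda_2=\gamma_n$ being tiny, to comparing $\gamma_n$ with $x_ux_v$ and $(x_uy_v+x_vy_u)^2$. Once $\lambda_2(G')\ge\lambda_2$ and $\lambda_1(G')<\lambda_1$, we get $\gap(G')<\gamma_n$, the desired contradiction.
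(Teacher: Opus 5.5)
\textbf{There is a genuine gap.} Your overall plan is the right one: show $\lambda_1(G-uv)<\lambda_1$ by Perron--Frobenius, and show $\lambda_2(G-uv)\ge\lambda_2$ with a two-dimensional test space. The problem is that the test space you propose, $\mathrm{span}(\bm{x},\bm{y})$, cannot give the second inequality.

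For $B-\lambda_2 I$ to be positive semidefinite, its $(1,1)$ entry must be nonnegative, i.e.\ $\gamma_n\ge 2x_ux_v$. But $\gamma_n$ is superexponentially small, so this fails for any edge carrying non-negligible Perron weight. For example, on clique edges of a double kite $x_ux_v\approx 1/(2q)$. Nor is there any way to prove $\gamma_n\ge 2x_ux_v$ in general.

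Concretely, when $\gamma_n<2x_ux_v$ your determinant condition reads $(\gamma_n-2x_ux_v)(-2y_uy_v)\ge (x_uy_v+x_vy_u)^2$. The left side is nonpositive, so the condition fails. Even if $y_u=y_v=0$, the smaller eigenvalue of $B$ is $\lambda_1-2x_ux_v<\lambda_2$. So the tininess of $\gamma_n$ works against you, not for you.

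Your perturbative alternatives founder on the same near-degeneracy. Only the span of the top two eigenvectors is stable when one edge is deleted, since its gap to $\lambda_3$ is large. Inside that span, $\bm{x}'$ can rotate by a large angle, because eigenvector perturbation bounds for $\bm{x}$ are of order $\|E\|/\gamma_n$. So the premise $\bm{x}'\approx\bm{x}$ is unjustified, and the sign of your Hellmann--Feynman derivative need not persist along $t$.

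\textbf{The missing idea} is to drop $\bm{x}$ from the test space and split $\bm{y}$ instead. Write $\bm{y}=\bm{y}_+-\bm{y}_-$, where $\bm{y}_\pm$ are nonnegative with disjoint supports. Both are nonzero because $\bm{y}\perp\bm{x}>0$, so $\mathrm{span}(\bm{y}_+,\bm{y}_-)$ is two-dimensional.

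From $(A-\lambda_2 I)\bm{y}_+=(A-\lambda_2 I)\bm{y}_-$ and $\langle\bm{y}_+,\bm{y}_-\rangle=0$, all entries of the form $A-\lambda_2 I$ on this space equal $c:=\bm{y}_+^{\top}A\bm{y}_-\ge 0$.

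Now take $y_u\ge 0\ge y_v$. Deleting $uv$ leaves the diagonal entries unchanged, since $(y_+)_v=(y_-)_u=0$. It lowers the off-diagonal entry to $c-d$, where $d=(y_+)_u(y_-)_v$ satisfies $0\le d\le c$. The resulting matrix
\[
\begin{bmatrix} c & c-d\\ c-d & c\end{bmatrix}
\]
is positive semidefinite, with determinant $d(2c-d)\ge 0$, and this property survives normalizing $\bm{y}_\pm$.

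Poincar\'e separation (Lemma~\ref{lem:interlace-inequality}) then gives $\lambda_2(G-uv)\ge\lambda_2$ with no comparison to $\gamma_n$ at all. Together with $\lambda_1(G-uv)<\lambda_1$, this yields the contradiction you were aiming for.
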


\begin{proof}
Suppose by contradiction that $y_uy_v \leq 0$. 
Let $A$ be the adjacency matrix of $G$. Write $\bm{y}_+$ and $\bm{y}_-$ for the 
positive and negative parts of $\bm{y}$, respectively; thus both vectors are nonnegative and   
$\bm{y} := \bm{y}_+ - \bm{y}_-$, $\langle \bm{y}_+, \bm{y}_-\rangle = 0$. Put $c := \bm{y}_+^{\top} A \bm{y}_- \geq 0$.
Since $A\bm{y} = \lambda_2 \bm{y}$, we see $(A - \lambda_2 I)(\bm{y}_+ - \bm{y}_-) = 0$. 
Hence $(A - \lambda_2I) \bm{y}_+ = (A - \lambda_2I) \bm{y}_-$.
Multiplying on the left by $\bm{y}_+^{\top}$ and $\bm{y}_-^{\top}$, respectively, we obtain
\[
\bm{y}_+^{\top} (A - \lambda_2I) \bm{y}_+ = \bm{y}_+^{\top} (A - \lambda_2I)\bm{y}_-, \quad 
\bm{y}_-^{\top} (A - \lambda_2I) \bm{y}_+ = \bm{y}_-^{\top} (A - \lambda_2I) \bm{y}_-.
\]
Together with $\langle \bm{y}_+, \bm{y}_-\rangle = 0$,
\begin{equation}\label{eq:pq-forms}
\bm{y}_+^{\top} (A - \lambda_2I) \bm{y}_+ = \bm{y}_-^{\top} (A-\lambda_2I)\bm{y}_- 
= \bm{y}_+^{\top} (A - \lambda_2I)\bm{y}_- = c.
\end{equation}

Suppose, after interchanging $u$ and $v$ if necessary, that
$y_u\geq 0 \geq y_v$, and set $d := (y_+)_u(y_-)_v\geq 0$. Let $M = A(G-uv) - \lambda_2I$. Then 
\[
\begin{bmatrix}
\bm{y}_+^{\top} M\bm{y}_+ & \bm{y}_+^{\top} M \bm{y}_- \\
\bm{y}_-^{\top} M\bm{y}_+ & \bm{y}_-^{\top} M\bm{y}_-
\end{bmatrix} =
\begin{bmatrix}
c & c-d \\
c-d & c
\end{bmatrix}.
\]
This matrix is positive semidefinite, since $c\geq 0$ and its determinant $d(2c-d)\geq 0$;
here $c\geq d$, since $c = \bm{y}_+^{\top} A\bm{y}_-$. Applying Lemma \ref{lem:interlace-inequality} then yields
$\lambda_2(G-uv) \geq \lambda_2$.
Since $uv$ is not a cut edge, the graph $G-uv$ is connected. Consequently, $\lambda_1(G-uv) < \lambda_1$.
It follows that $\lambda_1(G-uv) - \lambda_2(G - uv) < \lambda_1 - \lambda_2$, 
which contradicts the minimality of $G$. This proves $y_uy_v > 0$.
\end{proof}

By Lemma~\ref{lem:edge-deletion}, every edge joining
opposite signs is a cut edge; and every edge incident with a vertex of $Z$ is a cut edge.

\begin{lemma}\label{lem:lambda2-simple}
The eigenvalue $\lambda_2(G)$ is simple. 
\end{lemma}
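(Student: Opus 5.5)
We argue by contradiction: suppose the $\lambda_2$-eigenspace $W$ has dimension at least $2$. The idea is to pick a vector in $W$ whose sign pattern is forbidden by Lemma~\ref{lem:edge-deletion}.

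\textbf{Step 1 (a forbidden zero).} Fix any vertex $w$. The map $\bm z\mapsto z_w$ is a linear functional on $W$. Since $\dim W\ge 2$, its kernel contains a nonzero vector $\bm z\in W$ with $z_w=0$. Normalize $\bm z$ to a unit vector.

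The proof of Lemma~\ref{lem:edge-deletion} used only two facts about $\bm y$: that $A\bm y=\lambda_2\bm y$, and the minimality of $G$. It therefore applies verbatim to $\bm z$. Hence every edge $uv$ with $z_uz_v\le 0$ is a cut edge. Since $z_w=0$, every edge at $w$ is a cut edge. We choose $w$ to be a vertex lying on a cycle; such a vertex exists because $G$ is not a tree, by Proposition~\ref{prop:t-greater-two-nontree}. An edge of that cycle at $w$ is not a cut edge, yet $z_w=0$ forces it to be one. This is a contradiction, so $\dim W=1$.

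\textbf{Step 2 (checking the transfer of Lemma~\ref{lem:edge-deletion}).} The lemma's argument needs two inputs. First, the Poincaré separation step needs $\bm z_+$ and $\bm z_-$ orthogonal and spanning a $2$-dimensional space. If one of them is zero, the same argument with the $1\times1$ compression $\bm z^\top M\bm z=\bm z^\top(A-\lambda_2I)\bm z+2z_uz_v\ge 0$ still gives $\lambda_2(G-uv)\ge\lambda_2$. This is in fact what happens when one endpoint is $0$. Second, it needs $\lambda_1(G-uv)<\lambda_1$ for connected $G-uv$, which holds by Perron--Frobenius.

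A subtlety is that Poincaré separation bounds $\lambda_2(G-uv)$ from below by the second eigenvalue of the compression only when the compression is $2\times 2$. In the degenerate case where $\bm z$ has constant sign, we use $\bm x$ together with $\bm z$ instead. Alternatively, we note that a nonzero vector orthogonal to the Perron vector $\bm x$ must change sign, so $\bm z_+$ and $\bm z_-$ are both nonzero. Then the $2\times2$ matrix is exactly $\begin{bmatrix}c&c-d\\c-d&c\end{bmatrix}$, which is positive semidefinite as before.

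\textbf{Main obstacle.} The delicate point is justifying that Lemma~\ref{lem:edge-deletion} applies to an arbitrary vector of $W$ and not just the fixed $\bm y$. Its proof is stated for "$\bm y$, a unit $\lambda_2$-eigenvector", which was arbitrary, so this is automatic. The rest is the simple dimension count of Step 1.
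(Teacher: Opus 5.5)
Your argument is correct and is exactly the paper's proof. Since $G$ is not a tree, some edge lies on a cycle; a $\lambda_2$-eigenspace of dimension at least two contains a nonzero vector vanishing at an endpoint of that edge; and Lemma~\ref{lem:edge-deletion}, whose proof works for any unit $\lambda_2$-eigenvector, then forces that edge to be a cut edge.

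One small correction to Step 2. A $1\times 1$ compression would only give $\lambda_1(G-uv)\geq\lambda_2$, not $\lambda_2(G-uv)\geq\lambda_2$, and a zero endpoint does not make $\bm z_+$ or $\bm z_-$ vanish. This case never occurs, however, because $\bm z\perp\bm x$ forces both $\bm z_+$ and $\bm z_-$ to be nonzero, as you note afterwards.
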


\begin{proof}
By Proposition~\ref{prop:t-greater-two-nontree}, $G$ has a non-cut edge $uv$.
If $\lambda_2$ had multiplicity at least two, its eigenspace would contain a
nonzero vector vanishing at $u$, contradicting Lemma~\ref{lem:edge-deletion}.
Thus $\lambda_2$ is simple.
\end{proof}

In the next subsection, we partition $V(G)$ into three subsets according to the sign of the entries of 
$\bm{y}$, and investigate relevant properties. Let
\[
P := \{v\in V(G): y_v > 0\}, \quad N := \{v\in V(G): y_v < 0\}, \quad Z := \{v\in V(G): y_v = 0\}.
\]

\subsection{Local structure}

We will need the following result, which appears as a special case of a more general result in \cite{Urschel2018}; for convenience, we reproduce a proof here.

\begin{lemma}[\cite{Urschel2018}]\label{lem:nodal-domains-connected}
The induced subgraphs $G[P]$ and $G[N]$ are both connected.
\end{lemma}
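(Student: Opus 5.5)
We argue by contradiction, in the spirit of the standard nodal-domain argument for the second eigenvector. Suppose $G[P]$ is disconnected; the case of $N$ is symmetric after replacing $\bm{y}$ by $-\bm{y}$.

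\textbf{Step 1: a test vector.} Let $P_1$ be the vertex set of one component of $G[P]$ and set $P_2:=P\setminus P_1\neq\emptyset$. Let $\bm{a}$ be the restriction of $\bm{y}$ to $P_1$ and $\bm{b}$ its restriction to $P_2$, each extended by zero. Let $\bm{c}:=\bm{y}_-$, so that $\bm{y}=\bm{a}+\bm{b}-\bm{c}$, and these three vectors are nonnegative with disjoint supports. There are no edges between $P_1$ and $P_2$, hence $\bm{a}^\top A\bm{b}=0$.

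\textbf{Step 2: the Rayleigh-quotient identities.} Put $M:=A-\lambda_2 I$. Reading the eigen-equation $M\bm{y}=0$ on the vertices of $P_1$ gives $(M\bm{a})_v=(M\bm{c})_v$ for $v\in P_1$, since $\bm{b}$ contributes nothing there. Multiplying by $\bm{a}^\top$ yields $\bm{a}^\top M\bm{a}=\bm{a}^\top A\bm{c}\ge 0$, and likewise $\bm{b}^\top M\bm{b}=\bm{b}^\top A\bm{c}\ge0$. On $N$, the eigen-equation gives $\bm{c}^\top M\bm{c}=\bm{c}^\top A\bm{a}+\bm{c}^\top A\bm{b}$.

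Now consider the two-dimensional space spanned by the orthonormalized vectors $\bm{a}$ and $\bm{b}-\bm{c}$. These are orthogonal because their supports are disjoint. The Gram matrix of $M$ on this space is
\[
\begin{bmatrix} \alpha & -\alpha\\ -\alpha & \beta+\alpha+\beta-\beta\end{bmatrix},
\qquad \alpha:=\bm{a}^\top A\bm{c},\ \beta:=\bm{b}^\top A\bm{c},
\]
before normalization. Indeed, $(\bm{b}-\bm{c})^\top M(\bm{b}-\bm{c})=\beta-2\beta+\alpha+\beta=\alpha$, and the off-diagonal entry is $\bm{a}^\top M(\bm{b}-\bm{c})=-\alpha$. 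After scaling, the matrix is positive semidefinite. Applying Lemma~\ref{lem:interlace-inequality} with $m=2$, we get $\lambda_2(A)\ge\lambda_2+\mu_2\ge\lambda_2$. This is not yet a contradiction.

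\textbf{Step 3: extracting the contradiction.} The key is to exhibit a vector in a $\lambda_2$-type space that vanishes somewhere it should not. Take $\bm{z}:=\bm{a}\cdot\|\bm{b}-\bm{c}\|^2$-weighted combination in the kernel of the Gram form. Concretely, choose $\bm{z}=t\bm{a}+(\bm{b}-\bm{c})$ with $t$ chosen so that $\bm{z}^\top M\bm{z}=0$ and $\bm{z}\perp\bm{x}$. This is possible because the Gram matrix is positive semidefinite and the span is two-dimensional, so some nonzero combination is orthogonal to $\bm{x}$ and has $\bm{z}^\top A\bm{z}\ge\lambda_2\|\bm{z}\|^2$.

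By the variational characterization, $\bm{z}$ must then be a $\lambda_2$-eigenvector. Lemma~\ref{lem:lambda2-simple} forces $\bm{z}$ to be a multiple of $\bm{y}$. Since $t\neq 1$ in general, comparing coordinates on $P_1$ and $P_2$ gives a contradiction. If instead $t=1$, so that $\bm{z}=\bm{y}$, we can use a second combination instead, such as $\bm{a}$ itself when $\alpha=0$. When $\alpha=0$ we have $\bm{a}^\top M\bm{a}=0$ and $\bm{a}$ has no neighbours in $N$. Then $P_1$ is adjacent only to $Z$, and every such edge is a cut edge by Lemma~\ref{lem:edge-deletion}, which can be ruled out directly.

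\textbf{Main obstacle.} The delicate point is Step 3. One has to ensure that the nonzero vector produced by the positive-semidefinite Gram form is genuinely different from $\bm{y}$, and one has to handle the degenerate cases $\alpha=0$ or $\beta=0$. In those cases I would fall back on the edge-deletion trick of Lemma~\ref{lem:edge-deletion}, deleting a non-cut edge inside the component, together with the simplicity of $\lambda_2$ from Lemma~\ref{lem:lambda2-simple}. Alternatively, one can use the fact that a $\lambda_2$-eigenvector vanishing identically on a component of $G[P]$, while being nonzero elsewhere, contradicts the eigen-equation at a boundary vertex.
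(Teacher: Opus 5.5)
There is a genuine gap, and it is in Step 3. The two-dimensional space you work in, $\operatorname{span}\{\bm a,\bm b-\bm c\}$, contains $\bm y=\bm a+(\bm b-\bm c)$ itself. Since $\bm a\geq 0$ is nonzero and $\bm x>0$, we have $\langle\bm a,\bm x\rangle>0$. So the span is not contained in $\bm x^\perp$, and its intersection with $\bm x^\perp$ is exactly the line through $\bm y$.

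Consequently, the only $t$ with $\bm z=t\bm a+(\bm b-\bm c)\perp\bm x$ is $t=1$, i.e.\ $\bm z=\bm y$. The same conclusion follows from the form itself: on this span it equals $\alpha(s-t)^2$ for $s\bm a+t(\bm b-\bm c)$, and its null direction (when $\alpha>0$) is again $\bm y$. Your assertion ``$t\neq 1$ in general'' is therefore false. The construction always returns $\bm y$, and simplicity of $\lambda_2$ gives nothing.

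The fallbacks you sketch for $\alpha=0$ (cut edges into $Z$, or an eigenvector vanishing at a boundary vertex) are not worked out into arguments, and they turn out to be unnecessary. Separately, the displayed $(2,2)$ Gram entry should be $\alpha$, not $\beta+\alpha+\beta-\beta$.

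The missing idea is one more degree of freedom, so that you can impose orthogonality to both $\bm x$ and $\bm y$. Work in $\operatorname{span}\{\bm a,\bm b,\bm c\}$. Your Step 2 identities, together with $\bm a^\top A\bm b=0$, give for $\bm z=s\bm a+t\bm b-u\bm c$:
\[
\bm z^\top M\bm z=\alpha(s-u)^2+\beta(t-u)^2\geq 0 .
\]
Then proceed as follows.
\begin{itemize}
\item Choose $(s,t,u)\neq 0$ with $\bm z\perp\bm x$ and $\bm z\perp\bm y$; this is two linear conditions in three unknowns. Then $\bm z\neq 0$ because the supports of $\bm a,\bm b,\bm c$ are disjoint.
\item Courant--Fischer on $\bm x^\perp$ gives $\bm z^\top M\bm z\leq 0$, hence equality.
\item Since $\bm x^\perp$ is $A$-invariant and $M$ is negative semidefinite there, equality makes $\bm z$ a $\lambda_2$-eigenvector.
\item This eigenvector is orthogonal to $\bm y$, contradicting Lemma~\ref{lem:lambda2-simple}.
\end{itemize}
No case distinction on $\alpha$ or $\beta$ is needed. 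This is exactly the paper's argument, which treats all components of $G[P]$ and $G[N]$ at once via $\bm z^\top(A-\lambda_2 I)\bm z=-\sum_{i<j}(c_i-c_j)^2\,\bm y_i^\top A\bm y_j\geq 0$.
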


\begin{proof}
Let $S_1,\ldots,S_m$ be the connected components of $G[P]$ and $G[N]$,
and let $\bm{y}_i$ denote the restriction of $\bm{y}$ to $S_i$, extended by zero
outside $S_i$. Then $\bm{y} = \sum_{i=1}^m \bm{y}_i$.
For arbitrary real numbers $c_1,c_2,\ldots,c_m$, put
$\bm{z} := \sum_{i=1}^m c_i \bm{y}_i$.

Since the supports of $\bm{y}_i$ and $\bm{y}_j$ are disjoint when $i\neq j$, we have $\langle \bm{y}^{(i)}, \bm{y}^{(j)}\rangle = 0$, and therefore
$\bm{y}_i^{\top} (A - \lambda_2 I) \bm{y}_j = \bm{y}_i^{\top} A \bm{y}_j$ ($i\neq j$).
Since $(A - \lambda_2 I) \bm{y} = 0$, it follows that
$0 = \bm{y}_i^{\top} (A - \lambda_2I) \bm{y} = \bm{y}_i^{\top} (A - \lambda_2I) \sum_{j=1}^m \bm{y}_j$. 
Hence, for each $i$, 
\begin{equation}\label{eq:y(A-tI)y}
\bm{y}_i^{\top} (A - \lambda_2I) \bm{y}_i = -\sum_{j\neq i} \bm{y}_i^{\top} (A - \lambda_2I) \bm{y}_j 
= -\sum_{j\neq i} \bm{y}_i^{\top} A \bm{y}_j.
\end{equation}
Now expand the quadratic form of $\bm{z}$, we deduce that
\begin{align*}
\bm{z}^{\top} (A - \lambda_2I) \bm{z}
& = \bigg(\sum_i c_i \bm{y}_i\bigg)^{\top} (A - \lambda_2I) \bigg(\sum_j c_j \bm{y}_j\bigg) \\
& = \sum_i c_i^2 \bm{y}_i^{\top} (A - \lambda_2I) \bm{y}_i
+ 2\sum_{i<j} c_ic_j \bm{y}_i^{\top} (A - \lambda_2I) \bm{y}_j.
\end{align*}
Using $\bm{y}_i^{\top} (A - \lambda_2I) \bm{y}_j = \bm{y}_i^{\top} A \bm{y}_j$ for $i\neq j$ and applying \eqref{eq:y(A-tI)y}, we obtain
\begin{align}
\bm{z}^{\top} (A - \lambda_2I) \bm{z} 
& = -\sum_i c_i^2 \sum_{j\neq i} \bm{y}_i^{\top} A \bm{y}_j + 
2\sum_{i<j} c_ic_j \bm{y}_i^{\top} A \bm{y}_j \nonumber \\
& = - \sum_{i<j} (c_i^2 + c_j^2) \bm{y}_i^{\top} A \bm{y}_j 
+ 2\sum_{i<j} c_ic_j \bm{y}_i^{\top} A \bm{y}_j \nonumber \\
& = -\sum_{i<j} (c_i-c_j)^2\,\bm{y}_i^{\top} A \bm{y}_j\geq 0. \label{eq:unified-nodal-form}
\end{align}
The last inequality holds since distinct components of the same sign
have no edges between them, whereas coordinates in components of opposite
signs have negative products.

If $m\geq3$, we may choose nonzero $c_1,\ldots,c_m$ so that
$\bm{z}\perp \bm{x}$ and $\bm{z}\perp \bm{y}$. On the subspace $\bm{x}^\perp$, the quadratic form
associated with $A - \lambda_2I$ is nonpositive; indeed, Courant--Fischer theorem gives
for $\bm{z}\perp \bm{x}$, $\bm{z}^T A \bm{z} \leq \lambda_2 \bm{z}^T\bm{z}$, which is precisely $\bm{z}^T(A - \lambda_2I) \bm{z}\leq 0$.
Thus equality holds in \eqref{eq:unified-nodal-form}, and
$\bm{z}$ lies in the $\lambda_2$-eigenspace. Since $\lambda_2$ is simple by Lemma \ref{lem:lambda2-simple}, together with
$\bm{z}\perp \bm{y}$, gives $\bm{z} = 0$, a contradiction. Hence $m\leq 2$. Together with the fact that $m\geq 2$, this yields $m=2$.
\end{proof}

The induced connected subgraphs $G[P]$ and $G[N]$ are both connected. 
For brevity, we call them the two \emph{petals} of $G$.

\begin{lemma}\label{lem:zero-skeleton}
Exactly one of the following holds.
\begin{enumerate}
\item[$(1)$] $Z = \emptyset$, and $e(P, N) = 1$.
\item[$(2)$] $Z\neq\emptyset$, and $e(P, N) = 0$. There is a unique
vertex $w\in Z$ having neighbors outside $Z$. It has precisely one neighbor
in $P$ and one in $N$; $G[Z]$ is a tree rooted at $w$. Moreover, $\lambda_1(G[P]) = \lambda_1(G[N]) = \lambda_2$.
\end{enumerate}
\end{lemma}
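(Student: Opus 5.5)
The plan is to combine Lemma~\ref{lem:edge-deletion} and Lemma~\ref{lem:nodal-domains-connected} with the eigenvalue equations. First, every edge between $P$ and $N$, and every edge incident with $Z$, is a cut edge (Lemma~\ref{lem:edge-deletion}).

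\emph{Case $Z=\emptyset$.} Since $G$ is connected, $e(P,N)\ge1$. If there were two $P$--$N$ edges $e_1,e_2$, then, because $G[P]$ and $G[N]$ are connected by Lemma~\ref{lem:nodal-domains-connected}, each $e_i$ would lie on a cycle formed by $e_1$, $e_2$ and paths inside $P$ and $N$. That contradicts $e_i$ being a cut edge, so $e(P,N)=1$.

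\emph{Case $Z\neq\emptyset$.} All edges touching $Z$ are cut edges, so $G[Z]$ is a forest. Every $P$--$N$ edge is a cut edge, so the same cycle argument applies to any two edges leaving the connected set $P$ toward $N\cup Z$. Contracting $P$ and $N$ to single vertices gives a graph all of whose edges are cut edges, hence a tree $T$. The contracted vertex $p$ has a unique edge to the vertex set of $N\cup Z$ along the tree path to $n$, and in fact any edges from $p$ into other branches are also tree edges.

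I would first show that no vertex of $Z$ has exactly one neighbor outside $Z$. Suppose $z\in Z$ is adjacent to $P$; the eigenvalue equation at $z$ gives $0=\sum_{u\sim z}y_u$. Consider a leaf branch of $T$, a subtree hanging off through a single edge. On a pendant subtree $S\subseteq Z$ hanging from a vertex, the entries of $\bm y$ vanish. At a vertex $z\in Z$ with a single non-$Z$ neighbor $u$, the equation forces $y_u=0$, a contradiction. More generally, take the branch structure: each component $Z'$ of $G[Z]$ together with its attachments. Using the equations at the leaves of $Z'$, I would argue inductively that a component touching only one of $P,N$, or touching one of them more than once, is impossible. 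Sums over neighbors in $Z$ contribute zero, so the non-$Z$ neighbor terms must cancel. A vertex of $Z$ adjacent to $P$ only (with $P$ reachable by a single edge, since $T$ is a tree) has sum $y_u>0\ne0$. Hence every vertex of $Z$ with non-$Z$ neighbors has at least one neighbor in $P$ and one in $N$. Since $P$ and $N$ each have only one connecting route in the tree $T$, there is a unique such vertex $w$, with exactly one neighbor in $P$ and one in $N$, and $e(P,N)=0$.

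For the remaining structure, connectivity of $G$ together with the fact that all other $Z$-vertices attach only through $Z$ gives that $G[Z]$ is connected, hence a tree containing $w$. Finally, restricting $\bm y$ to $P$ gives a positive vector $\bm y|_P$. For $v\in P$, its neighbors lie only in $P\cup\{w\}$ and $y_w=0$, so $A(G[P])\bm y|_P=\lambda_2\bm y|_P$. By Perron--Frobenius, since $G[P]$ is connected, $\lambda_1(G[P])=\lambda_2$, and similarly for $N$.

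The main obstacle is the cancellation argument excluding $Z$-vertices with one-sided attachments. It has to be run carefully using the eigenvalue equation at $Z$-vertices, where all $Z$-neighbors contribute $0$, together with the tree structure of $T$.
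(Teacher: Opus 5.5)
Your proposal is correct and is essentially the paper's argument. Lemma~\ref{lem:edge-deletion} together with the connectivity of $G[P]$ and $G[N]$ excludes cycles. The eigenvalue equation at a $Z$-vertex, where $Z$-neighbours contribute nothing and there is at most one neighbour in each of $P$ and $N$, then forces exactly one neighbour in each. The uniqueness of $w$, the tree structure of $G[Z]$, and $\lambda_1(G[P])=\lambda_1(G[N])=\lambda_2$ follow exactly as in the paper.

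Contracting $P$ and $N$ to the tree $T$ just packages the paper's separate cycle arguments, and it has the small bonus of making $e(P,N)=0$ explicit, which the paper uses only implicitly. The leaf-branch induction you float is unnecessary, because the one-vertex cancellation argument you also give already suffices.
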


\begin{proof}
(1) If $Z=\emptyset$, since $G$ is connected, we have $e(P, N)\geq 1$. 
We show that $e(P, N)\geq 2$ is impossible. If there are two edges
in $E(P,N)$, by Lemma \ref{lem:edge-deletion}, the two edges must be cut edges.
However, $G[P]$ and $G[N]$ are both connected by Lemma \ref{lem:nodal-domains-connected},
the two such edges must lie on a cycle, a contradiction. This proves (1).

(2) Now suppose that $Z\neq\emptyset$. Each vertex in $Z$ has at most one neighbor
in $P$ and at most one in $N$. Indeed, if a vertex $z\in Z$ had two 
neighbors in $P$ (respectively, in $N$), then these two edges, 
together with a path between the two neighbors in $G[P]$ (respectively, in $G[N]$), 
would form a cycle, contradicting Lemma~\ref{lem:edge-deletion}. 
Since $G$ is connected, there is at least one vertex $v\in Z$ that has a neighbor in $P\cup N$.
The eigenvalue\,--\,eigenvector equation at  
$v$ gives $0 = \sum_{u\sim v} y_u$. Hence, it has exactly one neighbor in $P$ and exactly 
one neighbor $N$. 

Next, we show that there cannot be two such zero vertices; indeed, paths in $P$ and $N$, together
with their four incident edges, would form a cycle, a contradiction to Lemma \ref{lem:edge-deletion}. 
Thus there is a unique such vertex, denoted $w$. The connectedness of $G$ forces all of $Z$ to lie in the component containing $w$. 
Moreover, every edge of $G[Z]$ is a cut edge by Lemma \ref{lem:edge-deletion}, and hence $G[Z]$ is
a tree. 

Finally, the restriction $\bm{y}|_P$ is positive and satisfies
$A(G[P]) \bm{y}|_P = \lambda_2\,\bm{y}|_P$, since every external neighbor has zero coordinate. 
By the Perron--Frobenius theorem for connected graphs, it follows that $\lambda_1(G[P]) = \lambda_2$. The proof for $N$, using $-\bm{y}|_N$, is identical.
\end{proof}

In light of Lemma~\ref{lem:HS-subdivision}, we can further derive additional property of $Z$.

\begin{lemma}\label{lem:no-zero-branches}
If $Z\neq\emptyset$, then $|Z| = 1$.
\end{lemma}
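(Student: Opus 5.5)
Suppose $Z\neq\emptyset$ and $|Z|\geq 2$. We argue by contradiction and construct a connected $n$-vertex graph $G'$ with $\gap(G')<\gap(G)$. By Lemma~\ref{lem:zero-skeleton}, $G[Z]$ is a tree $T$ rooted at $w$, $w$ has exactly one neighbor $p\in P$ and one $n'\in N$, and no other vertex of $Z$ has neighbors outside $Z$. Since $|Z|\geq 2$, $T-w$ is a nonempty forest hanging off $w$, and $d_G(w)\geq 3$. Because $G$ is not a tree (Proposition~\ref{prop:t-greater-two-nontree}) and $G[P]$, $G[N]$ are connected with $\lambda_1(G[P])=\lambda_1(G[N])=\lambda_2>2$, each petal contains a cycle (a tree or unicyclic petal would force the spectral radius to be too small relative to the large clique-like structure). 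It is cleaner, however, to avoid this and argue directly.

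\textbf{Construction.} Let $\ell=|Z|-1\geq 1$. Delete the $\ell$ vertices of $Z\setminus\{w\}$ and replace the single vertex $w$ on the $p$--$w$--$n'$ connection by a path $p, w_0, w_1,\dots,w_\ell, n'$. This gives a graph $G'$ on $n$ vertices, $G[P]\cup G[N]$ joined by a path with $|Z|$ internal vertices. Equivalently, $G'$ is obtained from $G$ by removing the pendant tree $T-w$ and subdividing the edge $wn'$ a total of $\ell$ times.

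\textbf{Step 1: $\lambda_2(G')\geq\lambda_2$.} We extend $\bm{y}|_P$ and $\bm{y}|_N$ by zero on the new path vertices. Both are eigenvectors of $A(G')$ restricted to their supports, with eigenvalue $\lambda_2$, and they have disjoint supports with no edges between them. So on their two-dimensional span the Rayleigh quotient of $A(G')$ is identically $\lambda_2$. By Lemma~\ref{lem:interlace-inequality} (Courant--Fischer), $\lambda_2(G')\geq\lambda_2$.

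\textbf{Step 2: $\lambda_1(G')<\lambda_1$.} Deleting the pendant tree $T-w$ strictly decreases $\lambda_1$, since $G$ is connected and we pass to a proper connected subgraph. The harder part is the subsequent subdivisions. Passing from $G-(T-w)$ to $G'$ subdivides an edge on an internal path between $p$ and $n'$. For this to apply, $p$ and $n'$ need degree $\geq 3$; this holds because the petals contain vertices of degree $\geq 3$ (their spectral radius exceeds $2$), and we may choose the path between such vertices. Lemma~\ref{lem:HS-subdivision} then gives that each subdivision strictly decreases $\lambda_1$, provided the spectral radius stays above $2$, which it does since it is at least $\lambda_1(G[P])=\lambda_2>2$.

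A simpler route avoids the degree condition. Observe that $G'$ is isomorphic to a subgraph of $G$ only when $T$ is a path, so instead we compare through Perron vectors: use the Perron vector of $G'$ and the walk-count or resolvent expression $\lambda_1$ solving $1=\bm e^\top R(x)\bm e$-type equations. I expect the main obstacle to be verifying the internal-path hypothesis of Lemma~\ref{lem:HS-subdivision} at the endpoints $p$ and $n'$. If $p$ has degree $2$, we extend the internal path into $P$ until reaching a vertex of degree $\geq 3$, which exists because $G[P]$ is not a path or cycle (as $\lambda_1(G[P])>2$). The resulting edge still lies on an internal path, so the lemma applies.

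\textbf{Conclusion.} Combining the two steps, $\gap(G')<\gap(G)$ with $G'$ connected on $n$ vertices, contradicting the minimality of $G$. Hence $|Z|=1$.
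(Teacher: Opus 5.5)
Your argument is correct and is essentially the paper's. Both trade vertices of $Z\setminus\{w\}$ for subdivision vertices on the $P$--$w$--$N$ connection. Both make $\lambda_1$ drop via Lemma~\ref{lem:HS-subdivision}, with the same check that the edge lies on an internal path because neither petal is a path or a cycle, and both show that $\lambda_2$ does not drop. The differences are minor: the paper moves a single leaf $w'$ of $G[Z]$ and bounds $\lambda_2(H')$ by interlacing after deleting $w$, while you move all of $Z\setminus\{w\}$ at once and use the test space spanned by the zero-extensions of $\bm{y}|_P$ and $\bm{y}|_N$, which works equally well. The unproved asides about petals containing cycles and the ``simpler route'' via resolvents play no role and should be dropped.
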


\begin{proof}
Suppose, for a contradiction, that $|Z|\geq 2$, and let $w\in Z$ be the vertex with a neighbor in $P$ and $N$. By Lemma \ref{lem:zero-skeleton}, 
$G[Z]$ is a tree. Since $|Z|\geq 2$, $G[Z]$ has a leaf $w'\neq w$. Set $H = G - w'$. 
The graph $H$ is connected and is a proper subgraph of $G$, so
$\lambda_1(H) < \lambda_1$.
Let $a\in P$ be the unique neighbor of $w$, and construct $H'$ from
$H$ by subdividing the edge $aw$, using $w'$ as the subdivision vertex.
The resulting graph $H'$ is again connected. We claim that $\lambda_1(H') < \lambda_1(H)$.
Indeed, Lemma \ref{lem:zero-skeleton} gives $\lambda_1(G[P]) = \lambda_1(G[N]) = \lambda_2 > 2$, 
so neither $G[P]$ nor $G[N]$ is a path or a cycle. 
Starting at $w$ and proceeding through $a$, follow a maximal path in $H$ 
whose internal vertices have degree two. This path cannot end at a 
degree-one vertex, since that would force $G[P]$ to be a path; 
it cannot close before meeting a vertex of degree at least three, for then $G[P]$ would
be a cycle. Thus it reaches a vertex of degree at least three. Applying the same
argument on $G[N]$ gives another such vertex. If
$w$ has degree at least three in $H$, it is one endpoint of
the required internal path; otherwise the two paths join through
$w$. Hence $aw$ lies on an internal path of $H$.
Furthermore, $H$ contains $G[P]$ as a proper subgraph, so
$\lambda_1(H) > \lambda_2 > 2$. It follows from Lemma \ref{lem:HS-subdivision} that
$\lambda_1(H') < \lambda_1(H) < \lambda_1$.

Deleting $w$ from $H'$ leaves one component $G[N]$, whose spectral radius is $\lambda_2$, and another component obtained from $G[P]$ by attaching the leaf $w'$
to $a$, whose spectral radius is therefore strictly greater than $\lambda_2$. Hence $\lambda_2(H'-w)\geq \lambda_2$.
By Cauchy interlacing, $\lambda_2(H')\geq \lambda_2$. Together with
$\lambda_1(H') < \lambda_1(H) < \lambda_1$, this implies that
$\gap(H') < \lambda_1 - \lambda_2$,
contradicting the minimality of $G$. 
\end{proof}

Henceforth, we write $H_1:= G[P]$ and $H_2:=G[N]$.  

\subsection{The approximate value of $\lambda_2(G)$}
\label{sec:defect}

In this subsection, we determine the approximate value of $\lambda_2(G)$. Roughly speaking, we will lower bound $\gap(G)$ in terms of eigenvector entries, then lower bound these entries, then show that having a spectral gap as small as the double kite is quite restrictive for the possible value of $\lambda_2$. To this end, we first fix some notation used in this subsection. 
When $Z = \emptyset$, we write the unique edge joining $H_1$ and $H_2$ is $e=u_1u_2$, where $u_i\in V(H_i)$.  
When $Z \neq\emptyset$, we denote the unique vertex in $Z$ is $w$ with neighbors $u_1$ and $u_2$ in $H_1$ and $H_2$. Moreover, we always 
write $h_i:=|V(H_i)|$, $\alpha_i:=\lambda_1(H_i)$.

\begin{lemma}\label{lem:petal-compression}
Let $\bm{z}_i$ be the unit Perron vector of $H_i$. If $Z = \emptyset$, then $\lambda_1(H_i) > \lambda_2$, $i=1,2$, and
$\lambda_1 - \lambda_2 > z_1(u_1) z_2(u_2)$.
If $|Z| = 1$, then $\lambda_1(H_1) = \lambda_1(H_2) = \lambda_2$ and
$\lambda_1 - \lambda_2\geq (z_1(u_1)^2 + z_2(u_2)^2)/\lambda_1$.
\end{lemma}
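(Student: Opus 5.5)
The plan is to handle the two cases separately, in each case comparing $G$ with the disjoint union of its petals through a small compression of $A=A(G)$ onto Perron vectors of the petals. The lower bound on $\lambda_1$ will come from Lemma~\ref{lem:interlace-inequality}, and the upper bound on $\lambda_2$ from the sign structure of $\bm{y}$ given by Lemma~\ref{lem:zero-skeleton}.

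\emph{Case $Z=\emptyset$.} By Lemma~\ref{lem:zero-skeleton}, the only edge leaving $H_1$ is $u_1u_2$. The restriction $\bm{y}_P:=\bm{y}|_P$ is positive, and $y_{u_2}<0$.
\begin{itemize}
\item The eigenvalue equation on $P$ reads $A(H_1)\bm{y}_P=\lambda_2\bm{y}_P-y_{u_2}\bm{e}_{u_1}$. So $A(H_1)\bm{y}_P\geq \lambda_2\bm{y}_P$ entrywise, with strict inequality at $u_1$.
\item Pair this with the positive Perron vector $\bm{z}_1$ of the connected graph $H_1$. This gives $(\alpha_1-\lambda_2)\langle \bm{z}_1,\bm{y}_P\rangle = z_1(u_1)|y_{u_2}|>0$, hence $\alpha_1>\lambda_2$.
\item The same argument applied to $-\bm{y}|_N$ gives $\alpha_2>\lambda_2$. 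Thus $\lambda_2<\min\{\alpha_1,\alpha_2\}$.
\end{itemize}
For $\lambda_1$, extend $\bm{z}_1,\bm{z}_2$ by zero to $V(G)$. They are orthonormal because their supports are disjoint. The compression of $A$ onto them is
\[
B=\begin{bmatrix}\alpha_1 & c\\ c & \alpha_2\end{bmatrix},\qquad c:=z_1(u_1)z_2(u_2),
\]
since $u_1u_2$ is the only edge between the supports. Lemma~\ref{lem:interlace-inequality} gives
\[
\lambda_1\geq \mu_1=\frac{\alpha_1+\alpha_2}{2}+\sqrt{\Big(\frac{\alpha_1-\alpha_2}{2}\Big)^2+c^2}.
\]
Subtracting $\min\{\alpha_1,\alpha_2\}$ leaves $\frac{|\alpha_1-\alpha_2|}{2}+\sqrt{(\frac{\alpha_1-\alpha_2}{2})^2+c^2}\geq c$. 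Combined with the strict inequality $\lambda_2<\min\{\alpha_1,\alpha_2\}$, this yields $\lambda_1-\lambda_2>c$.

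\emph{Case $|Z|=1$.} Here $\alpha_1=\alpha_2=\lambda_2$ is already part of Lemma~\ref{lem:zero-skeleton}. Set $a:=z_1(u_1)$ and $b:=z_2(u_2)$. Compress $A$ onto the orthonormal triple $\bm{z}_1,\bm{z}_2,\bm{e}_w$, where $w$ is adjacent only to $u_1$ and $u_2$ and there are no $P$--$N$ edges. The compressed matrix is
\[
B=\begin{bmatrix}\lambda_2&0&a\\0&\lambda_2&b\\a&b&0\end{bmatrix}.
\]
Its spectrum consists of $\lambda_2$, with eigenvector $(b,-a,0)^{\top}$, together with the two roots of $\mu(\mu-\lambda_2)=a^2+b^2$. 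The largest of these is $\mu_1>\lambda_2$. Lemma~\ref{lem:interlace-inequality} gives $\mu_1\leq\lambda_1$, and therefore
\[
\lambda_1-\lambda_2\geq \mu_1-\lambda_2=\frac{a^2+b^2}{\mu_1}\geq \frac{a^2+b^2}{\lambda_1}.
\]

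The computations are routine. The step needing the most care is the strict inequality $\alpha_i>\lambda_2$ in the case $Z=\emptyset$, which relies on the Perron pairing argument above together with the positivity of $\bm{z}_i$ at $u_i$. The rest is checking that the compressions have exactly the stated entries, i.e.\ that no other edges join the supports.
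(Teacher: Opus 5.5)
Your proposal is correct and follows essentially the same route as the paper: the same $2\times 2$ compression onto $\bm{z}_1,\bm{z}_2$ and the same $3\times 3$ compression onto $\bm{z}_1,\bm{z}_2,\bm{e}_w$, combined with Lemma~\ref{lem:interlace-inequality}. Your pairing identity $(\alpha_1-\lambda_2)\langle \bm{z}_1,\bm{y}_P\rangle = z_1(u_1)\,|y_{u_2}|$ spells out the subinvariance step that the paper only states briefly, and your argument also delivers the strict inequality $\lambda_1-\lambda_2>c$ claimed in the statement, which the paper's final line records only as $\geq$.
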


\begin{proof}
First assume $Z = \emptyset$. 
If $a\in P\setminus\{u_1\}$, then $a$ has no neighbor outside $P$. 
Hence, the eigenvalue\,--\,eigenvector equation at $a$ gives
\[
\big(A(G[P]) \bm{y}|_P\big)_a = \sum_{b\sim a,\, b\in P} y_b = \sum_{b\sim a} y_b 
= \lambda_2\,y_a = \lambda_2(\bm{y}|_P)_a. 
\]
At the vertex $u_1$, we have $\lambda_2\, y_u = \sum_{b\sim u,\, b\in P} y_b + y_{u_2}$. Hence, 
\[
\big(A(G[P]) \bm{y}|_P\big)_{u_1} = \sum_{b\sim u,\, b\in P} y_b 
= \lambda_2\,y_{u_1} - y_{v_1} > \lambda_2\,y_{u_1} = \lambda_2(\bm{y}|_P)_{u_1}.
\]
Since $G[P]$ is connected and $\bm{y}|_P>0$, we see $\lambda_1(G[P])>\lambda_2$.  
The argument for $G[N]$, applied to $-\bm{y}|_N$, is identical. Thus $\lambda_1(G[N]) > \lambda_2$. 

Extend $\bm{z}_1$ to all vertices of $G$ by assigning value $0$ outside $G[P]$, and
$\bm{z}_2$ to all vertices of $G$ by assigning value $0$ outside $G[N]$.
Then $\langle \bm{z}_1, \bm{z}_2\rangle = 0$. Since $\bm{z}_1$ vanishes outside $G[P]$,
$\bm{z}_1^{\top} A \bm{z}_1 = \bm{z}_1^{\top} A(G[P]) \bm{z}_1 = \lambda_1(G[P])$. 
Similarly, $\bm{z}_2^{\top} A \bm{z} = \lambda_1(G[N])$. Moreover, 
$\bm{z}_1^{\top} A \bm{z}_2 = z_1(u_1) z_2(u_2)$. It follows that
\[
\begin{bmatrix}
\bm{z}_1^{\top} A\bm{z}_1 & \bm{z}_1^{\top} A\bm{z}_2 \\
\bm{z}_2^{\top} A\bm{z}_1 & \bm{z}_2^{\top} A\bm{z}_2
\end{bmatrix} = 
\begin{bmatrix}
\lambda_1(G[P]) & z_1(u_1)z_2(u_2) \\
z_1(u_1)z_2(u_2) & \lambda_1(G[N])
\end{bmatrix}.
\]
Its larger eigenvalue is at least
$\min\{\lambda_1(G[P]), \lambda_1(G[N])\} + z_1(u_1)z_2(u_2) > \lambda_2 + z_1(u_1)z_2(u_2)$. 
Then $\lambda_1 - \lambda_2 \geq z_1(u_1)z_2(u_2)$ by Lemma~\ref{lem:interlace-inequality}.

Now assume $|Z| = 1$. Then $\lambda_1(G[P]) = \lambda_1(G[N]) = \lambda_2$ by Lemma \ref{lem:zero-skeleton}. A direct calculation yields
\[
\begin{bmatrix}
\bm{z}_1^{\top} A\bm{z}_1 & \bm{z}_1^{\top} A\bm{e}_w & \bm{z}_1^{\top} A\bm{z}_2 \\
\bm{e}_w^{\top} A \bm{z}_1 & \bm{e}_w^{\top} A\bm{e}_w & \bm{e}_w^{\top} A \bm{z}_2 \\
\bm{z}_2^{\top} A \bm{z}_1 & \bm{z}_2^{\top} A \bm{e}_w & \bm{z}_2^{\top} A\bm{z}_2 
\end{bmatrix} =
\begin{bmatrix}
\lambda_2 & z_1(u_1) & 0 \\ z_1(u_1) & 0 & z_2(u_2) \\ 0 & z_2(u_2) & \lambda_2
\end{bmatrix}.
\]
Its characteristic polynomial is $(x - \lambda_2) (x(x - \lambda_2) - z_1(u_1)^2 - z_2(u_2)^2)$,
so its largest eigenvalue $\beta$ satisfies
$\beta-\lambda_2 = (z_1(u_1)^2 + z_2(u_2)^2)/\beta$.
Since $\lambda_1\geq\beta$, we obtain
$\lambda_1 - \lambda_2\geq\beta - \lambda_2
= (z_1(u_1)^2 + z_2(u_2)^2)/\beta\geq (z_1(u_1)^2 + z_2(u_2)^2)/\lambda_1$, as required.
\end{proof}

For the remainder of the paper, we set $q := \lceil \lambda_2\rceil$.
For each $i$, choose a vertex $w_i$ at which $\bm{z}_i$ attains its maximum, and let
$u_i=v_{i,0},v_{i,1},\ldots,v_{i,d_i}=w_i$
be a shortest $u_i$--$w_i$ path. Define the integer
$e_i:=h_i-d_i-q$, and $e = e_1+e_2$.

\begin{lemma}\label{lem:root-attenuation}
For $i=1,2$, let $\bm{z}_i$ be the unit Perron vector of $H_i$. Then $e_i\geq 0$ and
\begin{equation}\label{eq:root-attenuation}
z_i(u_i)\geq\frac{z_i(w_i)}{p_{d_i}(\lambda_1(H_i))}
\geq\frac{1}{\sqrt{h_i}\,\lambda_1^{d_i}}.
\end{equation}
\end{lemma}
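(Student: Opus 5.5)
The plan is to treat the two claims separately. The inequality $e_i\ge 0$ will come from a degree count at $w_i$. The bound \eqref{eq:root-attenuation} will come from a ratio induction along the shortest path, run from $u_i$ towards $w_i$.

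\textbf{Preliminaries.} By Lemma~\ref{lem:petal-compression}, in both cases ($Z=\emptyset$ or $|Z|=1$) we have $\alpha_i=\lambda_1(H_i)\ge\lambda_2$. By Proposition~\ref{prop:t-greater-two-nontree}, $\lambda_2>2$. Since $H_i$ is a proper subgraph of the connected graph $G$, also $\alpha_i<\lambda_1$. Throughout, write $z=\bm{z}_i$, $\alpha=\alpha_i$, $d=d_i$, and $v_k=v_{i,k}$.

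\textbf{Step 1: $e_i\ge 0$.} Apply the eigenvalue--eigenvector equation in $H_i$ at the maximizing vertex $w_i$:
\[
\alpha\, z(w_i)=\sum_{b\sim w_i} z(b)\le d_{H_i}(w_i)\, z(w_i).
\]
Hence $d_{H_i}(w_i)\ge \alpha\ge\lambda_2$. Since degrees are integers, $d_{H_i}(w_i)\ge\lceil\lambda_2\rceil=q$. On a shortest $u_i$--$w_i$ path, the only path vertex adjacent to $w_i$ is $v_{d-1}$; otherwise a shortcut would exist. So if $d\ge1$, at least $q-1$ neighbours of $w_i$ lie off the path, and $h_i\ge (d+1)+(q-1)=d+q$. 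If $d=0$, then $w_i=u_i$ and $h_i\ge q+1$. In either case $e_i=h_i-d-q\ge 0$.

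\textbf{Step 2: the first inequality in \eqref{eq:root-attenuation}.} Set $r_k:=z(v_k)/z(v_{k-1})$ for $1\le k\le d$; these are well defined since $z>0$. Since all entries of $z$ are positive, the eigen-equation at $v_{k-1}$ gives $\alpha z(v_{k-1})\ge z(v_{k-2})+z(v_k)$, dropping the $z(v_{k-2})$ term when $k=1$. This yields $r_1\le\alpha=p_1(\alpha)/p_0(\alpha)$ and, for $k\ge 2$, $r_k\le \alpha-1/r_{k-1}$. I will show by induction that
\[
r_k\le \frac{p_k(\alpha)}{p_{k-1}(\alpha)}.
\]
If $r_{k-1}\le p_{k-1}/p_{k-2}$, then $1/r_{k-1}\ge p_{k-2}/p_{k-1}$. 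The recurrence \eqref{eq:path-polynomials} then gives $r_k\le (\alpha p_{k-1}-p_{k-2})/p_{k-1}=p_k/p_{k-1}$. Positivity of all the $p_j(\alpha)$, for $\alpha>2$, follows from \eqref{eq:path-hyperbolic}. Telescoping the bounds on $r_1,\dots,r_d$ gives $z(w_i)\le p_d(\alpha)\,z(u_i)$, which is the first inequality.

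\textbf{Step 3: the second inequality.} Since $z$ is a unit vector on $h_i$ coordinates, its maximum entry satisfies $z(w_i)\ge 1/\sqrt{h_i}$. By \eqref{eq:path-hyperbolic}, $p_d$ is increasing on $(2,\infty)$, and $\alpha<\lambda_1$, so $p_d(\alpha)\le p_d(\lambda_1)$. Finally, for $x\ge 2$ we have $p_j(x)=xp_{j-1}(x)-p_{j-2}(x)\le xp_{j-1}(x)$ because $p_{j-2}(x)>0$. Inductively this gives $p_d(\lambda_1)\le\lambda_1^{d}$, completing \eqref{eq:root-attenuation}.

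The only mildly delicate point is Step 2. A direct induction on $z(v_k)$ fails, because the recurrence subtracts $z(v_{k-1})$ and so needs a lower bound on it. The fix is to induct on the ratios $r_k$ instead, where monotonicity of $t\mapsto\alpha-1/t$ makes the induction go through.
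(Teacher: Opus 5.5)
Your proof is correct. Step~1 is exactly the paper's degree count at $w_i$. Step~3 differs only cosmetically: the paper bounds $p_{d_i}(\lambda_1(H_i))\le\lambda_1(H_i)^{d_i}\le\lambda_1^{d_i}$ directly, without first invoking monotonicity of $p_{d_i}$.

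The genuine difference is in Step~2. The paper does not use ratios. It proves by induction the exact identity
\[
z_i(v_{i,k})=p_k(\alpha_i)\,z_i(u_i)-\sum_{j=0}^{k-1}p_{k-j-1}(\alpha_i)\,s_j ,
\]
where $s_j\ge 0$ is the sum of $z_i$ over the neighbours of $v_{i,j}$ other than $v_{i,j\pm1}$. At $k=d_i$ it then discards the nonnegative slack $\sum_j p_{d_i-j-1}(\alpha_i)s_j$.

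So your closing remark is only half right. An induction on the values that uses inequalities does stall. An exact induction that carries the slack terms along (a discrete variation-of-constants formula) goes through with no monotonicity argument at all.

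Each route has something to offer. Your continued-fraction comparison of $r_k\le\alpha-1/r_{k-1}$ with $p_k/p_{k-1}=\alpha-p_{k-2}/p_{k-1}$ is equally short. It also gives the slightly stronger stepwise bound $z(v_k)/z(v_{k-1})\le p_k(\alpha)/p_{k-1}(\alpha)$ at every vertex of the path. The paper's identity instead records the loss in the inequality exactly.
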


\begin{proof}
For each $i\in\{1,2\}$, the eigenvalue\,--\,eigenvector equation at $w_i$ gives
$\lambda_1(H_i) \leq\deg_{H_i}(w_i)$. Since $\lambda_1(H_i) > q - 1$ from Lemma \ref{lem:petal-compression}, we have $\deg_{H_i}(w_i)\geq q$. If $d_i\geq1$,
no neighbor of $w_i$ other than $v_{i,d_i-1}$ lies on the chosen
shortest path. Hence $H_i$ has at least
$(d_i+1)+(q-1)=d_i+q$
vertices. If $d_i=0$, the degree bound $\deg_{H_i}(w_i)\geq \lambda_1(H_i)$ gives the estimate
$h_i\geq q+1$. Thus $e_i\geq 0$ in every case.

Now, we prove \eqref{eq:root-attenuation}. For $0\leq j<d_i$, define
\[
s_j:= \sum_{v\sim v_{i,j},\, v\notin\{v_{i,j-1}, v_{i, j+1}\}} z_i(v),
\]
where $v_{i,-1}$ is omitted when $j=0$. Let $b_j := z_i(v_{i,j})$, and set $b_{-1} := 0$.  
We claim that, for $1\leq k\leq d_i$,
\begin{equation}\label{eq:temporary-ineq-1}
b_k = p_k(\lambda_1(H_i)) z_i(u_i) - \sum_{j=0}^{k-1} p_{k-j-1}(\lambda_1(H_i)) s_j \quad (1\leq k\leq d_i),
\end{equation}
where $p_j$ is as defined in \eqref{eq:path-polynomials}. For $k=1$, this is clear.  If \eqref{eq:temporary-ineq-1} holds for $k$ and $k-1$, then
substitution in $b_{k+1}=\lambda_1(H_i) b_k - b_{k-1} - s_k$ and use of
$p_{\ell + 1} = \lambda_1(H_i) p_\ell-p_{\ell-1}$ gives the formula with $k+1$.
The claim therefore follows by induction.

Next, take $k=d_i$. Since $b_{d_i} = z_i(w_i)$, Equation \eqref{eq:temporary-ineq-1} gives
\begin{equation}\label{eq:source-continuant}
p_{d_i}(\lambda_1(H_i)) z_i(u_i) = z_i(w_i) + \sum_{j=0}^{d_i-1} p_{d_i-j-1}(\lambda_1(H_i)) s_j\geq z_i(w_i).
\end{equation}
$p_{d_i}(\lambda_1(H_i))\leq (\lambda_1(H_i))^{d_i}\leq\lambda_1^{d_i}$.
Since $1=\sum_vz_i(v)^2\leq h_i z_i(w_i)^2$, we have
$z_i(w_i)\geq h_i^{-1/2}$. Dividing \eqref{eq:source-continuant} by
$p_{d_i}(\lambda_1(H_i))$ proves both inequalities in
\eqref{eq:root-attenuation}.
\end{proof}

\begin{corollary}\label{coro:gap-lower-bound}
$\lambda_1 - \lambda_2 \geq 2\lambda_1^{-(n-2q-e)}/n$.
\end{corollary}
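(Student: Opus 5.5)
The plan is to combine Lemma \ref{lem:petal-compression} with the entry lower bounds of Lemma \ref{lem:root-attenuation}, and then convert the exponent using the definition $e_i=h_i-d_i-q$. Write $d=d_1+d_2$. By definition $d_i=h_i-q-e_i$, so
\[
d = h_1+h_2-2q-e.
\]
Since $h_1+h_2 = n-|Z|$ with $|Z|\in\{0,1\}$ by Lemma \ref{lem:no-zero-branches}, we get $d = n-2q-e$ when $Z=\emptyset$ and $d=n-1-2q-e$ when $|Z|=1$.

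\emph{Case $Z=\emptyset$.} Lemma \ref{lem:petal-compression} gives $\lambda_1-\lambda_2>z_1(u_1)z_2(u_2)$. By \eqref{eq:root-attenuation},
\[
z_1(u_1)z_2(u_2)\geq \frac{1}{\sqrt{h_1h_2}\,\lambda_1^{d_1+d_2}}.
\]
AM--GM gives $\sqrt{h_1h_2}\leq (h_1+h_2)/2=n/2$. Hence
\[
\lambda_1-\lambda_2 > \frac{2}{n\,\lambda_1^{d}} = \frac{2\lambda_1^{-(n-2q-e)}}{n},
\]
as required.

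\emph{Case $|Z|=1$.} Lemma \ref{lem:petal-compression} gives $\lambda_1-\lambda_2\geq (z_1(u_1)^2+z_2(u_2)^2)/\lambda_1$. Apply AM--GM in the form $a^2+b^2\geq 2ab$ and use the same product bound. This yields
\[
\lambda_1-\lambda_2\geq \frac{2}{\sqrt{h_1h_2}\,\lambda_1^{d+1}}.
\]
Here $h_1+h_2=n-1$, so $\sqrt{h_1h_2}\leq (n-1)/2<n/2$. Since $d+1=n-2q-e$, we obtain $\lambda_1-\lambda_2 \geq 4\lambda_1^{-(n-2q-e)}/(n-1)$, which is even stronger than needed. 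The extra factor $1/\lambda_1$ is exactly absorbed by the missing zero vertex.

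No step here is a real obstacle; it is bookkeeping. The points to check are these. First, $\lambda_1\geq 1$, so that replacing exponents is harmless. In fact $\lambda_1>\lambda_2>2$ by Proposition \ref{prop:t-greater-two-nontree}. Second, Lemma \ref{lem:root-attenuation} is applicable in both cases, including the requirement $\lambda_1(H_i)>q-1$. In the $|Z|=1$ case this holds because $\lambda_1(H_i)=\lambda_2>q-1$, as $q=\lceil\lambda_2\rceil$. Third, the exponent count $d=n-|Z|-2q-e$ must be done correctly.
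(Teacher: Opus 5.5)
Your proposal is correct and follows the paper's proof. Both combine Lemma \ref{lem:petal-compression} with \eqref{eq:root-attenuation}, use $d_1+d_2=n-|Z|-2q-e$, and obtain the stronger constant $4/(n-1)$ when $|Z|=1$; you just make explicit the AM--GM step $\sqrt{h_1h_2}\le (h_1+h_2)/2$ and the check $\lambda_1(H_i)>q-1$, which the paper leaves implicit.
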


\begin{proof}
First assume $Z = \emptyset$, then $d_1+d_2=n-2q-e$. The assertion follows from
Lemma \ref{lem:petal-compression} and \eqref{eq:root-attenuation}. If $Z \neq\emptyset$, then 
$d_1+d_2+1=n-2q-e$, and Lemma \ref{lem:petal-compression} gives the same estimate (in fact with the
slightly better constant $4/(n-1)$).
\end{proof}

We end this section with locating $q$. Define
\begin{equation}\label{eq:F-def}
F_n(x) := (n-2x)\log x, \qquad
M_n := \max_{x\geq 3} F_n(x).
\end{equation}
Simple algebra shows that $F_n'(x) = n/x - 2\log x - 2$, and
$F_n''(x) = -n/x^2 - 2/x < 0$.
Thus $F_n(x)$ is strictly concave, and its only critical point is its unique
maximum. Hence, the unique maximizer $x_n$ of \eqref{eq:F-def} satisfies
\begin{equation}\label{eq:xn-equation}
 \frac{n}{2x_n}=\log x_n+1,
 \qquad
 x_n\sim\frac{n}{2\log n}.
\end{equation}

\begin{lemma}\label{lem:finite-defect}
With the notation above,
\begin{equation}\label{eq:q-scale}
q = x_n + O \Big(\sqrt{\frac{n}{\log n}}\Big) \sim\frac{n}{2\log n}.
\end{equation}
Moreover, $e_1 + e_2\leq 2$.
\end{lemma}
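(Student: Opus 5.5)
We compare the lower bound of Corollary~\ref{coro:gap-lower-bound} with the upper bound of Lemma~\ref{lem:magnitudes-order}, after expressing both in terms of $q$.

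\emph{Step 1: $\lambda_1$ is close to $q$.} Since $\lambda_1-\lambda_2=\gamma_n=o(1)$, we have $\lambda_1\leq q+o(1)$. Then
\[
\log\lambda_1=\log q+O(1/q).
\]
Taking logarithms in Corollary~\ref{coro:gap-lower-bound} gives
\[
\log\gamma_n\geq -(n-2q-e)\log q-\log n+O\big((n-2q)/q\big).
\]

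\emph{Step 2: bound the size of $q$.} Before comparing, we need $q$ large enough that $n/q=O(\log n)$ and the error term is $O(\log n)$. Suppose instead that $q\leq n/\log^2 n$ (or even that $q$ is bounded). Then the crude bound $-(n-2q)\log q\geq -n\log q$ already beats $-n\log n+n\log\log n+O(n)$ by a margin of order $n\log\log n$. This contradicts Lemma~\ref{lem:magnitudes-order}. Since $q\leq n/2$ and $e\geq 0$, we get $\log q\gtrsim \log n-2\log\log n$, and hence $(n-2q)/q=O(\log^2 n)$. That error term is harmless.

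\emph{Step 3: comparison with $F_n$.} Combining the two bounds gives
\[
F_n(q)-e\log q\ \geq\ M_n-O(n)?
\]
This is too weak as it stands, so the upper bound must be sharpened to the optimal double kite. Choosing $r$ to be the integer nearest $x_n$ in Lemma~\ref{lem:DK-gap-upper}, with $s=n-2r$ and
\[
\log\sigma(r-1)=\log r-1/r+O(r^{-2}),
\]
gives
\[
\log\gamma_n\leq -F_n(x_n)+O(\log n).
\]
To see this, note that $(n-2r)\log(1-1/r)\approx -(n-2r)/r=O(\log n)$, and $F_n(r)-F_n(x_n)=O(F_n''\cdot 1)=O(\log n/n\cdot\ldots)$, which is small.

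On the lower side, $\log\lambda_1\leq \log q+O(1/q)$, so the error is $O(n/q)=O(\log n)$. Thus
\[
F_n(q)+e\log q\ \geq\ F_n(x_n)-O(\log n).
\]
By concavity, with $F_n''(x)\asymp -n/x^2\asymp -\log^2 n/n$ near $x_n$, we have
\[
F_n(q)\leq F_n(x_n)-c\,(q-x_n)^2\log^2 n/n.
\]
This needs care: the sign of $e\log q$ is favourable to the lower bound, so I must also use $h_i=d_i+q+e_i$. The sharper accounting is that $e$ extra vertices raise the exponent. In fact the total count is $n=h_1+h_2+|Z|$. So I would redo the bound with exponent $d_1+d_2=n-2q-e$, which yields $-(n-2q-e)\log q$. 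A large $e$ makes the lower bound larger, which is the wrong direction for contradicting. This means the bound $e\leq 2$ must come from a different source, namely a refined vertex count. A vertex $w_i$ of degree $\geq q$ sits with $\lambda_1(H_i)\geq\lambda_2$, and the extra $e_i$ vertices are wasted. The right move is to note that $p_{d_i}$ combined with the cost of the unused vertices gives slack, e.g.\ by comparing with the double kite on $n-e$ vertices padded into the path. I would bound via $\gamma_n\leq\gap(DK(r,n-2r))$ versus $\gamma_{n-e}$-type scaling: each extra path vertex multiplies the gap by about $1/q$. The lower bound then has exponent $n-2q-e$, while the best upper bound has exponent $n-2x_n$ up to $O(\log n)$. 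Hence
\[
F_n(q)-F_n(x_n)\geq e\log q\cdot(-1)\ldots
\]

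\emph{Main obstacle.} Rearranged correctly, this reads
\[
(n-2q)\log q-e\log q\ \geq\ F_n(x_n)-O(1)\cdot\log q,
\]
which gives both $c(q-x_n)^2\log^2n/n\leq O(\log n)$, so $q-x_n=O(\sqrt{n/\log n})$, and $e\log q\leq C\log q$. The hardest step is obtaining the constant $2$ exactly. This requires tracking every error term to $o(\log q)$: the factors $1/\sqrt{h_i}$, $2/n$, $4/r$, and the $\sigma$ versus $x$ corrections each contribute roughly $\log q$. I expect to need the sharper estimate $z_i(u_i)\geq z_i(w_i)/p_{d_i}$ together with $z_i(w_i)\gtrsim 1/\sqrt{q}$ to make the count close at $e\leq 2$.
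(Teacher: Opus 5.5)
Your first assertion follows the paper's argument. A double kite with clique size near $x_n$ gives $\log\gamma_n\le -M_n+O(\log n)$. Corollary~\ref{coro:gap-lower-bound}, with $e\ge 0$ discarded, gives $\log\gamma_n\ge -F_n(q)-O(\log n)$. Concavity of $F_n$ then finishes. You should say explicitly that concavity first traps $q$ in $[x_n/2,2x_n]$, because $F_n(x_n/2),F_n(2x_n)\le M_n-\Omega(n)$. Only after that can you invoke $|F_n''|\asymp \log^2 n/n$; your Step 2 alone gives only $q\ge n/\log^2 n$.

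The second assertion, $e_1+e_2\le 2$, is not proved. Your final inequality only yields $e=O(1)$, and you leave the exact constant as an open obstacle. Along the way you also reverse a sign. Increasing $e$ shrinks the exponent in $2\lambda_1^{-(n-2q-e)}/n$, so it \emph{raises} the lower bound on $\gamma_n$. That is exactly the direction that contradicts an upper bound. The correct inequality is $F_n(q)-e\log q\ge\cdots$, and no ``different source'' for the bound is needed.

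You cannot reach $2$ because you carry $\Theta(\log n)$ error terms with unspecified constants, while one unit of $e$ is worth only $\log q=(1+o(1))\log n$. Both losses are avoidable.

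\emph{Lower side.} Do not use $\lambda_1\le q+o(1)$, which costs $O(n/q)=O(\log n)$. Use $\lambda_1\le q+\gamma_n$ with $\gamma_n=o(n^{-K})$. Then $\lambda_1^{-(n-2q-e)}=(1-o(1))q^{-(n-2q-e)}$, so $\gamma_n\ge \frac{2-o(1)}{n}q^{-(n-2q-e)}$.

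\emph{Upper side.} The paper compares with $DK(q+1,n-2q-2)$, whose clique size is tied to $q$ itself rather than to $x_n$. By the first part, $n/q^2\to 0$, so $\sigma(q)^{-(n-2q-2)}=(1+o(1))q^{-(n-2q-2)}$. Lemma~\ref{lem:DK-gap-upper} then gives $\gamma_n\le (4+o(1))q^{-(n-2q-1)}$.

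The main terms now cancel exactly, leaving $q^{e-1}\le Cn$. If $e\ge 3$, this says $q^2=O(n)$, contradicting $q\sim n/(2\log n)$.

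Your $x_n$-kite can also be rescued, but only by tracking constants. The term $(n-2r)/r$ that you absorbed into $O(\log n)$ equals $2\log x_n+o(1)$. So the upper bound is really $-M_n+\log x_n+O(1)$. Combined with the one-sided fact $F_n(q)\le M_n$, this gives $(e-1)\log q\le \log n+O(1)$, hence again $e\le 2$.

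No sharper eigenvector estimates are needed. The factors $1/\sqrt{h_i}$ you worried about are already inside the constant $2/n$ of Corollary~\ref{coro:gap-lower-bound}.
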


\begin{proof}
We begin by choosing an integer $x = x_n + O(1)$.
Since $F_n'(x_n)=0$, Taylor's theorem gives
\[
F_n(x) = F_n(x_n) + O\Big(\sup_{\xi} |F_n''(\xi)|\Big) = M_n + o(1).
\]
Consequently, $(n-2x-2)\log\sigma(x)=F_n(x)-2\log x+O(n/x^2) = M_n-O(\log n)$.

Set $\delta:=\gap(G)$ for short, since $\delta$ is the minimum gap among connected $n$-vertex graphs, 
Lemma \ref{lem:DK-gap-upper} gives
\[
\delta
\leq \gap\bigl(DK(x+1,n-2x-2)\bigr)
\leq \frac{8}{x+1}\sigma(x)^{-(n-2x-2)}.
\]
Taking logarithms gives $\log\delta\leq -\log(x+1) -(n-2x-2)\log\sigma(x) + O(1)$.
It follows that $\log\delta\leq -M_n + O(\log n)$.
On the other hand, we use $\lambda_1=\lambda_2+\delta\leq q+\delta$; together with 
Lemma~\ref{lem:magnitudes-order}, we have $\log\lambda_1\leq\log q+O(\delta/q)$.
Furthermore, since $e\geq 0$, Corollary \ref{coro:gap-lower-bound} implies
$\delta\geq 2\lambda_1^{-(n-2q)}/n$. Taking logarithms gives
$\log\delta\geq-F_n(q)-O(\log n)$.
Combining this with $\log\delta\leq -M_n + O(\log n)$ yields
\begin{equation}\label{eq:F-near-max}
 F_n(q)\geq M_n-O(\log n).
\end{equation}

Now, put $x=tx_n$, then a direct calculation gives
\[
 \frac{M_n-F_n(tx_n)}{n}
 =t-1-\log t+\frac{1-t+t\log t}{\log x_n+1}.
\]
The right-hand side is bounded away from zero when $t$ remains away from $1$, and it tends
to infinity as $t\downarrow 0$. Moreover, for $t$ outside a fixed compact interval,
the same conclusion follows directly from the monotonicity of $F_n$ on
either side of $x_n$. Since \eqref{eq:F-near-max} has error term of only
$O(\log n)=o(n)$, it follows that $q/x_n\to1$.  In particular,
$q,x_n\in[x_n/2,2x_n]$ for all sufficiently large $n$. Applying
Taylor's theorem between $x_n$ and $q$, there exists some $\xi$ between them such that
$F_n(q) = F_n(x_n) + F_n''(\xi)(q-x_n)^2/2$. It follows that 
\[
F_n(x_n) - F_n(q) = -\frac{1}{2} F_n''(\xi)(q-x_n)^2
= \Omega\Big(\frac{\log^2 n}{n}\Big) (q-x_n)^2.
\]
Combining this with \eqref{eq:F-near-max} yields $|q - x_n| =O (\sqrt{n/\log n})$,
proving \eqref{eq:q-scale}. 

We now apply Lemma~\ref{lem:DK-equations} with $r=q+1$. Since
$\log\bigl(q/\sigma(q)\bigr) = q^{-2} + O(q^{-4})$ and
$n/q^2 = o(1)$, we have
\[
\delta\leq \frac{8}{q+1}\sigma(q)^{-(n-2q-2)}\leq 9q^{-(n-2q-1)}.
\]
On the other hand, Corollary \ref{coro:gap-lower-bound} together with $\lambda_1\leq q+\delta$ gives
$\delta\geq \frac{2-o(1)}n q^{-(n-2q-e)}$.
Comparing the above inequalities, we obtain
$q^{e-1}\leq 5n$ for all sufficiently large $n$. If $e\geq 3$, this would imply
$q^2\leq 5n$, contradicting \eqref{eq:q-scale}. Hence $e\leq 2$.
\end{proof}

\section{Reducing the structure of extremal graphs}
\label{sec:terminal-blocks}

In this section, we complete the structural reduction.  
We first recall some results obtained in the preceding sections. If $G$ is a
minimizer, and $q=\lceil \lambda_2\rceil$, then
\begin{equation}\label{eq:terminal-standing-scale}
q\sim\frac{n}{2\log n}, \qquad 
\lambda_1(G)-\lambda _2(G) = o(q^{-K})~\text{for any fixed } K.
\end{equation}

\subsection{Two perturbation lemmas}

Let $G_i$ be connected graphs with $r_i\in V(G_i)$ for $i=1,2$.
Let $G_e$ denote the graph obtained from the disjoint union of $G_1$ and $G_2$ by
adding the edge $r_1r_2$. Let $G_v$ be the graph obtained from $G_e$ by subdividing edge $r_1r_2$ with $v$ the new vertex.

\begin{lemma}\label{lem:uniform-two-pole}
Let $G_i$ be connected graphs with $r_i\in V(G_i)$ for $i=1,2$. 
Let $\bm{p}_i$ be the unit Perron vectors of $G_i$. Suppose that
\[
\gamma := \min_{i=1,2} (\lambda_1(G_i) - \lambda_2(G_i))\geq 8, \quad \text{and} \quad 
|\lambda_1(G_1) - \lambda_1(G_2)|\leq\frac{\gamma}{4}.
\]
\begin{enumerate}
\item[$(1)$] If $\theta_+ \geq\theta_-$ are two eigenvalues of
$G_e$ lying above $\max_i \lambda_2(G_i) + 1$, then 
$\theta_+ - \theta_- \geq 2\,p_1(r_1)p_2(r_2) \big(1 - O(\gamma^{-2})\big)$.

\item[$(2)$] If $G_1$ and $G_2$ are isomorphic, then $\theta_+ - \theta_- \leq
2\,p_1(r_1)^2\big(1 + O(\gamma^{-2})\big)$.
\end{enumerate}
\end{lemma}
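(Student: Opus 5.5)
The plan is to reduce everything to a secular equation in the two pole values and then compare the two resulting roots. Write $R_i(x)=(xI-A(G_i))^{-1}$ and $g_i(x):=\bm e_{r_i}^\top R_i(x)\bm e_{r_i}$, defined for $x>\lambda_1(G_i)$.

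First, every eigenvalue $\theta$ of $G_e$ that is not an eigenvalue of $G_1$ or $G_2$ satisfies
\[
g_1(\theta)\,g_2(\theta)=1 .
\]
To see this, take a $\theta$-eigenvector and restrict it to $G_i$. The restriction solves $(\theta I-A(G_i))\bm z_i=c_{3-i}\bm e_{r_i}$, where $c_j$ is the value at $r_j$. Hence $c_i=g_i(\theta)c_{3-i}$, and multiplying the two relations gives the equation.

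Next, on the window $W:=(\max_i\lambda_2(G_i)+1,\infty)$ we split off the Perron pole:
\[
g_i(x)=\frac{p_i(r_i)^2}{x-\lambda_1(G_i)}+\eta_i(x),\qquad \eta_i(x)=\sum_{k\ge2}\frac{z_k(r_i)^2}{x-\lambda_k(G_i)} .
\]
On $W$ the remainder satisfies $0<\eta_i(x)<1$, since $x-\lambda_k(G_i)>1$ for $k\ge 2$ and $\sum_k z_k(r_i)^2=1$.

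We then locate the eigenvalues in $W$. The secular function $g_1g_2$ is monotone on each interval between poles, so $W$ contains at most two eigenvalues of $G_e$ near $\lambda_1(G_1)$ and $\lambda_1(G_2)$. One lies above $\max_i\lambda_1(G_i)$. The other lies between them, or equals the common value in the isomorphic case, where it is an eigenvalue of $G_e$ with an antisymmetric eigenvector.

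The key rescaling is as follows. Assume for the moment that $\lambda_1(G_1)=\lambda_1(G_2)=:\lambda$, and put $t=\theta-\lambda$ and $a_i=p_i(r_i)$. The secular equation becomes
\[
(a_1^2+t\eta_1)(a_2^2+t\eta_2)=t^2 .
\]
We then bootstrap $|t|$ to be small. Since $|t|\le\gamma/2$ and the $\eta_i$ are bounded, we get $|t|\lesssim a_1a_2+|t|\max\eta$. The sharper estimate
\[
\eta_i(\theta)\le \frac{1}{\theta-\lambda_2(G_i)}\le \frac{1}{\gamma-|t|}=O(\gamma^{-1})
\]
then improves this to $t^2(1-O(\gamma^{-1}))=a_1^2a_2^2(1+O(|t|/\gamma))$.

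Solving this quadratic in $t$ gives two roots $t_\pm$. Their difference satisfies
\[
t_+-t_-=\frac{\sqrt{(a_1^2\eta_2+a_2^2\eta_1)^2+4a_1^2a_2^2(1-\eta_1\eta_2)}}{1-\eta_1\eta_2}.
\]
This is at least $2a_1a_2\sqrt{1-\eta_1\eta_2}\ge 2a_1a_2(1-O(\gamma^{-2}))$, because $\eta_1\eta_2=O(\gamma^{-2})$. This proves (1) when the Perron roots coincide.

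In the isomorphic case, $a_1=a_2=a$ and $\eta_1=\eta_2=\eta$. The formula reduces to $t_\pm=\pm a^2/(1\mp a^2\eta)$, but with $\eta$ evaluated at the two different points $\theta_\pm$. Since $\eta=O(\gamma^{-1})$ and $a^2\le1$, we have to be careful here. The better route uses the symmetric and antisymmetric decomposition. Then $\theta_\pm$ solve $g(\theta)=\pm1$, so $a^2/t_\pm=\pm1-\eta(\theta_\pm)$. This gives
\[
t_+-t_-=a^2\Big(\frac{1}{1-\eta_+}+\frac{1}{1+\eta_-}\Big).
\]
Expanding, $\frac{1}{1-\eta_+}+\frac{1}{1+\eta_-}=2+(\eta_+-\eta_-)+O(\eta^2)$. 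Since $\eta$ is Lipschitz with constant $O(\gamma^{-2})$ on the interval between $\theta_\pm$, and $|t_\pm|\le 1$, the difference $\eta_+-\eta_-$ is $O(\gamma^{-2})$. This gives (2).

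For (1) in general, we must handle $\lambda_1(G_1)\ne\lambda_1(G_2)$, using $|\lambda_1(G_1)-\lambda_1(G_2)|\le\gamma/4$. Write $\Delta$ for this difference. The equation becomes $(a_1^2+s_1\eta_1)(a_2^2+s_2\eta_2)=s_1s_2$, where $s_i=\theta-\lambda_1(G_i)$. Splitting $\Delta$ off only increases the root separation, because the roots of $s_1s_2=c$ lie at distance at least $\sqrt{\Delta^2+4c}\ge 2\sqrt c$ apart.

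I expect the main obstacle to be the justification that the perturbed equation preserves this separation up to the factor $1-O(\gamma^{-2})$, uniformly in $\Delta$. I would first try writing the equation as $s_1s_2=c(\theta)$ with $c(\theta):=g$-adjusted product. I would then show $c(\theta_\pm)\ge a_1^2a_2^2(1-O(\gamma^{-2}))$, which follows from $s_i\eta_i\ge0$ when $s_i\ge0$, with a correction of size $O(|s_i|/\gamma)\cdot a^2$ otherwise. Finally, a mean-value comparison of the monotone function $s_1s_2$ would give the lower bound on $\theta_+-\theta_-$.
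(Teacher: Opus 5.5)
Part (1) for non-isomorphic $G_1,G_2$ is not established, and the route you sketch for it cannot work. The claimed bound $c(\theta_\pm)\ge a_1^2a_2^2(1-O(\gamma^{-2}))$ for $c(\theta)=(a_1^2+s_1\eta_1)(a_2^2+s_2\eta_2)$ is false. The cross terms $a_2^2s_1\eta_1$ and $a_1^2s_2\eta_2$ have size $a_j^2|s_i|/\gamma$, so relative to $a_1^2a_2^2$ they are of size $|s_i|/(\gamma a_i^2)$. That is not $O(\gamma^{-2})$ unless $|s_i|\lesssim a_i^2/\gamma$, and in the paper's applications the $p_i(r_i)$ are tiny.

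Two concrete failures:
\begin{itemize}
\item If $\lambda_1(G_1)<\lambda_1(G_2)$ and $\Delta\,\eta_2(\lambda_1(G_1))>a_2^2$, then $g_2>0$ at $\lambda_1(G_1)$. So $\theta_-$ lies strictly between the two Perron roots, and $c(\theta_-)=s_1s_2<0$.
\item Even for $\Delta=0$, if $a_2\eta_1\gg a_1$, the roots are $t_+\approx a_2^2\eta_1$ and $t_-\approx -a_1^2/\eta_1$. Then $c(\theta_-)=t_-^2\ll a_1^2a_2^2$.
\end{itemize}
In the second case the separation $t_+-t_-\ge 2a_1a_2$ comes asymmetrically, by AM--GM. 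It does not come from both roots staying about $a_1a_2$ away from the poles, so no lower bound on $c$ at the roots can deliver it.

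The same mis-sizing causes further problems:
\begin{itemize}
\item Your bootstrap $|t|\lesssim a_1a_2+|t|\max_i\eta_i$ fails for this $t_+$.
\item Your equal-root formula for $t_+-t_-$ tacitly freezes the $\eta_i$, although they are evaluated at two different points.
\item Your localization is off. $g_1g_2$ is increasing where both factors are negative, and $\theta_-$ lies below both Perron roots when $\Delta\,\eta_2(\lambda_1(G_1))<a_2^2$. That at most two eigenvalues exceed $\max_i\lambda_2(G_i)+1$ follows instead from Weyl: $\lambda_3(G_e)\le\lambda_3(G_1\sqcup G_2)+1=\max_i\lambda_2(G_i)+1$.
\item For $G_e$ with isomorphic halves, the antisymmetric root lies strictly below the common Perron value. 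It equals that value only for the subdivided graph $G_v$.
\end{itemize}

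The missing idea is a formulation in which the diagonal shifts never compete with $a_1a_2$. The paper takes the Schur complement of $A(G_e)$ onto the span of the two Perron vectors. Then $\theta_\pm$ are the fixed points $t=\mu_\pm(t)$ of the $2\times2$ matrix $M(t)=D+K^\top R(t)K$. Here $D$ has diagonal $\lambda_1(G_i)$ and off-diagonal $a_1a_2$, and $R(t)$ is the resolvent of the compression $C$ to the orthogonal complement. The argument then runs in three steps:
\begin{itemize}
\item The eigenvalues of a symmetric $2\times2$ matrix differ by at least twice the off-diagonal entry, whatever the diagonal is. So $\Delta$ and all $\eta$-type shifts drop out at once.
\item The off-diagonal entry is $a_1a_2(1+O(\gamma^{-2}))$. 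This is because $C=C_0+W$ with $C_0$ block diagonal and $\|W\|\le1$, so by $R=R_0+R_0WR$ the cross-block term is at most $\|R_0\|\|W\|\|R\|\le 4/\gamma^2$.
\item The bound $\|M'(t)\|\le 4/\gamma^2$ transports the estimate from $\theta_-$ to $\theta_+$.
\end{itemize}

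If you want to keep the scalar secular equation, the analogous move is to complete the product rather than expand it. Set $k=1-\eta_1\eta_2$, $\rho_1=a_1^2\eta_2/k$ and $\rho_2=a_2^2\eta_1/k$. The equation is then exactly $(s_1-\rho_1)(s_2-\rho_2)=a_1^2a_2^2/k+\rho_1\rho_2\ge a_1^2a_2^2$. So with the $\eta_i$ frozen, the two roots are at least $2a_1a_2$ apart for every $\Delta$. You would still have to pass to the actual $\eta_i$, using $|\eta_i'|=O(\gamma^{-2})$ on $[\theta_-,\theta_+]$.

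Your isomorphic-case argument, by contrast, is sound and is a valid alternative to the paper's $J$-commutation proof. From $g(\theta_\pm)=\pm1$ you get $t_+-t_-=a^2\bigl(\frac{1}{1-\eta_+}+\frac{1}{1+\eta_-}\bigr)=2a^2(1+O(\gamma^{-2}))$ in both directions. This covers part (2) and the isomorphic instance of part (1).
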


\begin{proof}
For $i\in\{1,2\}$, put $n_i:=|V(G_i)|$ and
$b_i:=\bm{p}_i^{\top}\bm{e}_{r_i}=p_i(r_i)$.
Choose $U_i\in\mathbb{R}^{n_i\times(n_i-1)}$ such that
$[\bm{p}_i\ U_i]$ is an orthogonal matrix. Then
$U_i^{\top}U_i=I_{n_i-1}$, $U_i^{\top}\bm{p}_i=\bm{0}$, and
$U_iU_i^{\top} = I_{n_i}-\bm{p}_i\bm{p}_i^{\top}$.
Let $\bm{q}_i:= U_iU_i^{\top}\bm{e}_{r_i}$ and
$\widehat{\bm{q}}_i:=U_i^{\top}\bm{e}_{r_i}$.
We have $\bm{q}_i=U_i\widehat{\bm{q}}_i$, and
$\bm{e}_{r_i}=b_i\bm{p}_i+\bm{q}_i$.
Consequently,
$\|\widehat{\bm{q}}_i\|^2=\|\bm{q}_i\|^2=1-b_i^2\leq 1$.

Define $B_i:=U_i^{\top}A(G_i)U_i$.
Since $A(G_i)$ is symmetric and
$A(G_i)\bm{p}_i=\lambda_1(G_i)\bm{p}_i$, we obtain
\[
[\bm{p}_i\ U_i]^{\top}A(G_i)[\bm{p}_i\ U_i]
=
\begin{bmatrix}
\lambda_1(G_i) & 0 \\
0 & B_i
\end{bmatrix}.
\]
In particular, the eigenvalues of $B_i$ are
$\lambda_2(G_i),\ldots,\lambda_{n_i}(G_i)$.

Order the vertices of $G_e$ so that those of $G_1$ precede
those of $G_2$. Then
\[
A:=A(G_e)=
\begin{bmatrix}
A(G_1) & \bm{e}_{r_1}\bm{e}_{r_2}^{\top} \\
\bm{e}_{r_2}\bm{e}_{r_1}^{\top} & A(G_2)
\end{bmatrix}.
\]
Let $S:=\left[
\begin{smallmatrix}
\bm{p}_1 & 0 & U_1 & 0 \\
0 & \bm{p}_2 & 0 & U_2
\end{smallmatrix}\right]$.
The matrix $S$ is orthogonal. By direct multiplication, we obtain
\begin{equation}\label{eq:two-pole-block-form}
S^{\top}AS=
\begin{bmatrix}
D & K^{\top} \\
K & C
\end{bmatrix},
\end{equation}
where
\[
D=
\begin{bmatrix}
\lambda_1(G_1) & b_1b_2 \\
b_1b_2 & \lambda_1(G_2)
\end{bmatrix},
\qquad
K=
\begin{bmatrix}
0 & b_2\widehat{\bm{q}}_1 \\
b_1\widehat{\bm{q}}_2 & 0
\end{bmatrix},
\]
and
\[
C=C_0+W, \qquad
C_0=
\begin{bmatrix}
B_1 & 0 \\
0 & B_2
\end{bmatrix},
\qquad
W=
\begin{bmatrix}
0 & \widehat{\bm{q}}_1\widehat{\bm{q}}_2^{\top} \\
\widehat{\bm{q}}_2\widehat{\bm{q}}_1^{\top} & 0
\end{bmatrix}.
\]

(1) The two columns of $K$ are orthogonal, and hence
$\|K\| = \max\{b_1\|\widehat{\bm{q}}_2\|, b_2\|\widehat{\bm{q}}_1\|\}
\leq 1$. Moreover, $\|W\| = \|\widehat{\bm{q}}_1\|\,\|\widehat{\bm{q}}_2\|\leq 1$.
Put
\[
\beta:= \max_{i=1,2} \lambda_2(G_i), \quad
\alpha_{\min}:= \min_{i=1,2} \lambda_1(G_i), \quad
\alpha_{\max}:= \max_{i=1,2} \lambda_1(G_i).
\]
Weyl's inequality gives $\lambda_1(C)\leq\lambda_1(C_0) + \|W\|\leq\beta + 1$.
On the other hand, The assumptions give $\lambda_2(G_i)\leq\lambda_1(G_i) - \gamma$, and hence 
\begin{equation}\label{eq:two-pole-beta}
\beta
\leq\alpha_{\max}-\gamma
\leq\alpha_{\min}-\frac{3\gamma}{4}.
\end{equation}
By Weyl's inequality applied to $D$, $\lambda_2(D)\geq\alpha_{\min} - b_1b_2
\geq\alpha_{\min}-1$. Cauchy interlacing for $D$ and $C$ yields
$\lambda_2(A)\geq\lambda_2(D)$, and $\lambda_3(A)\leq\lambda_1(C)$.
Thus at most two eigenvalues of $A$ can lie above $\lambda_1(C)$.
Since $\theta_+$ and $\theta_-$ both exceed $\beta+1$, they are the two largest eigenvalues of $A$. Consequently,
$\theta_-\geq\alpha_{\min}-1$.
Combining this with $\lambda_1(C)\leq\beta+1$ and \eqref{eq:two-pole-beta} gives
\begin{equation}\label{eq:two-pole-distance}
\theta_--\lambda_1(C)
\geq\alpha_{\min}-1-(\beta+1)
\geq\frac{3\gamma}{4}-2
\geq\frac{\gamma}{2},
\end{equation}
where the last inequality uses $\gamma\geq 8$. 

For $t\geq\theta_-$, define $R(t) = (tI - C)^{-1}$, and
$R_0(t) = (tI - C_0)^{-1}$.
It follows from \eqref{eq:two-pole-beta} and
\eqref{eq:two-pole-distance} that
\begin{equation}\label{eq:two-pole-resolvent-bounds}
\|R(t)\| = \frac{1}{t - \lambda_1(C)}\leq\frac{2}{\gamma}, \qquad
\|R_0(t)\| = \frac{1}{t - \lambda_1(C_0)}\leq\frac{2}{\gamma}.
\end{equation}
Since $C=C_0+W$, we see $tI - C_0 = tI - C + W$. Multiply on the right by $R(t)=(tI-C)^{-1}$, we have 
$(tI - C_0)R(t) = (tI-C)R(t) + WR(t) = I + WR(t)$. Multiply on the left by $R_0(t) = (tI - C_0)^{-1}$, then
$R(t) = R_0(t) + R_0(t)WR(t)$. Since $R_0(t)$ is block diagonal,
$\langle\bm{q}_2, R_0(t)\bm{q}_1\rangle = 0$.
It follows from \eqref{eq:two-pole-resolvent-bounds} that
\begin{equation}\label{eq:two-pole-cross-term}
\big|\langle\bm{q}_2, R(t)\bm{q}_1\rangle\big| =
\big|\langle\bm{q}_2, R_0(t)WR(t)\bm{q}_1\rangle\big|
\leq\frac{4}{\gamma^2}. 
\end{equation}
For $t\geq\theta_-$, define the $2\times2$ matrix $M(t):= D + K^{\top} R(t)K$.
Since $tI - C$ is invertible, the Schur-complement identity applied to
\eqref{eq:two-pole-block-form} gives $\det(tI-A) = \det(tI-C)\det(tI - M(t))$.
Thus, for $t>\lambda_1(C)$,
\begin{equation}\label{eq:A-Mt-same-spectra}
t\in\mathrm{spec} (A) \quad\Longleftrightarrow\quad
t\in\mathrm{spec} (M(t)).
\end{equation}
Let $\mu_+(t)\geq\mu_-(t)$ be the eigenvalues of $M(t)$. Since
$M'(t) = - K^{\top}(tI - C)^{-2}K\preccurlyeq 0$,
both $\mu_+(t)$ and $\mu_-(t)$ are nonincreasing functions of
$t$. Moreover, by $\|K\|\leq 1$ and \eqref{eq:two-pole-resolvent-bounds}, we deduce
$\|M'(t)\|\leq\|K\|^2\|R(t)\|^2\leq 4/\gamma^2$. Hence
\begin{equation}\label{eq:two-pole-Lipschitz}
\|M(t)-M(s)\|\leq\frac{4}{\gamma^2} |t-s| \qquad (t,s\geq\theta_-).
\end{equation}
Observe that each equation $t=\mu_+(t)$, $t=\mu_-(t)$
has at most one solution, because its left-hand side is strictly increasing
while its right-hand side is nonincreasing. 
In view of \eqref{eq:A-Mt-same-spectra} and the fact that $A$ has exactly two eigenvalues above $\lambda_1(C)$, 
$\theta_+ = \mu_+(\theta_+)$ and $\theta_- = \mu_-(\theta_-)$.

We now estimate the off-diagonal entry of $M(t)$. Note that
\[
M_{12}(t) = [1, 0] M(t) [0, 1]^{\top} =
b_1b_2 + b_1b_2\langle\bm{q}_2, R(t)\bm{q}_1\rangle
= b_1b_2 \left(1 + \langle\bm{q}_2, R(t)\bm{q}_1\rangle\right).
\]
It follows from \eqref{eq:two-pole-cross-term} that
\begin{equation}\label{eq:two-pole-off-diagonal-bounds}
b_1b_2\left(1-\frac{4}{\gamma^2}\right)
\leq |M_{12}(t)|
\leq
b_1b_2\left(1+\frac{4}{\gamma^2}\right).
\end{equation}
Observe that for any real symmetric $2\times2$ matrix,
the difference between its two eigenvalues is at least two times its $(1,2)$-entry in absolute value.
Applying this to $M(\theta_-)$ gives
\begin{equation}\label{eq:two-pole-effective-gap}
\mu_+(\theta_-)-\mu_-(\theta_-)
\geq
2b_1b_2\left(1-\frac{4}{\gamma^2}\right).
\end{equation}
Let $\Delta = \theta_+ - \theta_-$. By \eqref{eq:two-pole-Lipschitz},
$\mu_+(\theta_+)\geq\mu_+(\theta_-) - 4\Delta/\gamma^2$.
Using $\theta_+ = \mu_+(\theta_+)$ and $\theta_- = \mu_-(\theta_-)$, we get
\[
\theta_+ = \mu_+(\theta_+)\geq
\mu_+(\theta_-)-\frac{4}{\gamma^2}\Delta
= \theta_- + \bigl(\mu_+(\theta_-)-\mu_-(\theta_-)\bigr)
 -\frac{4}{\gamma^2}\Delta.
\]
Therefore $(1 + 4/\gamma^2)\Delta\geq\mu_+(\theta_-) - \mu_-(\theta_-)$.
Together with \eqref{eq:two-pole-effective-gap}, this yields
\[
\theta_+-\theta_-
\geq
2b_1b_2
\frac{1-4\gamma^{-2}}{1+4\gamma^{-2}}
=
2b_1b_2\bigl(1-O(\gamma^{-2})\bigr),
\]
which proves $(1)$.

(2) By identifying $G_1$ and $G_2$ via an isomorphism if necessary, we may
assume that $A(G_1) = A(G_2)$ and $\bm{e}_1 = \bm{e}_2$. Consequently,
$\lambda_1(G_1) = \lambda_1(G_2)$, $b_1 = b_2 =:b$, $B_1 = B_2$, and
$\bm{q}_1 = \bm{q}_2$. Let $J =
\left[\begin{smallmatrix}
0 & 1 \\
1 & 0
\end{smallmatrix}\right]$,
$\widehat{J} =
\left[\begin{smallmatrix}
0 & I \\
I & 0
\end{smallmatrix}\right]$.
It follows that $JD = DJ$, $\widehat{J} C = C\widehat{J}$, $\widehat{J} K = KJ$,
which implies that $JM(t) = M(t)J$. Therefore $M(t)$ has the form
\[
M(t)=
\begin{bmatrix}
a(t)&h(t)\\
h(t)&a(t)
\end{bmatrix},
\]
and hence $\mu_+(t) - \mu_-(t) = 2|h(t)| = 2|M_{12}(t)|$.
Since $\mu_+$ is nonincreasing,
\[
\theta_+ - \theta_- = \mu_+(\theta_+) - \mu_-(\theta_-)
\leq\mu_+(\theta_-) - \mu_-(\theta_-) = 2|M_{12}(\theta_-)|.
\]
The upper bound in \eqref{eq:two-pole-off-diagonal-bounds} now gives
$\theta_+ - \theta_-\leq 2b^2 (1 + 4/\gamma^2) = 2b_1^2\big(1 + O(\gamma^{-2})\big)$.
This finishes the proof of (2).
\end{proof}

For the case of two petals joined at a single vertex (i.e., $|Z| = 1$), 
we need the following result.

\begin{lemma}\label{lem:zero-parity}
Let $G_i$ be connected graphs with $r_i\in V(G_i)$ for $i=1,2$. 
Let $\gamma$ and $\bm{p}_i$ be as Lemma \ref{lem:uniform-two-pole}. Suppose that
$\lambda_1(G_1) = \lambda_1(G_2) = \theta$, 
$\theta - \max_i\lambda_2(G_i)\geq\gamma$, and 
$\theta = \Theta(\gamma)\to\infty$.
Then $\theta$ is the second largest eigenvalue of $G_v$, and
\begin{equation}\label{eq:zero-parity-gap}
\lambda_1(G_v) - \lambda_2(G_v) = \frac{p_1(r_1)^2 + p_2(r_2)^2}{\theta}
\left(1 + O(\theta^{-2})\right).
\end{equation}
\end{lemma}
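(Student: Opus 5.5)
I would mirror the proof of Lemma \ref{lem:uniform-two-pole}, now with an extra coordinate for the subdivision vertex $v$. Keep the notation $b_i=p_i(r_i)$, $U_i$, $\widehat{\bm q}_i$, $B_i$ from that proof, and order the vertices of $G_v$ as $V(G_1)$, $v$, $V(G_2)$. The orthogonal change of basis
\[
S=\begin{bmatrix}\bm p_1&0&0&U_1&0\\0&1&0&0&0\\0&0&\bm p_2&0&U_2\end{bmatrix}
\]
brings $A(G_v)$ to the block form $\left[\begin{smallmatrix}D&K^\top\\K&C\end{smallmatrix}\right]$, with
\[
D=\begin{bmatrix}\theta&b_1&0\\b_1&0&b_2\\0&b_2&\theta\end{bmatrix},\qquad
C=\begin{bmatrix}B_1&0\\0&B_2\end{bmatrix}.
\]
The block $K$ couples only the middle coordinate $v$ to $\widehat{\bm q}_1,\widehat{\bm q}_2$. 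Unlike the edge case, $C$ is now exactly block diagonal: there is no $W$ term, because $G_1$ and $G_2$ are no longer adjacent. Hence $\lambda_1(C)\le\beta:=\max_i\lambda_2(G_i)\le\theta-\gamma$.

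\emph{Step 1 (locating the top eigenvalues).}
\begin{itemize}
\item The vector $\bm p_1\oplus 0\oplus(-\bm p_2)$, suitably weighted by $b_2$ and $b_1$, is an exact eigenvector of $A(G_v)$ for $\theta$: it vanishes at $v$, and at $v$ the equation reads $b_2b_1-b_1b_2=0$.
\item By interlacing with $G_v-v=G_1\sqcup G_2$, we get $\lambda_3(G_v)\le\beta<\theta$ and $\lambda_2(G_v)\ge\theta$.
\item Since $G_v$ is connected and properly contains $G_1$, $\lambda_1(G_v)>\theta$.
\end{itemize}
Together these show that $\theta=\lambda_2(G_v)$.

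\emph{Step 2 (reducing $\lambda_1$ to a scalar equation).} For $t>\lambda_1(C)$, the Schur complement gives $t\in\operatorname{spec}A\iff t\in\operatorname{spec}M(t)$, where $M(t)=D+K^\top R(t)K$. Since $R(t)$ is block diagonal and $K$ feeds only the $v$-coordinate, the only corrected entry is the middle diagonal one:
\[
M_{vv}(t)=\sum_i\widehat{\bm q}_i^\top(tI-B_i)^{-1}\widehat{\bm q}_i=:\epsilon(t),\qquad 0\le\epsilon(t)\le \frac{1}{t-\beta}\le\frac{1}{\gamma}
\]
for $t\ge\theta$. It is cleaner to eliminate directly using the resolvents of $G_i$. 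Writing $g_i(t)=\bm e_{r_i}^\top(tI-A(G_i))^{-1}\bm e_{r_i}$, the eigenvalues of $G_v$ above $\lambda_1(G_i)$ are the roots of
\[
t=g_1(t)+g_2(t).
\]
Spectrally decomposing,
\[
g_i(t)=\frac{b_i^2}{t-\theta}+\eta_i(t),\qquad 0\le\eta_i(t)\le\frac{1}{t-\theta+\gamma}.
\]
Setting $\Delta=\lambda_1(G_v)-\theta>0$, the equation becomes
\[
(\theta+\Delta-\eta_1-\eta_2)\,\Delta=b_1^2+b_2^2 .
\]
A crude bound gives $\Delta\le 2/\theta$, so $\Delta=O(1/\theta)$ and $\eta_i=O(1/\gamma)=O(1/\theta)$ using $\theta=\Theta(\gamma)$. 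Therefore
\[
\Delta=\frac{b_1^2+b_2^2}{\theta}\bigl(1+O(\theta^{-2})\bigr),
\]
since $\Delta+\eta_1+\eta_2=O(1/\theta)$ is a relative error of order $\theta^{-2}$.

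\emph{Main obstacle.} The delicate point is making the error relative at order $\theta^{-2}$ rather than $\theta^{-1}$. This needs $\eta_i(t)\le 1/\gamma$, which relies on the hypothesis $\theta-\max_i\lambda_2(G_i)\ge\gamma$, together with the a priori bound $\Delta=O(1/\theta)$. The latter follows from $b_i\le1$ and monotonicity of the right-hand side: $\Delta\le (b_1^2+b_2^2)/(\theta-2/\gamma)$.

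I would also check that the secular equation has a unique root above $\theta$. This holds because the left side is increasing in $t$ and $g_1+g_2$ is decreasing, and this confirms that the root is $\lambda_1(G_v)$.
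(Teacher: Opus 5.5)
Your Step 2 is the paper's argument essentially verbatim. It uses the secular equation $t=g_1(t)+g_2(t)$, the split $g_i(\theta+\Delta)=b_i^2/\Delta+\eta_i$ with $0\le\eta_i\le 1/\gamma$, the a priori bound $\Delta=O(\theta^{-1})$, and the resulting $1+O(\theta^{-2})$ relative error. The $3\times 3$ block reduction you start with is consistent and leads to the same equation $(t-\epsilon(t))(t-\theta)=b_1^2+b_2^2$ with $\epsilon=\eta_1+\eta_2$, but you abandon it. Note that the bound there should be $\epsilon(t)\le 2/(t-\beta)$, since each $\|\widehat{\bm q}_i\|^2\le 1$; this is harmless because the bound is never used. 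Getting $\lambda_1(G_v)>\theta$ from the fact that $G_v$ properly contains the connected graph $G_1$ is a slightly quicker alternative to the paper's sign-changing-eigenvector argument.

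The one real error is in Step 1. Deleting the single vertex $v$ does not give $\lambda_3(G_v)\le\beta$. The graph $G_1\sqcup G_2$ has top eigenvalues $\theta,\theta,\beta$, so Cauchy interlacing yields only $\beta\le\lambda_3(G_v)\le\theta$ and $\lambda_4(G_v)\le\beta$. In general $\lambda_3(G_v)>\beta$. For example, with $G_1=G_2=K_m$ we have $\beta=-1$, yet the symmetric quotient $(x-m+2)(x^2-2)-(m-1)x$ has a root in $(0,\sqrt2)$, which is an eigenvalue of $G_v$. So your deduction of $\lambda_2(G_v)=\theta$ rests on a false premise.

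The fix is the other half of the same interlacing, which is exactly what the paper uses:
$\theta=\lambda_2(G_v-v)\le\lambda_2(G_v)\le\lambda_1(G_v-v)=\theta$.
This gives $\lambda_2(G_v)=\theta$ and shows at most one eigenvalue lies above $\theta$, with no bound on $\lambda_3$ needed. With that repair, your proof is complete.
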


\begin{proof}
For each $i\in\{1,2\}$, let $b_i:=p_i(r_i)$, $A_i:=A(G_i)$, and let $\bm{p}_i$ be a unit Perron vector of
$A_i$, so that $A_i \bm{p}_i = \theta \bm{p}_i$. 
Since $G_i$ is connected, the Perron--Frobenius theorem implies that
$\theta$ is a simple eigenvalue of $A_i$, $\bm{p}_i$ can be chosen strictly
positive, and hence $b_i>0$. 

Deleting the vertex $v$ from $G_v$ leaves the disjoint union $G_1\sqcup G_2$,
whose two largest eigenvalues are both equal to $\theta$. By Cauchy's interlacing theorem,
we have $\lambda_1(G_v)\geq \theta\geq\lambda_2(G_v)\geq\theta$.
Consequently, $\lambda_2(G_v)=\theta$,
and at most one eigenvalue of $G_v$ lies strictly above $\theta$.

With the vertices ordered as $V(G_1),v,V(G_2)$, the adjacency matrix of
$G_v$ is
\[
A(G_v) =
\begin{bmatrix}
A_1 & \bm{e}_{r_1} & 0 \\
\bm{e}_{r_1}^{\top} & 0 & \bm{e}_{r_2}^{\top} \\
0 & \bm{e}_{r_2} & A_2
\end{bmatrix}.
\]
Consider the vector $\bm{z}$ with $\bm{z}^{\top} = [b_2\bm{p}_1^{\top}, 0, -b_1\bm{p}_2^{\top}]$.
On $G_1$ and $G_2$, respectively, we have
$A_1(b_2 \bm{p}_1) = \theta b_2 \bm{p}_1$, and $A_2(-b_1 \bm{p}_2) = -\theta b_1 \bm{p}_2$.
At the vertex $v$, the corresponding coordinate of $A(G_v)\bm{z}$ is
$\bm{e}_{r_1}^{\top} (b_2\bm{p}_1) + \bm{e}_{r_2}^{\top}(-b_1\bm{p}_2)
= b_2b_1 - b_1b_2 = 0$. Thus $\bm{z}$ is an eigenvector of $G_v$ with eigenvalue $\theta$.
Moreover, $G_v$ is connected, whereas the $\theta$-eigenvector $\bm{z}$
changes sign and vanishes at $v$. It therefore cannot be a Perron
vector of $G_v$. The Perron--Frobenius theorem then shows that
$\lambda_1(G_v) > \theta$. Hence there is exactly one eigenvalue above
$\theta$.

We next determine this eigenvalue. For $x>\theta$, let
$g_i(x) = \bm{e}_{r_i}^{\top} (xI - A_i)^{-1} \bm{e}_{r_i}$.
Suppose that $x>\theta$ is an eigenvalue of $G_v$, with eigenvector
$\bm{w}$, where $\bm{w}^{\top} = [\bm{w}_1^{\top}, \alpha, \bm{w}_2^{\top}]$.
The eigenvalue--eigenvector equation on $G_i$ is
$(xI - A_i)\bm{w}_i = \alpha \bm{e}_{r_i}$.
Since $x > \lambda_1(A_i) = \theta$, the matrix $xI - A_i$ is invertible, and hence
$\bm{w}_i = \alpha(xI - A_i)^{-1} \bm{e}_{r_i}$.
Moreover, $\alpha\neq 0$, since otherwise the preceding equations would
give $\bm{w}_1 = \bm{w}_2 = 0$. The eigenvalue\,--\,eigenvector equation at $v$ now becomes
\[
x\alpha = \bm{e}_{r_1}^{\top} \bm{w}_1 + \bm{e}_{r_2}^{\top} \bm{w}_2
= \alpha\big( g_1(x) + g_2(x)\big).
\]
Dividing by $\alpha$ gives the equation $x = g_1(x) + g_2(x)$.
This equation has exactly one solution above $\theta$.
Indeed, $g_i'(x) = -\bm{e}_{r_i}^{\top} (xI - A_i)^{-2} \bm{e}_{r_i} < 0$,
so $g_1(x) + g_2(x) - x$ is strictly decreasing. Furthermore, by the spectral decomposition of $A_i$, 
$g_i(x)\sim\frac{b_i^2}{x-\theta}$ as $x\downarrow\theta$,
and hence $g_1(x)+g_2(x)-x\to+\infty$ as $x\downarrow\theta$. On the
other hand, $g_i(x)=O(x^{-1})$ as $x\to\infty$, so
$g_1(x)+g_2(x)-x\to-\infty$. Thus the solution is unique and must equal
$\lambda_1(G_v)$.

Set $\lambda_1(G_v) = \theta + \delta$, where $\delta > 0$.
Let $\mu_{ij}$ and $\bm{z}_{ij}$ be the eigenvalues and an orthonormal eigenbasis of $A_i$, with
$\mu_{i1} = \theta$, $\bm{z}_{i1} = \bm{z}_i$, and $\mu_{ij}\leq\lambda_2(G_i)\leq\theta - \gamma$
($j\geq 2$). The spectral decomposition for $A_i$ gives
\[
g_i(\theta+\delta) = \sum_{j\geq 1} \frac{\left|\langle \bm{e}_{r_i}, \bm{z}_{ij}\rangle\right|^2}{\theta+\delta-\mu_{ij}}
= \frac{b_i^2}{\delta} + \sum_{j\geq 2} \frac{\left|\langle \bm{e}_{r_i}, \bm{z}_{ij}\rangle\right|^2}{\theta+\delta-\mu_{ij}}.
\]
Define the second sum to be $h_i(\delta)$. All its summands are
nonnegative, and, for $j\geq 2$,
$\theta + \delta - \mu_{ij}\geq\gamma + \delta\geq\gamma$.
Since $\sum_{j\geq 2} \left|\langle \bm{e}_{r_i}, \bm{z}_{ij}\rangle\right|^2 = 1 - b_i^2\leq 1$,
we obtain the slightly sharper estimate
$0\leq h_i(\delta)\leq 1/\gamma$. Thus $g_i(\theta + \delta) = b_i^2/\delta + h_i(\delta)$.
It follows that $\theta + \delta = (b_1^2 + b_2^2)/\delta + h_1(\delta) + h_2(\delta)$,
or equivalently 
\begin{equation}\label{eq:temporary-1}
\delta\big(\theta + \delta - h_1(\delta) - h_2(\delta)\big) = b_1^2 + b_2^2.
\end{equation}
In light of $\gamma = \Theta(\theta)$ and $h_i(\delta) \leq 1/\gamma$, we see
$\theta + \delta - h_1(\delta) - h_2(\delta)\geq\theta - 2/\gamma
\geq\theta/2$ for all sufficiently large $\theta$. 
It follows from \eqref{eq:temporary-1} that $0 < \delta\leq 2(b_1^2 + b_2^2)/\theta
= O(\theta^{-1})$.
Together with $h_i(\delta)\leq\gamma^{-1}=O(\theta^{-1})$, this yields
$\theta + \delta - h_1(\delta) - h_2(\delta) = \theta + O(\theta^{-1})
= \theta\left(1 + O(\theta^{-2})\right)$. Consequently, we obtain
\[
\delta = \frac{b_1^2+b_2^2}{\theta\left(1+O(\theta^{-2})\right)}
= \frac{b_1^2+b_2^2}{\theta} \left(1+O(\theta^{-2})\right).
\]
Together with $\lambda_2(G_v)=\theta$ and
$\lambda_1(G_v) = \theta+\delta$, this completes the proof.
\end{proof}

\subsection{Rough structure}

In this subsection, we describe the coarse structure of $G$. To do this we give a precise formula for $\gap(G)$ in terms of eigenvector entries and then show that this implies $G$ must have structure quite similar to a double kite. 

\begin{theorem}\label{thm:terminal-extraction}
There are two disjoint vertex sets $V_1\subseteq V(H_1)$, $V_2\subseteq V(H_2)$ 
with $w_i\in V_i$, such that the induced subgraphs
$B_i:=H_i[V_i]$ have the following properties.

\begin{enumerate}
\item[\textnormal{(1)}] For each $i\in\{1,2\}$, $B_i$ is connected, and either $B_i=H_i$ or
$B_i\subsetneq H_i$. In the latter case, there are vertices 
$\widehat{u}_i\in V(B_i)$, $\widehat{v}_i\in V(H_i)\setminus V(B_i)$ such that
$E_{H_i} (V(B_i), V(H_i)\setminus V(B_i)) = \{\widehat u_i \widehat{v}_i\}$. 

\item[\textnormal{(2)}]
$q\leq |V(B_i)|\leq q + 8$. Moreover, $\operatorname{dist}_{B_i}(\widehat u_i,w_i)\leq 6$ and
$\max_{v\in V(B_i)} \operatorname{dist}_{B_i}(\widehat u_i,v)\leq 11$.

\item[\textnormal{(3)}]
The induced subgraph $C:= G[V(G)\setminus (V(B_1)\cup V(B_2))]$
is connected, $\widehat{v}_1, \widehat{v}_2\in V(C)$, and
\[
E_G(V(B_1)\cup V(B_2), V(C)) = \{\widehat{u}_1\widehat{v}_1, \widehat{u}_2\widehat{v}_2\}.
\]
Furthermore, $|V(C)| = n-m_1-m_2 = \Theta(q\log q)$ and $\Delta(C)\leq 4$.
\end{enumerate}
\end{theorem}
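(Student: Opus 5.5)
The plan is to extract $B_i$ as a dense core of $H_i$ around the Perron maximizer $w_i$, and then use the tight bound $e_1+e_2\le 2$ from Lemma \ref{lem:finite-defect} to force everything else into a thin corridor.

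\emph{Step 1: the dense part of $H_i$.} From the proof of Lemma \ref{lem:root-attenuation}, $\deg_{H_i}(w_i)\ge q$. The shortest path $v_{i,0},\dots,v_{i,d_i}$ together with the $q-1$ further neighbours of $w_i$ already uses $d_i+q$ vertices. Since $h_i=d_i+q+e_i$ with $e_i\le 2$, all but at most two vertices of $H_i$ lie on this path or in $N(w_i)$. Next I would revisit \eqref{eq:source-continuant}. There every side-load $s_j$ enters with weight $p_{d_i-j-1}(\lambda_1(H_i))\approx q^{d_i-j-1}$. Because the upper bound $9q^{-(n-2q-1)}$ and the lower bound $\frac{2-o(1)}{n}q^{-(n-2q-e)}$ agree up to a factor $O(nq^{e-1})$, any $s_j\ge q^{-1}z_i(w_i)$ with $d_i-j$ large would multiply the lower bound by a superpolynomial factor and give a contradiction. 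So the side-loads are negligible except within $O(1)$ steps of $w_i$. Counting degrees along the path (the eigen-equation at $v_{i,j}$ with $z_i(v_{i,j})\approx q^{-(d_i-j)}z_i(w_i)$) then shows two things: vertices at distance $\ge 7$ or so from $w_i$ along the path have at most two path-neighbours plus $O(1)$ extra neighbours, and $N(w_i)$ spans a dense subgraph.

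\emph{Step 2: defining $B_i$ and the cut edge.} I would let $j^*$ be the largest index such that $v_{i,j^*}$ still has an extra neighbour or is within bounded distance of $N[w_i]$. Then set $V_i$ to be $N[w_i]$, together with the path from $v_{i,j^*}$ to $w_i$ and the at most $e_i$ leftover vertices, closed under the vertices lying in nontrivial blocks there. Set $\widehat u_i=v_{i,j^*}$ and $\widehat v_i=v_{i,j^*-1}$, or $B_i=H_i$ if $j^*=0$. The cut-edge property in (1) holds because, below $j^*$, all vertices of $H_i$ are path vertices of small degree whose only neighbours are path neighbours. Otherwise some vertex off the path would increase $e_i$, or add a side-load far from $w_i$, which Step 1 excludes. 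The size bound $q\le|V_i|\le q+8$ and the distance bounds $6$ and $11$ come from bounding $d_i-j^*$ by a constant. I expect to do this by comparing $\gap(G)$ with the double-kite bound: each extra vertex kept in the core costs a factor of about $q$. Part (2) is therefore bookkeeping of constants once the $O(1)$ bounds are in place.

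\emph{Step 3: the corridor $C$.} $C$ consists of the path portions $v_{i,0..j^*-1}$, the joining edge or vertex $w$ ($|Z|\le1$ by Lemma \ref{lem:no-zero-branches}), and at most $e\le2$ stray vertices. It is therefore connected, it meets $B_1\cup B_2$ only through the two cut edges $\widehat u_i\widehat v_i$, and $|V(C)|=n-|V_1|-|V_2|=n-2q-O(1)=\Theta(q\log q)$ by \eqref{eq:q-scale}. For $\Delta(C)\le4$, a vertex of $C$ has at most two path neighbours plus at most $e\le 2$ stray ones.

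\emph{Main obstacle.} The hard step is the quantitative claim in Step 1, namely that any side-load at distance $\ge c$ from $w_i$ loses a factor that the $O(nq^{e-1})$ slack cannot absorb. To make it sharp I would use the exact gap asymptotics of Lemmas \ref{lem:uniform-two-pole} and \ref{lem:zero-parity}, applied with $G_1=H_1$ and $G_2=H_2$ (taking $\gamma$ of order $q$ via interlacing with the clique-like core). These give $\gap(G)\approx 2z_1(u_1)z_2(u_2)$, or $(z_1(u_1)^2+z_2(u_2)^2)/\lambda_2$ when $|Z|=1$. With $\gap(G)$ in this form, the multiplicative loss from each perturbation can be tracked precisely.
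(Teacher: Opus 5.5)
Your Step~1 observation is exactly the input the paper uses: all but $e_i\le 2$ vertices of $H_i$ lie on $P_i$ or in $N(w_i)$. Step~2, however, has a genuine gap.

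\emph{The side-load argument cannot support Step~2.} Take an off-path vertex $f$ attached to $v_{i,k}$ with $d_i-k$ large. It is already counted in $e_i\le2$. Its side-load is of order $q^{-(d_i-k)-1}z_i(w_i)$, so it contributes only $O(q^{-2})z_i(w_i)$ to \eqref{eq:source-continuant}. Nothing available at this stage rules such a vertex out. Indeed, this is why the theorem only asserts $\Delta(C)\le 4$, and your own Step~3 keeps such vertices in $C$. Two consequences follow:
\begin{itemize}
\item Your claim that below $j^*$ every vertex is a path vertex with only path neighbours is unjustified.
\item Putting ``the at most $e_i$ leftover vertices'' into $V_i$ unconditionally can break the connectivity of $B_i$, or the bounds $|V(B_i)|\le q+8$ and the distance bounds $6$ and $11$.
\end{itemize}

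\emph{$d_i-j^*$ cannot be bounded by comparison with the double-kite gap.} The path vertices $v_{i,j^*},\dots,v_{i,d_i}$ are counted in $d_i$, not in $e_i$. Keeping them in the core therefore costs nothing in that comparison. (As written, ``largest index'' would also just give $j^*=d_i$.)

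\emph{The missing idea is a purely combinatorial closure governed by the shortest-path property.} Put $L=N_{H_i}(w_i)\setminus V(P_i)$ and $F=V(H_i)\setminus(V(P_i)\cup L)$, so $|F|\le e_i$. Start with $S=L\cup\{w_i\}$ and $j=d_i$, and repeat:
\begin{itemize}
\item[(i)] add any vertex of $F$ adjacent to $S$;
\item[(ii)] if an off-path vertex of $S$ is adjacent to $v_{i,k}$ with $k<j$, lower $j$ to the least such $k$ and add $v_{i,k},\dots,v_{i,d_i}$.
\end{itemize}

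A vertex of $L$ adjacent to $v_{i,k}$ forces $d_i-k=\dist(v_{i,k},w_i)\le 2$. After $r$ vertices of $F$ have been absorbed, each is joined to $w_i$ inside $S$ by a path of length at most $2r+1$. By induction, $d_i-j\le 2r+2\le 2e_i+2\le 6$.

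At termination:
\begin{itemize}
\item no edge leaves $S$ towards $F$, by (i);
\item no edge goes from an off-path vertex of $S$ to some $v_{i,k}$ with $k<j$, by (ii);
\item a chord $v_{i,a}v_{i,b}$ with $a-b\ge2$ contradicts shortestness.
\end{itemize}
So $v_{i,j-1}v_{i,j}$ is the unique edge leaving $S$ in $H_i$, and the size and distance bounds then follow by counting.

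Unabsorbed $F$-vertices remain in $C$. Each sees at most three consecutive path vertices, again by shortestness, not two as in your Step~3. It also sees at most one other stray vertex, which gives $\Delta(C)\le4$. None of the exact two-pole asymptotics from your last paragraph is needed: $e_1+e_2\le2$ already carries all the spectral information.
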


\begin{proof}
We first recall the notation used in the previous section. 
Let $H_1:=G[P]$ and $H_2:=G[N]$, and let $\bm{z}_i$ be the unit Perron vector 
of $H_i$ for $i\in\{1,2\}$.
When $Z=\emptyset$, the edge $u_1u_2$ is the unique edge joining $H_1$ and $H_2$, 
where $u_i\in V(H_i)$. When $|Z|=1$, let $w$ denote the unique vertex of $Z$; 
in this case, $wu_i$ is the unique edge joining $w$ to $H_i$. 
Finally, for each $i\in\{1,2\}$, let $w_i\in V(H_i)$ be a vertex at which 
$\bm{z}_i$ attains its maximum coordinate.

Fix $i\in\{1,2\}$. To simplify notation, we suppress the index, write
$H := H_i$, $u:=u_i$, $w:=w_i$ and $d:=d_i$, and again use $\alpha_i = \lambda_1(H_i)$. Let $P$ denote a shortest path $u = v_0v_1\cdots v_d=w$,
and $R:=V(H)\setminus V(P)$, $e_i=e^\ast$. Define $L := N_H(w)\cap R$, $F := R\setminus L$. Recall that $e* = |V(H)| - d - q$.

The eigenvalue\,--\,eigenvector equation at $w$ gives
$\alpha_i\leq d_H(w)$. Since $\alpha_i > q - 1$ by Lemma \ref{lem:petal-compression}, we have
$d_H(w)\geq q$. At most one neighbor of $w$ lies on $P$, so
$|L|\geq q-1$. Because $|R|=q+e^\ast-1$, it follows that $|F|\leq e^\ast$. 

We now construct vertex set $S\subseteq V(H)$.  Put $j=d$ and initially set
\begin{equation}
S = L\cup\{v_j,v_{j+1},\ldots,v_d\}.
 \label{eq:mgd-unified-initial-set}
\end{equation}
Clearly, the induced subgraph $H[S]$ is connected. 
Starting from \eqref{eq:mgd-unified-initial-set}, repeatedly perform
the following operations:

(i). if a vertex $v\in F\setminus S$ has a neighbor in $S$, add $v$ to $S$;

(ii). if a vertex $v\in S\setminus V(P)$ is adjacent to $v_k$
for some $k<j$, replace $j$ by the least such $k$ and add
all the vertices $v_k,v_{k+1},\ldots,v_d$ to $S$.

Note that vertices in operation~(i) may be added one at a time.
Since at most $|F|\leq e^\ast$ vertices can be added by
operation~(i), and $j$ can only decrease. Hence the
procedure terminates. We next bound how far $j$ can move toward $v_0$.

\begin{claim}\label{claim:d-j}
If $r$ vertices of $F$ have been
added, then $d-j\leq 2r + 2$, and every added vertex of $F$ can be joined to $w$, by a path
contained in the current induced subgraph $H[S]$, of length at most
$2r + 1$.
\end{claim}

\begin{proof}[Proof of Claim \ref{claim:d-j}]
Before any vertex of $F$ has been added, every off-path vertex in
$S$ belongs to $L$ and is adjacent to $w$. If $v\in L$ is
also adjacent to $v_k$, then $v_kvw$
is an $v_k$--$w$ path of length two.  
Since $P$ is a shortest $u$--$w$ path, $d-k=\dist_H(v_k,w)\leq 2$.
Therefore, before any vertex of $F$ is added, $d-j\leq2$,
which proves the assertion for $r=0$.

Suppose now that $r-1$ vertices of $F$ have been added and that
the assertions hold. Let $v\in F$ be the next vertex added by
operation~(i), and let $y\in S$ be a neighbor of $v$.

If $y\in L$, then $y\sim w$, so $v$ has a path to $w$ of
length two. If $y$ is one of the previously added vertices of
$F$, then the induction hypothesis gives a $y$--$w$ path of
length at most $2(r-1)+1=2r-1$.
After adding the edge $vy$, we obtain a $v$--$w$ path of length
at most $2r$. If $y\in V(P)$, before $v$ is added, the induction hypothesis gives
$d-j\leq2(r-1)+2=2r$.
Thus the subpath from $y$ to $w$ has length at most $2r$, and
the edge $vy$, followed by this subpath, gives a $v$--$w$ path
of length at most $2r+1$.  

Suppose operation~(ii) is now triggered by an edge
$v_kz$, where $z\in S\setminus V(P)$. If $z\in L$, then
$z\sim w$. If $z\in F$, then there is a $z$--$w$ path of
length at most $2r+1$. In either case, the edge $v_kz$, followed
by a path from $z$ to $w$, gives a $v_k$--$w$ walk of length
at most $2r+2$. Therefore $d-k=\dist_H(v_k,w)\leq 2r + 2$.
After replacing $j$ by $k$, we retain $d-j\leq 2r+2$.
This completes the induction.
\end{proof}

Let $S$ and $j$ denote the set and the index when the procedure
terminates. Hence, $d-j\leq2e^\ast+2\leq 6$. In the following we will prove that $S$ is the desired set $V_i$.
Define $B_i=H_i[S]$, $\widehat{u}_i=v_j$.

(1) We first determine the edges between $S$ and $V(H_i)\setminus S$.
Suppose $j>0$. There is no edge from $S$ to a vertex of
$F\setminus S$, since otherwise operation~(i) would
still apply. There is also no vertex of $L$ outside $S$, since all
vertices of $L$ were included initially. Thus no edge from $S$
to $V(H_i)\setminus S$ can have its endpoint outside $S$ in
$R=L\cup F$. There is also no edge from a vertex of $S\setminus V(P)$ to a path
vertex $v_k$ with $k<j$, because otherwise operation
(ii) would still apply.
It remains to consider an edge $v_av_b\in E(H_i)$, $b<j\leq a$.
If this edge is not $v_{j-1}v_j$, then $a-b\geq 2$. The edge
$v_bv_a$ would then replace the subpath
$v_bv_{b+1}\cdots v_a$ of length $a-b\geq 2$ by a path of length one. This contradicts the
fact that $P$ is a shortest path. Therefore
$E_{H_i} (S, V(H_i)\setminus S) = \{v_{j-1}v_j\}$.

Suppose instead that $j=0$. Then $V(P)\subseteq S$. If
$S\neq V(H_i)$, the connectedness of $H_i$ gives an edge between $S$
and $V(H_i)\setminus S$. Every vertex outside $S$ belongs to
$F$, because $V(P)\cup L\subseteq S$. Such an edge would permit
another application of operation~(i), contrary to termination. Hence $S=V(H_i)$.
In this case, $B_i=H_i$, $\widehat{u}_i = u_i$,
and the edge by which $H_i$ is attached to the remainder of $G$
is the unique edge joining $B_i$ to $G-V(B_i)$.
Thus $B_i$ is a connected induced subgraph containing $w_i$, and exactly one edge joins $B_i$ to
the remainder of $G$.

(2) We next estimate $|V(B_i)|$. By $|L|\geq q-1$, $B_i$ contains $w_i$ and at least
$q-1$ vertices of $L$. Therefore $|V(B_i)|\geq 1 + |L|\geq q$.
The subgraph $B_i$ contains at most all $|R| = q + e^\ast - 1$
off-path vertices and exactly $d-j+1$ vertices of $P$. It follows from $d-j\leq 6$ that
\[
|V(B_i)| \leq (q + e^\ast - 1) + (d - j + 1)\leq q + 8.
\]

We now prove the distance estimates. Since the $\widehat{u}_i$--$w_i$ subpath of $P$ belongs to $B_i$,
$\dist_{B_i}(\widehat{u}_i, w_i)=d-j$. Equation $d-j\leq 6$ therefore gives
$\dist_{B_i} (\widehat{u}_i, w_i)\leq 6$.
Every vertex of $L$ is adjacent to $w_i$, and every included vertex of $F$ has a path to $w_i$ of length at most
$2e^\ast + 1\leq 5$. A path vertex in $B_i$ is at distance at most $d-j\leq 6$ from
$\widehat{u}_i$. A vertex of $L$ is at distance at most
$(d-j)+1\leq 7$ from $\widehat{u}_i$. Finally, a vertex of $F\cap V(B_i)$ is at
distance at most $(d-j)+(2e^\ast+1)\leq 6+5=11$ from $\widehat{u}_i$. Hence
$\max_{v\in V(B_i)}\dist_{B_i}(\widehat u_i,v)\leq 11$.

(3) It remains to study the induced subgraph
$C = G[V(G)\setminus(V(B_1)\cup V(B_2))]$.
Every vertex of $C$ outside the remaining portions of $P_1$ and
$P_2$ belongs to one of the sets $F_1,F_2$. Their total number is
at most $|F_1|+|F_2|\leq e_1+e_2\leq 2$.
A vertex on the remaining central path has at most two neighbors on
that path and at most $2$ neighbors outside it. Thus its degree in
$C$ is at most $4$.

Let $v$ be a vertex of $C$ outside the selected path.
The vertex $v$ can be adjacent to at most three consecutive vertices
of the selected path in its subgraph $H_i$. Indeed, if
$v\sim v_a$, $v\sim v_b$, $b-a\geq 3$,
then $v_avv_b$ is an $v_a$--$v_b$ path of length two, whereas
the corresponding subpath of $P_i$ has length $b-a\geq 3$. This
contradicts the fact that $P_i$ is a shortest path. The vertex $v$
also has at most one neighbor outside the selected paths, because
there are at most two such vertices in total. Therefore
$d_C(v)\leq 3+1=4$.

Finally, the bounds proved above give $|V(B_i)|=q+O(1)$.
Since $B_1,B_2,C$ have pairwise disjoint vertex sets whose union is
$V(G)$, we have $|V(C)| = n-|V(B_1)|-|V(B_2)| = n-2q+O(1)$.
Using the previously established estimate
$n-2q=\Theta(q\log q)$, we conclude that $|V(C)|=\Theta(q\log q)$.
\end{proof}

In the following we always assume $\bm{z}_i$ is the unit Perron vector of $B_i$ for $i\in\{1,2\}$, and
$\widehat{u}_i\in B_i$, $\widehat{v}_i\in C$ and $\widehat{u}_i\widehat{v}_i$
is the unique edge joining $B_i$ to $C$.

\begin{lemma}\label{lem:terminal-parameters}
For $i=1,2$, we have $\lambda_1(B_i) = q + O(1)$, $\lambda_1(B_i) - \lambda_2(B_i) = \Theta(q)$, and 
$z_i(\widehat{u}_i)\geq q^{-8}$.
Furthermore, every positive eigenvalue of $G$ other than 
$\lambda_1$ and $\lambda_2$ is $O(\sqrt{q})$.
\end{lemma}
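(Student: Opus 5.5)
The plan is to read off all four assertions from the structure given by Theorem~\ref{thm:terminal-extraction}, namely $|V(B_i)|\le q+8$, $\operatorname{dist}_{B_i}(\widehat u_i,w_i)\le 6$ and $\Delta(C)\le 4$, combined with $\lambda_2\le q<\lambda_2+1$ and $\lambda_1=\lambda_2+o(q^{-K})$ from \eqref{eq:terminal-standing-scale}.

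\textbf{Step 1: $\lambda_1(B_i)=q+O(1)$.} The upper bound is immediate: $B_i$ is a subgraph of $G$, so $\lambda_1(B_i)\le\lambda_1\le q+o(1)$. For the lower bound, note that $B_i$ contains $w_i$ together with the set $L=N_{H_i}(w_i)\cap R$, and $|L|\ge q-1$. So $B_i$ contains a star $K_{1,q-1}$, which gives only $\lambda_1(B_i)\ge\sqrt{q-1}$ and is too weak. Instead I would compare with $\alpha_i=\lambda_1(H_i)>q-1$ (Lemma~\ref{lem:petal-compression}), using the Perron vector $\bm z$ of $H_i$ restricted to $V(B_i)$ as a Rayleigh quotient test vector.
\begin{itemize}
\item The vertices of $H_i$ outside $B_i$ lie in $C$ and hang off the single edge $\widehat u_i\widehat v_i$. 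They induce a graph of maximum degree at most $4$ (Theorem~\ref{thm:terminal-extraction}(3)).
\item Along such a region the eigen-equation $\alpha_i z_v=\sum_{u\sim v}z_u$ forces geometric decay by a factor $O(1/q)$ per step away from $\widehat u_i$. A maximum-principle argument on the branch gives $z_v\le (5/\alpha_i)\max_{u\sim v}z_u$.
\item Hence the $\ell^2$-mass of $\bm z$ outside $B_i$ is $O(q^{-2})\,z(\widehat u_i)^2$. The only edge crossing the cut is $\widehat u_i\widehat v_i$, so the Rayleigh quotient loses only $O(z(\widehat u_i)z(\widehat v_i))=O(q^{-1})$.
\end{itemize}
This yields $\lambda_1(B_i)\ge\alpha_i-O(1)>q-O(1)$.

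\textbf{Step 2: $\lambda_1(B_i)-\lambda_2(B_i)=\Theta(q)$.} I would use a trace bound. We have $\sum_j\lambda_j(B_i)^2=2e(B_i)\le |V(B_i)|(|V(B_i)|-1)\le(q+8)(q+7)$. Subtracting $\lambda_1(B_i)^2\ge(q-O(1))^2$ leaves $\sum_{j\ge2}\lambda_j(B_i)^2=O(q)$. Therefore $|\lambda_j(B_i)|=O(\sqrt q)$ for $j\ge 2$, and the gap is $q-O(\sqrt q)=\Theta(q)$. The matching upper bound is trivial, since the gap is at most $\lambda_1(B_i)+|\lambda_2(B_i)|=O(q)$.

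\textbf{Step 3: $z_i(\widehat u_i)\ge q^{-8}$.} I would repeat the argument of Lemma~\ref{lem:root-attenuation} inside $B_i$. Let $x^\ast$ be the maximal entry of the Perron vector of $B_i$. Then $x^\ast\ge|V(B_i)|^{-1/2}\ge(q+8)^{-1/2}$. Let $w^\ast$ be a vertex attaining it, and take a shortest path in $B_i$ from $\widehat u_i$ to $w^\ast$; its length is at most $11$ by Theorem~\ref{thm:terminal-extraction}(2). The eigen-equation gives $z_{v_{k}}\ge z_{v_{k+1}}/\lambda_1(B_i)$ along the path. Iterating yields
\[
z_i(\widehat u_i)\ge (q+8)^{-1/2}(q+O(1))^{-11}.
\]
This is too weak by this route. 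The better route is to run the path from $\widehat u_i$ to $w_i$, whose length is at most $6$, and to check that $w_i$ still carries entry $\Omega(q^{-1/2})$ in the Perron vector of $B_i$. This holds because $w_i$ is adjacent to the $q-1$ vertices of $L$: every $v\in L$ has $z_v\ge z_{w_i}/\lambda_1(B_i)$, and conversely $\lambda_1 z_{w_i}\ge\sum_{v\in L}z_v$, where the entries on $L$ carry almost all of the mass. Then $z_i(\widehat u_i)\ge c\,q^{-1/2}(q+O(1))^{-6}\ge q^{-8}$ for large $q$.

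\textbf{Step 4: every other positive eigenvalue of $G$ is $O(\sqrt q)$.} Write $A(G)=A_0+E$, where $A_0=A(B_1)\oplus A(B_2)\oplus A(C)$ and $E$ is the adjacency matrix of the two edges $\widehat u_1\widehat v_1,\widehat u_2\widehat v_2$, so $\|E\|=1$. By Weyl's inequality, $\lambda_3(G)\le\lambda_3(A_0)+1$. The eigenvalues of $A_0$ above $O(\sqrt q)$ are exactly $\lambda_1(B_1)$ and $\lambda_1(B_2)$. Indeed, $\lambda_2(B_i)=O(\sqrt q)$ by Step 2, and $\lambda_1(C)\le\Delta(C)\le4$. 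Hence $\lambda_3(A_0)=O(\sqrt q)$, and so $\lambda_3(G)=O(\sqrt q)$.

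\textbf{Main obstacle.} I expect the main difficulty to be the lower bound on $\lambda_1(B_i)$ in Step 1, and with it the lower bound on $z_{w_i}$ needed in Step 3. Both require controlling how much Perron mass of $H_i$ leaks into the bounded-degree part of $C$. I would handle this with the decay estimate above, or alternatively by interlacing after deleting the single vertex $\widehat v_i$.
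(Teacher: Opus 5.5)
Your proposal is correct. Steps 1, 2 and 4 follow the paper's proof.
\begin{itemize}
\item In Step 1 the paper also restricts the Perron vector of $H_i$ to $B_i$ and uses the Rayleigh quotient. It controls the normalization through $\|\bm z_i|_{B_i}\|^2=\Omega(q\,z_i(w_i)^2)$ rather than through decay outside $B_i$; either works.
\item Since only an $O(1)$ error is needed there, the layered decay estimate is unnecessary. Note also that your one-step inequality $z_v\le(5/\alpha_i)\max_{u\sim v}z_u$ does not by itself give decay in the distance. Either the crude bound $z(\widehat v_i)\le z(\widehat u_i)/(\alpha_i-4)$ or Weyl's inequality $\lambda_1(H_i)\le\max\{\lambda_1(B_i),4\}+1$ already suffices.
\item Step 2 is the same trace bound as in the paper.
\item Step 4 is the same Weyl perturbation by the two connecting edges. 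These edges are vertex-disjoint here, so $\|E\|=1$ is fine.
\end{itemize}

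The genuine difference is Step 3, and here your route is the better one. The paper walks from $\widehat u_i$ to the Perron maximum of $B_i$ and writes the factor $\lambda_1(B_i)^{-6}$. That vertex, however, is only known to lie within distance $11$ of $\widehat u_i$. As written, the paper's argument therefore gives only about $q^{-23/2}$. This weaker bound is harmless later, since $\gamma=o(q^{-K})$ for every $K$, but it does not give $q^{-8}$. Your detour through $w_i$, which is at distance at most $6$ from $\widehat u_i$, is what actually delivers the stated exponent.

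One ingredient is missing. ``The entries on $L$ carry almost all of the mass'' is not justified. Even granted, it does not by itself give $\sum_{v\in L}x_v=\Omega(\sqrt q)$, because Cauchy--Schwarz runs the wrong way. The fix is the uniform bound
\[
x_v\le \frac{\sqrt{|V(B_i)|}}{\lambda_1(B_i)}=O(q^{-1/2})
\]
for the unit Perron vector $\bm x$ of $B_i$, which follows from $\lambda_1(B_i)x_v\le\sum_u x_u\le\sqrt{|V(B_i)|}$. This bound does three things:
\begin{itemize}
\item It shows that the at most $9$ vertices outside $L$ carry mass $O(q^{-1})$.
\item It converts $\sum_{v\in L}x_v^2\ge1-O(q^{-1})$ into $\sum_{v\in L}x_v\ge\bigl(1-O(q^{-1})\bigr)/\max_v x_v=\Omega(\sqrt q)$.
\item Combined with $\lambda_1(B_i)x_{w_i}\ge\sum_{v\in L}x_v$, it gives $x_{w_i}=\Omega(q^{-1/2})$.
\end{itemize}
The rest of your Step 3 then goes through. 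Your remark that $x_v\ge x_{w_i}/\lambda_1(B_i)$ on $L$ plays no role.
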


\begin{proof}
If $Z = \emptyset$, then $\lambda_2<\lambda_1(H_i)\leq\lambda_1$ by Lemma \ref{lem:petal-compression}, 
while if $Z \neq \emptyset$, then $\lambda_1(H_i) = \lambda_2$. Since $q-1<\lambda_2\leq q$ and
$\lambda_1-\lambda_2=o(1)$, in both cases $\lambda_1(H_i) = q + O(1)$.

If $B_i = H_i$, we clearly have $\lambda_1(B_i) = q + O(1)$. If $B_i\subseteq H_i$, Theorem \ref{thm:terminal-extraction} implies that
$\widehat{v}_i$ is the unique neighbor of $\widehat{u}_i$ outside $B_i$. Restrict $\bm{z}_i$ to $B_i$. The only lost edge is $uu'$, so
\begin{equation}\label{eq:terminal-restriction-rayleigh}
(\bm{z}_i|_{B_i})^{\top} A(B_i) \bm{z}_i|_{B_i}
= \lambda_1(H_i)\|\bm{z}_i|_{B_i}\|^2 - z_i(\widehat{u}_i) z_i(\widehat{v}_i).
\end{equation}
Indeed, for every $v\in V(B_i)$, the eigenvalue\,--\,eigenvector equation at $v$ is
\[
\lambda_1(H_i) z_i(v) = \sum_{w\in V(B_i),\,w\sim v} z_i(w) + \bm{1}_{\{v=u\}} z_i(\widehat{v}_i).
\]
Multiplying by $z_i(v)$ and summing over $v\in V(B)$ gives
\begin{align*}
\lambda_1(H_i)\sum_{v\in V(B_i)} z_i(v)^2
& = \sum_{v\in V(B_i)} \sum_{w\in V(B_i),\,w\sim v} z_i(v)z_i(w) + z_i(\widehat{u}_i) z_i(\widehat{v}_i) \\
& = (\bm{z}_i|_{B_i})^{\top} A(B_i) \bm{z}_i|_{B_i} + z_i(\widehat{u}_i) z_i(\widehat{v}_i).
\end{align*}
so \eqref{eq:terminal-restriction-rayleigh} follows. By the Rayleigh principle,
\[
\lambda_1(B_i)\geq
\frac{(\bm{z}_i|_{B_i})^{\top} A(B_i)(\bm{z}_i|_{B_i})}{\|\bm{z}_i|_{B_i}\|^2}
= \lambda_1(H_i) - \frac{z_i(\widehat{u}_i)z_i(\widehat{v}_i)}{\|\bm{z}_i|_{B_i}\|^2}.
\]
Since $z_i(\widehat{u}_i),z_i(\widehat{v}_i)\leq z_i(w_i)$ and $\|\bm{z}_i|_{B_i}\|^2 = \Omega(q z_i(w_i)^2)$,
we get $\lambda_1(H_i)\geq\lambda_1(B_i)\geq\lambda_1(H_i) - O(q^{-1})$. Hence, $\lambda_1(B_i) = q+O(1)$.

Next, since $|V(B_i)| = q + O(1)$, and $\mathrm{tr}\, A(B_i)^2 = 2|E(B_i)|\leq |V(B_i)|(|V(B_i)| - 1)$, we have
$\lambda_2(B_i)^2 \leq \mathrm{tr}\, A(B_i)^2 - \lambda_1(B_i)^2 = O(q)$.
Thus $\lambda_1(B_i) - \lambda_2(B_i)=\Theta(q)$.

Theorem \ref{thm:terminal-extraction} gives every vertex of $B_i$ is at distance at most $11$ from $\widehat u_i$.
If $v$ is a vertex in $B_i$ at which $\bm{z}_i$ is maximum, repeated use of
the eigenvalue\,--\,eigenvector equations along a shortest $\widehat{u}_i$\,--\,$v$ path gives
$z_i(\widehat{u}_i)\geq \lambda_1(B_i)^{-6}\widehat{z}_i(v) \geq \lambda_1(B_i)^{-6} |V(B_i)|^{-1/2}\geq q^{-8}$
for all sufficiently large $q$.

Finally, consider the graph $B_1\sqcup C\sqcup B_2$. Its third-largest
positive eigenvalue is $O(\sqrt{q})$. Adding edges $\widehat{u}_1\widehat{v}_1$ and $\widehat{u}_2\widehat{v}_2$ 
to $B_1\sqcup C\sqcup B_2$ changes the operator norm by at most $2$, and the desired assertion then 
follows from Weyl's inequality.
\end{proof}

\subsection{The induced subgraph $G[V(C)]$ is a path}

With the notation in Theorem \ref{thm:terminal-extraction},
by Theorem \ref{thm:terminal-extraction}, $G$ is obtained from the disjoint union
$B_1\sqcup C\sqcup B_2$
by adding precisely the two edges $\widehat{u}_1\widehat{v}_1$ and $\widehat{u}_2\widehat{v}_2$,
where $\widehat{v}_1$, $\widehat{v}_2\in V(C)$ and $\widehat{u}_i\in V(B_i)$, $i=1,2$.
Let $s:= |V(C)|$. In this subsection we will see that $V(C)$ induces a path. The proof is technical so we give a high-level outline now. We will give a formula for $\gap(G)$ in terms of the spectral radii of the graphs induced by $B_1$ and $B_2$ and the resolvent of $A(C)$ (see \eqref{eq:loaded-two-pole-formula}). We will compare the spectral gap of $G$ with the spectral gap of the graph given by two copies of either $G_1$ or $G_2$ joined by a path. We will show that the error terms can be controlled enough that the term in \eqref{eq:loaded-two-pole-formula} coming from the resolvent of $A(C)$ will be the determining factor in which of these graphs has the smallest spectral gap. Finally, we will estimate this term and see that it is smallest if $C$ is a path. This final estimate uses Lemma \ref{lem:corridor-transfer} which gives a lower bound on the term when $C$ is not a path, and which we prove next. 

\begin{lemma}\label{lem:corridor-transfer}
Let $x\in [q-K, q+K]$. If $C$ is not $P_s$, then $R_C(x)_{\widehat{v}_1\widehat{v}_2} \geq x^{-(s-1)}$. 
Moreover, $R_C(x)_{\widehat{v}_1\widehat{v}_1} = x^{-1} + O(x^{-3})$, and
$|R_C'(x)_{\widehat{v}_1\widehat{v}_1}| = \|R_C(x) \bm{e}_{\widehat{v}_1}\|^2 \leq (x-4)^{-2} = O(q^{-2})$,
and the same estimates hold at $\widehat{v}_2$.
\end{lemma}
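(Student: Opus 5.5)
The plan is to use the walk-generating expansion of the resolvent. Since $\Delta(C)\leq 4$ by Theorem \ref{thm:terminal-extraction}, we have $\lambda_1(C)\leq 4$, and for $x\in[q-K,q+K]$ (so $x>4$) the Neumann series
\[
R_C(x)=\sum_{k\geq 0}\frac{A(C)^k}{x^{k+1}}
\]
converges absolutely. All its terms are entrywise nonnegative, so $R_C(x)_{ab}=\sum_{k} W_k(a,b)\,x^{-(k+1)}$, where $W_k(a,b)$ counts walks of length $k$ from $a$ to $b$ in $C$. Each claimed estimate is then a walk-counting statement.

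For the off-diagonal lower bound, $C$ is connected by Theorem \ref{thm:terminal-extraction}. Let $\ell=\dist_C(\widehat v_1,\widehat v_2)$. Then $W_\ell(\widehat v_1,\widehat v_2)\geq 1$, so by positivity $R_C(x)_{\widehat v_1\widehat v_2}\geq x^{-(\ell+1)}$. It therefore suffices to show $\ell\leq s-2$ when $C\neq P_s$. Trivially $\ell\leq s-1$, and $\ell=s-1$ forces a Hamiltonian shortest path from $\widehat v_1$ to $\widehat v_2$. This path must be induced (it is geodesic, so it has no chords), and since it covers every vertex, $C$ would equal that path, i.e.\ $C\cong P_s$. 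Hence $\ell\leq s-2$ and $R_C(x)_{\widehat v_1\widehat v_2}\geq x^{-(s-1)}$. The degenerate case $\widehat v_1=\widehat v_2$ is covered as well, since then $\ell=0$.

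For the diagonal entry, $W_0=1$ and $W_1(v,v)=0$, so $R_C(x)_{vv}=x^{-1}+\sum_{k\geq 2}W_k(v,v)x^{-(k+1)}$. Using $W_k(v,v)\leq \Delta^k\leq 4^k$, the tail is at most $\sum_{k\geq2}4^kx^{-(k+1)}=16x^{-3}/(1-4/x)=O(x^{-3})$. For the derivative, differentiating $R_C(x)=(xI-A(C))^{-1}$ gives $R_C'=-R_C^2$. Since $R_C$ is symmetric, $R_C'(x)_{vv}=-\|R_C(x)\bm e_v\|^2$, and $\|R_C(x)\bm e_v\|\leq\|R_C(x)\|=(x-\lambda_1(C))^{-1}\leq (x-4)^{-1}$. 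This yields $(x-4)^{-2}=O(q^{-2})$ because $x=q+O(1)$. The same argument applies verbatim at $\widehat v_2$.

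The only nonroutine point is the combinatorial step that a geodesic of length $s-1$ forces $C$ to be a path. This needs the observation that a shortest path is induced, and I expect no further obstacle there. One must also check that $\lambda_1(C)\leq\Delta(C)\leq4<x$ holds, which follows from $q\to\infty$.
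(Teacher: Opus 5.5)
Your proposal is correct and follows the paper's proof essentially step for step. Both use the Neumann walk expansion for $x>4$, the single shortest-walk term for the off-diagonal bound (you also spell out why a geodesic of length $s-1$ forces $C$ to be that path, which the paper only asserts), the $4^k$ closed-walk count for the diagonal entry, and $R_C'=-R_C^2$ with $\|R_C(x)\|\le (x-4)^{-1}$ for the derivative.
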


\begin{proof}
By Theorem \ref{thm:terminal-extraction}, $\Delta(C) \leq 4$. Since $\lambda_1(C)\leq\Delta(C)\leq 4$, for $x>4$, we have
\begin{equation}\label{eq:corridor-walk-expansion}
R_C(x) = x^{-1}\sum_{j\geq 0} \frac{A(C)^j}{x^j}.
\end{equation}
If $\operatorname{dist}_C(\widehat{v}_1, \widehat{v}_2)=s-1$, then $C = P_s$ with
ends $\widehat{v}_1, \widehat{v}_2$. Hence, if $C$ is not this path, then
$\operatorname{dist}_C(\widehat{v}_1, \widehat{v}_2)\leq s-2$. The term in
\eqref{eq:corridor-walk-expansion} corresponding to one shortest walk gives
$R_C(x)_{\widehat{v}_1\widehat{v}_2} \geq x^{-(s-1)}$.

Since there is one closed walk of length zero, no closed walk of length one, and at most $4^j$
walks of length $j$ starting at $\widehat{v}_1$, we have 
\begin{align*}
R_C(x)_{\widehat{v}_1\widehat{v}_1} = \bm{e}_{\widehat{v}_1}^{\top} R_C(x) \bm{e}_{\widehat{v}_1} 
& = \frac{1}{x} \bigg (1 + \sum_{j\geq 2} \frac{\bm{e}_{\widehat{v}_1}^{\top} A(C)^j \bm{e}_{\widehat{v}_1}}{x^j}\bigg) \\
& \leq \frac{1}{x} \bigg(1 + \sum_{j\geq 2} \Big(\frac{4}{x}\Big)^j\bigg)
= \frac{1}{x} + O\Big(\frac{1}{x^3}\Big).
\end{align*}

Finally, since $R_C'(x) = -R_C(x)^2$, we obtain
$|R_C'(x)_{\widehat{v}_1\widehat{v}_1}| = \|R_C(x) \bm{e}_{\widehat{v}_1}\|^2
\leq \|R_C(x)\|^2 \leq (x-4)^{-2}$.
\end{proof}

In the following, we write
\[
r_{1,C}(x) := R_C(x)_{\widehat{v}_1\widehat{v}_1}, \qquad r_{2,C}(x) := R_C(x)_{\widehat{v}_2\widehat{v}_2}, \qquad 
\tau_C(x) := R_C(x)_{\widehat{v}_1\widehat{v}_2},
\]
and $Q_{B_i, \widehat{u}_i}(x) := \phi(B_i, x)/\phi(B_i - \widehat{u}_i,x)$, 
$F_{i,C}(x) := Q_{B_i,\widehat{u}_i}(x) - r_{i,C}(x)$.

With the vertices ordered as $V(B_1)$, $V(B_2)$, $V(C)$,
the matrix $xI - A(G)$ has block form
\[
xI - A(G) =
\begin{bmatrix}
xI - A(B_1) & 0 & -\bm{e}_{\widehat{u}_1} \bm{e}_{\widehat{v}_1}^{\top} \\[1mm]
0 & xI - A(B_2) & -\bm{e}_{\widehat{u}_2} \bm{e}_{\widehat{v}_2}^{\top} \\[1mm]
-\bm{e}_{\widehat{v}_1} \bm{e}_{\widehat{u}_1}^{\top} & -\bm{e}_{\widehat{v}_2} \bm{e}_{\widehat{u}_2}^{\top} & xI - A(C)
\end{bmatrix}.
\]
Suppose that $x\notin\operatorname{spec}(B_1)\cup\operatorname{spec}(B_2)\cup
\operatorname{spec}(C)$. For $i=1,2$, define
$g_i(x) := \bm{e}_{\widehat{u}_i}^{\top} R_{B_i}(x) \bm{e}_{\widehat{u}_i}$.
Taking the Schur complement of
$\operatorname{diag}(xI - A(B_1), xI - A(B_2))$ gives
\begin{equation}\label{eq:characteristic-polynomial}
\det (xI - A(G)) = \det (xI - A(B_1)) \det (xI - A(B_2)) \det S_C(x),
\end{equation}
where $S_C(x) = xI - A(C) - g_1(x) \bm{e}_{\widehat{v}_1} \bm{e}_{\widehat{v}_1}^{\top} 
- g_2(x) \bm{e}_{\widehat{v}_2} \bm{e}_{\widehat{v}_2}^{\top}$. Set
$U = [\bm{e}_{\widehat{v}_1}, \bm{e}_{\widehat{v}_2}]$. Then
\[
S_C(x) = xI - A(C) - U\, \mathrm{diag} (g_1(x), g_2(x))\, U^{\top}.
\]
It follows that
\[
\det S_C(x) = \det (xI - A(C))
\det\big(I_2 - \mathrm{diag} (g_1(x), g_2(x))\, U^{\top} R_C(x) U\big).
\]
Since $U^{\top} R_C(x) U = \left[
\begin{smallmatrix} r_{1,C}(x) & \tau_C(x) \\ \tau_C(x) & r_{2,C}(x) \end{smallmatrix}\right]$,
the last $2\times 2$ determinant above is
\begin{align*}
& ~\det \begin{bmatrix}
1 - g_1(x) r_{1,C}(x) & - g_1(x) \tau_C(x) \\
-g_2(x) \tau_C(x) & 1 - g_2(x)r_{2,C}(x)
\end{bmatrix} \\
= & ~ (1 - g_1(x)r_{1,C}(x)) (1 - g_2(x)r_{2,C}(x)) - g_1(x)g_2(x)\tau_C(x)^2.
\end{align*}
Cramer's rule gives $g_i(x) = \phi(B_i - \widehat{u}_i, x)/\phi(B_i, x)$,
and therefore, wherever $g_i(x)\neq 0$,
$Q_{B_i, \widehat{u}_i}(x) = 1/g_i(x)$.
It follows from $F_{i,C}(x) = Q_{B_i, \widehat{u}_i} (x) - r_{i,C}(x)$ that
\begin{align*}
& ~ \big(1 - g_1(x)r_{1,C}(x)\big) \big(1 - g_2(x)r_{2,C}(x)\big) - g_1(x)g_2(x)\tau_C(x)^2 \\
= & ~ g_1(x)g_2(x)\big(F_{1,C}(x) F_{2,C}(x) - \tau_C(x)^2\big).
\end{align*}
Combining with \eqref{eq:characteristic-polynomial}, we have 
\[
\phi(G, x) = \phi(B_1-\widehat{u}_1, x) \phi(B_2-\widehat{u}_2, x) \phi(C, x)
\big(F_{1,C}(x) F_{2,C}(x) - \tau_C(x)^2\big).
\]
Thus, if $x\notin\operatorname{spec}(B_1)\cup\operatorname{spec}(B_2)\cup
\operatorname{spec}(C)$ is an eigenvalue of $G$, then
\[
F_{1,C}(x) F_{2,C}(x) - \tau_C(x)^2 = 0.
\]

Put $G_0 = G-\{\widehat{u}_1, \widehat{u}_2\}$, we have
$\lambda_1(G_0) = \max\{\lambda_1(B_1 - \widehat{u}_1), \lambda_1(C), \lambda_1(B_2 - \widehat{u}_2)\}
< \lambda_2$, where the last inequality follows the argument similar to the proof of Lemma \ref{lem:petal-compression}.
On the other hand, Cauchy interlacing gives
$\lambda_3(G)\leq\lambda_1(G_0) < \lambda_2$.
Thus $G$ has exactly two eigenvalues above $\lambda_1(G_0)$.
For $x > \lambda_1(G_0)$, $\phi(G, x) = 0$ is equivalent to
$F_{1,C}(x) F_{2,C}(x) - \tau_C(x)^2 = 0$. Hence, $\lambda_1(G)$ and $\lambda_2(G)$
are the two roots of $F_{1,C}(x) F_{2,C}(x) - \tau_C(x)^2 = 0$.

Put $Q_i(t):=\frac{\phi(B_i,t)}{\phi(B_i-\widehat u_i,t)}$.
We use $I:=[q-K,q+K]$, where $K$ is a sufficiently
large fixed constant. The estimates below show that all relevant
spectral parameters belong to $I$.

\begin{lemma}\label{lem:loaded-corridor-poles}
Under the standing assumptions, the following assertions hold for all
sufficiently large $q$.
\begin{enumerate}
\item[$(1)$] For $i=1,2$, the function $F_{i,C}$ has a unique finite zero
$\vartheta_i$ in $I$, which is also its unique zero in
$(\lambda_1(B_i), \infty)$. Moreover,
\begin{equation}\label{eq:lcp-spectral-interpretation}
\vartheta_i
=\lambda_1\bigl(G[V(B_i)\cup V(C)]\bigr)
=\lambda_1(B_i) + \Theta (z_i(\widehat{u}_i)^2/q).
\end{equation}
\item[$(2)$] If $D_{i,C}:=F_{i,C}'(\vartheta_i)$, then
\begin{equation}\label{eq:loaded-pole-and-slope}
\vartheta_i
=\lambda_1(B_i) + z_i(\widehat{u}_i)^2 r_{i,C}(\vartheta_i)\bigl(1+O(q^{-2})\bigr),
\qquad
D_{i,C} = z_i(\widehat{u}_i)^{-2}\big(1 + O(q^{-2})\big).
\end{equation}
In fact, if $\widehat{\bm z}_i$ is the positive unit Perron vector
of $G[V(B_i)\cup V(C)]$, then
$D_{i,C}=\widehat z_i(\widehat u_i)^{-2}$.

\item[$(3)$] For $i=1,2$, we have $\lambda_1(B_i) < \lambda_2 < \vartheta_i < \lambda_1$.
For every $x_*\in [\lambda_2, \lambda_1]$, we have
\begin{equation}\label{eq:loaded-two-pole-formula}
\lambda_1 - \lambda_2 = \bigl(1+O(q^{-2})\bigr)
\sqrt{(\vartheta_1-\vartheta_2)^2+
\frac{4\tau_C(x_*)^2}{D_{1,C}D_{2,C}}}.
\end{equation}
\end{enumerate}
\end{lemma}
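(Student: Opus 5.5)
The plan is to treat each of the two ``half graphs'' $G_i^\ast:=G[V(B_i)\cup V(C)]$ separately via the same Schur complement that produced the factorization of $\phi(G,x)$, and then combine them through the $2\times 2$ secular equation $F_{1,C}F_{2,C}=\tau_C^2$.

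\emph{Part (1).} Eliminating $C$ from $G_i^\ast$ gives
\[
\phi(G_i^\ast,x)=\phi(B_i-\widehat u_i,x)\,\phi(C,x)\,F_{i,C}(x).
\]
Hence for $x>\max\{\lambda_1(B_i-\widehat u_i),\lambda_1(C)\}$ the zeros of $F_{i,C}$ are exactly the eigenvalues of $G_i^\ast$ in that range.

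On $(\lambda_1(B_i),\infty)$ we have $Q_i=1/g_i$. By the spectral decomposition of $R_{B_i}$, $g_i(x)=z_i(\widehat u_i)^2/(x-\lambda_1(B_i))+h_i(x)$, where $0\le h_i\le 1/(x-\lambda_2(B_i))=O(q^{-1})$ by Lemma~\ref{lem:terminal-parameters}. Also $r_{i,C}=x^{-1}+O(x^{-3})$ by Lemma~\ref{lem:corridor-transfer}. Since $g_i$ and $r_{i,C}$ are decreasing, $Q_i$ is increasing, so $F_{i,C}$ is strictly increasing on $(\lambda_1(B_i),\infty)$. It tends to $-r_{i,C}<0$ at $\lambda_1(B_i)^+$ and to $+\infty$ at infinity, so it has a unique zero $\vartheta_i$ there. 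By interlacing (deleting $\widehat u_i$), $\vartheta_i$ is $\lambda_1(G_i^\ast)$.

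Solving $1/g_i(\vartheta_i)=r_{i,C}(\vartheta_i)$ gives
\[
\vartheta_i-\lambda_1(B_i)=z_i(\widehat u_i)^2\bigl/\bigl(1/r_{i,C}-h_i\bigr)=z_i(\widehat u_i)^2 r_{i,C}\,(1+O(q^{-2})),
\]
using $r_{i,C}h_i=O(q^{-2})$. This is $\Theta(z_i(\widehat u_i)^2/q)$. To see that $\vartheta_i$ is the only zero in $I$, note that below $\lambda_1(B_i)$ the other poles of $Q_i$ and eigenvalues of $B_i$ are $O(\sqrt q)$, far outside $I$; I would only check that no zero of $\phi(B_i-\widehat u_i)$ lies in $I$ except possibly one near $\lambda_1(B_i)$, and handle that one by interlacing.

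\emph{Part (2).} For the slope I will use the standard identity: if $\widehat{\bm z}_i$ is the unit Perron vector of $G_i^\ast$, then
\[
F_{i,C}'(\vartheta_i)=\widehat z_i(\widehat u_i)^{-2}.
\]
This follows from $e_{\widehat u}^\top R_{G_i^\ast}(x)e_{\widehat u}=1/F_{i,C}(x)$, since the Schur complement at $\widehat u_i$ over $(B_i-\widehat u_i)\sqcup C$ is exactly $x-\sum(\dots)=F_{i,C}(x)$, and the residue at $\vartheta_i$ is $\widehat z_i(\widehat u_i)^2$. Then $\widehat z_i(\widehat u_i)^2=z_i(\widehat u_i)^2(1+O(q^{-2}))$. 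The reason is that $\widehat{\bm z}_i$ restricted to $B_i$ is proportional to $R_{B_i}(\vartheta_i)e_{\widehat u}$, whose component along $\bm z_i$ dominates by a factor $(\vartheta_i-\lambda_1(B_i))^{-1}$ against $O(q^{-1})$. Its mass on $C$ is $\widehat z(\widehat u)^2\|R_Ce_{\widehat v}\|^2=O(q^{-2})\widehat z(\widehat u)^2$. Alternatively, and more directly, differentiate: $F'=Q_i'-r_{i,C}'=g_i'/g_i^{\,2}\cdot(-1)+O(q^{-2})$, and plug in the expansion of $g_i$.

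\emph{Part (3).} The ordering $\lambda_1(B_i)<\lambda_2<\vartheta_i<\lambda_1$ follows from interlacing in $G$ and sign considerations. At $\lambda_1$ and $\lambda_2$ we have $F_1F_2=\tau^2>0$, so both $F_i$ have the same sign. Since $\vartheta_i=\lambda_1(G_i^\ast)<\lambda_1(G)$ by strict monotonicity, both are positive at $\lambda_1$. At $\lambda_2$, the relation $\lambda_2<\lambda_1(G_i^\ast)$ comes from Cauchy interlacing, since $G-V(B_{3-i})$ together with the other half supplies two eigenvalues. So both $F_i(\lambda_2)<0$, giving $\lambda_2<\vartheta_i<\lambda_1$.

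Now linearize on $[\lambda_2,\lambda_1]\subset I$, an interval of length $o(q^{-K})$: $F_{i,C}(x)=D_{i,C}(x-\vartheta_i)(1+O(q^{-2}))$. This needs a bound $F''=O(D\cdot q^{-2})$ on the relevant scale. I expect this to be the main obstacle, since $F_i'$ varies on scale $\vartheta_i-\lambda_1(B_i)$, which is tiny, so this needs care. I would first try to prove it using the pole structure of $g_i$: the dominant part $z^2/(x-\lambda_1(B_i))$ has an explicit inverse, which is exactly linear. Likewise $\tau_C(x)=\tau_C(x_*)(1+o(1))$ on this tiny interval, since $\tau_C'/\tau_C=O(s/q)$.

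The secular equation then becomes the quadratic
\[
D_1D_2(x-\vartheta_1)(x-\vartheta_2)=\tau^2(1+O(q^{-2})).
\]
Its two roots differ by $\sqrt{(\vartheta_1-\vartheta_2)^2+4\tau^2/(D_1D_2)}$, with a relative error $O(q^{-2})$ that I would control by monotonicity of the root difference in the right-hand side, giving \eqref{eq:loaded-two-pole-formula}.
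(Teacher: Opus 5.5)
Your parts (1) and (2) follow the paper's route: the Schur complement, the splitting $g_i=a_i^2/(t-\beta_i)+h_i$ with $h_i=O(q^{-1})$, and the residue of $1/F_{i,C}=\bm e_{\widehat u_i}^{\top}R_{G_i^\ast}\bm e_{\widehat u_i}$ at $\vartheta_i$. Your uniqueness-in-$I$ step is vague. A cleaner argument: a zero $t<\beta_i$ in $I$ would need $g_i(t)=1/r_{i,C}(t)=\Theta(q)$, whereas $g_i(t)<h_i(t)=O(q^{-1})$ there.

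\textbf{The gap in part (3).} You say $\lambda_1(B_i)<\lambda_2$ follows from interlacing and sign considerations. It cannot. For two cliques of orders $q+5$ and $q$ joined by a long path, $\lambda_2\approx\vartheta_2<\beta_1$. Your interlacing bound from deleting $\widehat u_{3-i}$ also needs this fact. That bound is $\lambda_2\le\max\{\vartheta_i,\lambda_1(B_{3-i}-\widehat u_{3-i})\}$, and concluding $\lambda_2\le\vartheta_i$ requires $\lambda_2>\lambda_1(B_{3-i}-\widehat u_{3-i})$, which is the same missing fact.

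\textbf{The missing input is quantitative.}
\begin{itemize}
\item Lemma~\ref{lem:terminal-parameters} gives $a_i=z_i(\widehat u_i)\ge q^{-8}$. So by part (1), $\vartheta_i-\beta_i=\Theta(a_i^2/q)\ge c\,q^{-17}$, while $\gamma=\lambda_1-\lambda_2=o(q^{-20})$.
\item Hence $\lambda_2>\vartheta_i-\gamma>\beta_i$.
\item More importantly, every $t\in J:=[\lambda_2,\lambda_1]$ satisfies $0<t-\beta_i=O(a_i^2/q)$.
\item On that window the exact identity $Q_i(\beta_i+\delta)=\delta/(a_i^2+\delta h_i(\beta_i+\delta))$ gives $F_{i,C}'(t)=a_i^{-2}(1+O(q^{-2}))$ uniformly. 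The mean value theorem then yields the linearization you left as the main obstacle.
\end{itemize}
Your criterion $F''=O(D q^{-2})$ does not hold as stated. In fact $F_{i,C}''\approx-2h_i a_i^{-4}$, which is of order $D^2/q$. What suffices is $\gamma\,|F_{i,C}''|=o(D_{i,C}q^{-2})$, and this again uses $D_{i,C}\le(1+o(1))q^{16}$, i.e.\ the lower bound on $a_i$.

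\textbf{Smaller points.}
\begin{itemize}
\item The bound $|\tau_C'/\tau_C|=O(s/q)$ is not automatic for an off-diagonal resolvent entry. The paper proves $-\tau_C'\le\tau_C\,\mathrm{tr}\,R_C\le\tau_C\, s/(t-4)$ from $R_{uw}R_{wv}\le R_{uv}R_{ww}$.
\item You need $\tau_C(t)=\tau_C(x_*)(1+o(q^{-2}))$ on $J$, not merely $1+o(1)$.
\item The error factors at $\lambda_1$ and $\lambda_2$ differ, so you cannot treat this as one quadratic. Complete the square separately at each root, using $\lambda_2<(\vartheta_1+\vartheta_2)/2<\lambda_1$.
\end{itemize}
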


\begin{proof}
Fix $i\in\{1,2\}$, let $\widehat B_i:=G[V(B_i)\cup V(C)]$, and
put $\rho_i:=\lambda_1(\widehat B_i)$, $\beta_i:=\lambda_1(B_i)$. Since $\widehat B_i$ is
connected and contains $B_i$, we have $\rho_i>\beta_i$.
Also, $\rho_i\leq\beta_i+1$ by Weyl's inequality, since
$\lambda_1(C)\leq4<\beta_i$ and $\widehat B_i$ is obtained from
$B_i\sqcup C$ by adding one edge. 

Write a positive Perron vector of $\widehat{B}_i$ in blocks
$\bm{x}$, $\bm{y}$ indexed by $V(B_i)$, $V(C)$, respectively. 
Its eigenvalue equation is
\[
(\rho_iI - A(B_i))\bm{x} = y_{\widehat{v}_i}\bm{e}_{\widehat{u}_i}, \qquad
(\rho_iI - A(C))\bm{y} = x_{\widehat{u}_i}\bm{e}_{\widehat v_i}.
\]
Solving the second equation gives
$y_{\widehat v_i}=r_{i,C}(\rho_i)x_{\widehat u_i}$.
Substituting this into the first equation, solving for $\bm x$,
and taking the $\widehat u_i$-coordinate, we obtain
$1=r_{i,C}(\rho_i)g_i(\rho_i)$, where
$g_i(t):=\bm e_{\widehat u_i}^{\top}(tI-A(B_i))^{-1}
\bm e_{\widehat u_i}$. Cramer's rule gives $Q_i(t)=1/g_i(t)$ for
$t>\beta_i$, and hence $F_{i,C}(\rho_i)=0$.

For $t>\beta_i$, we have $g_i(t)>0$,
$g_i'(t)=-\|(tI-A(B_i))^{-1}\bm e_{\widehat u_i}\|^2<0$, and
$r_{i,C}'(t)=-\|R_C(t)\bm e_{\widehat v_i}\|^2<0$.
Consequently,
$F_{i,C}'(t)=-g_i'(t)/g_i(t)^2-r_{i,C}'(t)>0$.
Thus, $\rho_i$ is the unique zero of $F_{i,C}$ above $\beta_i$;
we denote it by $\vartheta_i$.

Choose an orthonormal eigenbasis
$\bm{z}_i^{(1)},\ldots,\bm{z}_i^{(m_i)}$ of $A(B_i)$, with
$\bm{z}_i^{(1)} = \bm{z}_i$. The spectral theorem gives
\begin{equation}\label{eq:green-expansion-detailed}
g_i(t)=\frac{a_i^2}{t-\beta_i}+h_i(t), \qquad
h_i(t):=\sum_{k=2}^{m_i}
\frac{z_i^{(k)}(\widehat u_i)^2}{t-\lambda_k(B_i)}, \qquad 
a_i = z_i(\widehat{u}_i).
\end{equation}
Each denominator in $h_i(t)$ is bounded below by a positive
constant times $q$, uniformly for $t\in I$. Hence,
$0\leq h_i(t)=O(q^{-1})$, and $|h_i'(t)| = O(q^{-2})$ $(t\in I)$.
Since $\|A(C)\|\leq 4$, the Neumann series gives
$R_C(t)=t^{-1}\sum_{k\geq 0} A(C)^k/t^k$. The diagonal of $A(C)$
is zero, so
\begin{equation}\label{eq:lcp-diagonal-bounds}
r_{i,C}(t) = t^{-1}+O(q^{-3}), \qquad
|r_{i,C}'(t)|\leq (t-4)^{-2} = O(q^{-2})
\qquad (t\in I).
\end{equation}
Writing $\delta := t - \beta_i$, since $Q_i(t) = 1/g_i(t)$, we obtain
\begin{equation}\label{eq:Q-local-formula}
Q_i(\beta_i+\delta)
=\frac{\delta}{a_i^2+\delta h_i(\beta_i+\delta)}.
\end{equation}
At $\delta=0$, the continuous extension has value $0$. By \eqref{eq:Q-local-formula},
the equation $F_{i,C}(\vartheta_i)=0$ yields
\[
(\vartheta_i-\beta_i)
\bigl(1-r_{i,C}(\vartheta_i)h_i(\vartheta_i)\bigr)
=a_i^2r_{i,C}(\vartheta_i).
\]
Since $r_{i,C}(\vartheta_i)h_i(\vartheta_i)=O(q^{-2})$, it follows
that
\begin{equation}\label{eq:lcp-shift-size}
\vartheta_i-\beta_i
=a_i^2r_{i,C}(\vartheta_i)\bigl(1+O(q^{-2})\bigr)
=\Theta(a_i^2/q).
\end{equation}
There is no further finite zero of $F_{i,C}$ in $I$.
Indeed, a zero $t<\beta_i$ would satisfy
$g_i(t)=1/r_{i,C}(t)=\Theta(q)$, whereas
\eqref{eq:green-expansion-detailed} gives
$g_i(t)<h_i(t)=O(q^{-1})$. At $t=\beta_i$, we have
$F_{i,C}(\beta_i)=-r_{i,C}(\beta_i)<0$. This proves~(1).

(2) Differentiating \eqref{eq:Q-local-formula}, we obtain
\begin{equation}\label{eq:Q-derivative}
Q_i'(\beta_i+\delta)
=\frac{a_i^2-\delta^2h_i'(\beta_i+\delta)}
{\bigl(a_i^2+\delta h_i(\beta_i+\delta)\bigr)^2}.
\end{equation}
For any fixed $L>0$ and $0\leq\delta\leq La_i^2/q$, we have
$\delta h_i/a_i^2=O(q^{-2})$ and
$\delta^2|h_i'|/a_i^2=O(q^{-4})$. Therefore, for $\beta_i\leq t\leq\beta_i + La_i^2/q$,
\begin{equation}\label{eq:lcp-uniform-slope}
Q_i'(t) = a_i^{-2} \big(1 + O(q^{-2})\big), \quad
F_{i,C}'(t) = a_i^{-2}\big(1 + O(q^{-2})\big).
\end{equation}
Taking $t=\vartheta_i$ proves the estimate for $D_{i,C}$.

For the exact identity in (2), we have for $t>\vartheta_i$,
\[
\bm{e}_{\widehat u_i}^{\top}(tI-A(\widehat{B}_i))^{-1}
\bm{e}_{\widehat{u}_i} = \frac{g_i(t)}{1-r_{i,C}(t)g_i(t)}
=\frac1{F_{i,C}(t)}.
\]
As $t\to\vartheta_i$ from above, the spectral expansion of the
left-hand side is
$\widehat z_i(\widehat u_i)^2/(t-\vartheta_i)+O(1)$.
Taking reciprocals and differentiating at $\vartheta_i$ yields
$D_{i,C}=\widehat z_i(\widehat u_i)^{-2}$. This completes (2).

\smallskip
(3) Since $\widehat B_i$ is a proper induced subgraph of the connected
graph $G$, we have $\vartheta_i<\lambda_1$.
By \eqref{eq:lcp-shift-size} and $a_i\geq q^{-8}$, we have
$\vartheta_i-\beta_i\geq c_0q^{-17}$ for some fixed $c_0>0$.
Since $\gamma=o(q^{-20})$, we obtain
$\lambda_2=\lambda_1-\gamma>\vartheta_i-\gamma>\beta_i$.
Fix $i$ and let $j\neq i$. Deleting $\widehat{u}_j$ from $G$
leaves the disjoint union of $\widehat{B}_i$ and
$B_j-\widehat{u}_j$. Cauchy interlacing gives
$\lambda_2\leq\lambda_1(G-\widehat{u}_j)
= \max\{\vartheta_i,\lambda_1 (B_j-\widehat{u}_j)\}$.
Since $\lambda_1(B_j-\widehat u_j)\leq\beta_j<\lambda_2$,
it follows that $\beta_i < \lambda_2\leq\vartheta_i<\lambda_1$, $i=1,2$.

Since $\lambda_1(G)$ and $\lambda_2(G)$
are the two roots of $F_{1,C}(x) F_{2,C}(x) - \tau_C(x)^2 = 0$, and $\tau_C(t)>0$,
we have $F_{i,C}(\lambda_2)\neq0$, and $\beta_i < \lambda_2\leq\vartheta_i<\lambda_1$ is strict at its middle
inequality.

\smallskip
Put $J:=[\lambda_2,\lambda_1]$. For $t\in J$, we have
$|t-\vartheta_i|\leq\gamma$. Together with
\eqref{eq:lcp-shift-size}, this gives
$0<t-\beta_i=O(a_i^2/q)$, since $a_i^2/q\geq q^{-17}$.
Thus, \eqref{eq:lcp-uniform-slope} holds throughout $J$.
The mean value theorem yields
\begin{equation}\label{eq:lcp-linearization}
F_{i,C}(t)=D_{i,C}(t-\vartheta_i)\bigl(1+O(q^{-2})\bigr)
\qquad(t\in J).
\end{equation}

\begin{claim}\label{claim:lcp-log-derivative}
For $t\in I$, we have
\begin{equation}\label{eq:lcp-log-derivative}
0\leq -\frac{\tau_C'(t)}{\tau_C(t)}\leq\mathrm{tr}\, R_C(t)
\leq\frac{s}{t-4}=O(\log q).
\end{equation}
\end{claim}

\begin{proof}[Proof of Claim \ref{claim:lcp-log-derivative}]
Fix $t\in I$ and write $R:=R_C(t)$. The Neumann series shows
that $R$ is entrywise nonnegative. For $u,v,w\in V(C)$, we have
$R_{uw}R_{wv}\leq R_{uv}R_{ww}$. Indeed, for $w\notin\{u,v\}$, we have
\[
R_{uv}-\frac{R_{uw}R_{wv}}{R_{ww}}
=\bigl((tI-A(C-w))^{-1}\bigr)_{uv}\geq0,
\]
where the last inequality again follows from the Neumann series.
Since $R_C'(t)=-R_C(t)^2$, we obtain
\[
0\leq-\tau_C'(t)
=\sum_{w\in V(C)} R_{\widehat{v}_1w} R_{w\widehat{v}_2}
\leq\tau_C(t)\, \mathrm{tr}\, R.
\]
Finally, each eigenvalue of $A(C)$ is at most $4$, and hence
$\mathrm{tr}\, R\leq s/(t-4)$. 
\end{proof}

Fix $x_*\in J$. Integrating \eqref{eq:lcp-log-derivative} gives
\[
\left|\log\frac{\tau_C(t)}{\tau_C(x_*)}\right|
\leq O(\log q)|t-x_*|
\leq O(\gamma\log q)=o(q^{-2})
\qquad(t\in J).
\]
Consequently,
$\tau_C(t)=\tau_C(x_*)\bigl(1+o(q^{-2})\bigr)$ uniformly on $J$.
Combining this with \eqref{eq:lcp-linearization}, at $t=\lambda_1$ and
$t=\lambda_2$ we obtain
\[
(\lambda_j-\vartheta_1)(\lambda_j-\vartheta_2) = \frac{\tau_C(x_*)^2}{D_{1,C}D_{2,C}}
\big(1 + O(q^{-2})\big) \qquad (j=1,2).
\]
It follows that
\[
(\lambda_j - (\vartheta_1+\vartheta_2)/2)^2
= \frac{(\vartheta_1-\vartheta_2)^2}{4}
+ \frac{\tau_C(x_*)^2}{D_{1,C}D_{2,C}}
\big(1+O(q^{-2})\big) \qquad (j=1,2).
\]
Both terms on the right are nonnegative, so taking square roots
introduces only a relative error $O(q^{-2})$ in the whole
expression, even when $\vartheta_1=\vartheta_2$.
Since $\lambda_2 < (\vartheta_1+\vartheta_2)/2 < \lambda_1$, subtracting 
$\lambda_2-(\vartheta_1+\vartheta_2)/2$ from 
$\lambda_1-(\vartheta_1+\vartheta_2)/2$ proves \eqref{eq:loaded-two-pole-formula}.
\end{proof}

\begin{remark}\label{rem:lcp-one-terminal}
The proofs of parts~\textnormal{(1)} and~\textnormal{(2)}, and the
uniform estimate \eqref{eq:lcp-uniform-slope}, use only
the assumptions on $B_i$ and the bound $\Delta(C)\leq4$.
They do not use the small-gap assumption. Accordingly, these
conclusions remain valid when $C$ is replaced by any connected
graph of maximum degree at most $4$, attached to $B_i$ by one
edge at a designated vertex. We shall use this observation for
paths. The small-gap assumption is used in part~\textnormal{(3)}.
\end{remark}

\begin{proposition}\label{prop:corridor-is-path}
The induced subgraph $C=G[V(G)\setminus(V(B_1)\cup V(B_2))]$ is a path with
endvertices $\widehat v_1$ and $\widehat v_2$.
\end{proposition}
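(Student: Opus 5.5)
We argue by contradiction: assume $C$ is not the path $P_s$ with ends $\widehat v_1,\widehat v_2$, and build a competitor $G'$ on $n$ vertices by replacing $C$ with that path. Concretely, $G'$ is $B_1\sqcup P_s\sqcup B_2$ plus the edges $\widehat u_1 v_1$ and $\widehat u_2 v_s$. We will show $\gap(G')<\gap(G)$, which contradicts minimality.

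By Remark \ref{rem:lcp-one-terminal}, parts (1) and (2) of Lemma \ref{lem:loaded-corridor-poles} apply to $G'$ with $C$ replaced by $P_s$. This gives poles $\vartheta_i'$ and slopes $D_{i,P_s}=z_i(\widehat u_i)^{-2}(1+O(q^{-2}))$. Part (3) needs the small-gap hypothesis, so for $G'$ we rederive its conclusion directly. The characteristic polynomial factorization preceding Lemma \ref{lem:loaded-corridor-poles} is purely algebraic and holds for $G'$, so $\lambda_1(G'),\lambda_2(G')$ are the two roots of $F_{1,P}F_{2,P}-\tau_P^2=0$ above $\lambda_1(G'_0)$.

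\textbf{Step 1: the poles.} By \eqref{eq:lcp-diagonal-bounds}, $r_{i,C}$ and $r_{i,P}$ both equal $t^{-1}+O(q^{-3})$. Hence by \eqref{eq:loaded-pole-and-slope},
\[
\vartheta_i'-\vartheta_i=O\bigl(z_i(\widehat u_i)^2 q^{-3}\bigr),
\]
but these errors are not small compared with $\tau$. So the competitor must be chosen more carefully.

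\textbf{Step 2: symmetrize.} Mimic the final step of the paper and take the better of two symmetric competitors: $G'_k$ consists of two copies of $B_k$ joined by $P_s$ at $\widehat u_k$, for $k=1,2$. Then $\vartheta'_1=\vartheta'_2$, so the first term under the square root vanishes. By the two-pole analysis (Lemma \ref{lem:uniform-two-pole}(2), or the same linearization as \eqref{eq:lcp-linearization}),
\[
\gap(G'_k)=(1+O(q^{-2}))\,2\tau_{P}(x)\,z_k(\widehat u_k)^2 .
\]
Meanwhile \eqref{eq:loaded-two-pole-formula} gives
\[
\gap(G)\ge(1+O(q^{-2}))\,2\tau_C(x_*)\,z_1(\widehat u_1)z_2(\widehat u_2).
\]
Choose $k$ with $z_k(\widehat u_k)\le z_j(\widehat u_j)$. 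Then $z_k(\widehat u_k)^2\le z_1(\widehat u_1)z_2(\widehat u_2)$, and it suffices to show
\[
\tau_{P_s}(x)\le \bigl(1-\Omega(1)\bigr)\,\tau_C(x_*)
\]
for comparable $x$.

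\textbf{Step 3: comparing $\tau$.} This is the main obstacle. By Lemma \ref{lem:R(F-x)}, $\tau_{P_s}(x)=1/p_s(x)\le\sigma(x)^{-s}=x^{-s}(1+O(s/x^2))$. Since $s=\Theta(q\log q)$, we have $s/x^2=o(1)$, so $\tau_{P_s}(x)=x^{-s}(1+o(1))$. Lemma \ref{lem:corridor-transfer} gives $\tau_C(x)\ge x^{-(s-1)}$, a factor $x\sim q$ larger. Two points need care.

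First, the evaluation points differ. $\tau_C$ is evaluated at $x_*\in[\lambda_2,\lambda_1]$, while $\tau_P$ is evaluated near $\lambda_2(G'_k)$. Both points lie in $I=[q-K,q+K]$ with $K=O(1)$, so $x^{-s}$ changes by a factor $e^{O(s/q)}=q^{O(1)}$, which could swamp a single factor of $q$. To control this, compare the actual locations: both $\lambda_2(G)$ and $\lambda_2(G'_k)$ are within $O(q^{-17})$ of $\beta$-values. Using Claim \ref{claim:lcp-log-derivative}, $\tau$ moves by only a factor $1+O(\log q\cdot|\Delta x|)$. We need $\lambda_1(B_k)$ within $o(1/\log q)$ of where $\tau_C$ is evaluated, which holds when $\lambda_1(B_1)\approx\lambda_1(B_2)$.

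Second, if $\beta_1,\beta_2$ differ by order $1$, use the term $(\vartheta_1-\vartheta_2)^2$ instead. In that case $\gap(G)\ge|\vartheta_1-\vartheta_2|\gg$ any symmetric competitor's gap, which is at most $q^{-s+O(1)}$. If they are close, $|\beta_1-\beta_2|=o(1/\log q)$, and the $\tau$ comparison yields $\gap(G'_k)\le\gap(G)\cdot O(q^{-1})<\gap(G)$.

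Handling this dichotomy between the two regimes cleanly is where we expect the most work.
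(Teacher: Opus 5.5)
\paragraph{Gap: the competitors do not have $n$ vertices.} Your competitor $G'_k$ consists of two copies of $(B_k,\widehat u_k)$ joined by $P_s$. It has $2m_k+s$ vertices, where $m_k=|V(B_k)|$, while $n=m_1+m_2+s$. Theorem~\ref{thm:terminal-extraction} only gives $q\le m_i\le q+8$, so $m_1\ne m_2$ is not excluded at this stage, and then $G'_k$ is not a legitimate competitor.

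To have order $n$, the path must have $s_k=n-2m_k=s+(m_j-m_k)$ vertices. Writing $a_i:=z_i(\widehat u_i)$, you then get
\[
\gap(G'_k)\approx\frac{2a_k^2}{p_{s_k}(x_*)}\approx 2a_k^2\,\sigma(x_*)^{-s}\,\sigma(x_*)^{m_k-m_j}.
\]
The factor $\sigma(x_*)^{m_k-m_j}$ can be as large as $q^{8}$. That swamps the single factor of $q$ you gain from $\tau_C(x_*)\ge x_*^{-(s-1)}$.

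Your rule of taking the $k$ with smaller $a_k$ cannot repair this. Nothing ties the ordering of $a_1,a_2$ to the ordering of $m_1,m_2$. Also, only $a_i\ge q^{-8}$ is known, so $a_k/a_j$ may itself be a large power of $q$.

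\paragraph{The missing idea.} Use both order-$n$ competitors $G^{(1)},G^{(2)}$ at once, and exploit $s_1+s_2=2s$:
\[
\min_k\gap(G^{(k)})\le\sqrt{\gap(G^{(1)})\gap(G^{(2)})}=\frac{2a_1a_2}{\sqrt{p_{s_1}(x_*)p_{s_2}(x_*)}}\bigl(1+O(q^{-2})\bigr)=\frac{2a_1a_2}{p_s(x_*)}\bigl(1+O(q^{-2})\bigr).
\]
The powers of $\sigma(x_*)$ cancel exactly. This removes the $a$-asymmetry and the size asymmetry simultaneously. Comparing with $\gap(G)\ge 2a_1a_2\tau_C(x_*)(1-O(q^{-2}))\ge q\,a_1a_2/p_s(x_*)$ then gives the contradiction.

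The final step you say you are mimicking (Lemma~\ref{lem:Bi-clique}) does exactly this. It uses path lengths $s\pm d$ and the product identity \eqref{eq:completion-reflection-product}, rather than picking one side.

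\paragraph{Step 3 is routine, and the dichotomy is vacuous.} The issue you flag as the main obstacle is not one. As stated, your dichotomy also leaves a middle regime uncovered, but that does not matter, because the whole case split is vacuous.

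Lemma~\ref{lem:loaded-corridor-poles}(3) already gives $\lambda_2<\vartheta_i<\lambda_1$. So both poles lie within $\gamma/2$ of $x_*=(\lambda_1+\lambda_2)/2$, and $|\vartheta_1-\vartheta_2|<\gamma$ automatically.

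Also, $\vartheta_i-\beta_i=\Theta(a_i^2/q)$ is $o(1)$. The quantity $c_0q^{-17}$ you quote is a lower bound for this distance, not an upper bound.

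For the competitor, the zero $\eta_k$ of $Q_k-r_{s_k}$ satisfies $|\eta_k-\vartheta_k|=O(a_k^2q^{-3})$. This holds because $r_{k,C}$ and $r_{s_k}$ both equal $t^{-1}+O(q^{-3})$ on $I$, while the slope is $a_k^{-2}(1+O(q^{-2}))$. The competitor's two top eigenvalues are displaced from $\eta_k$ by only $a_k^2\tau_{s_k}(1+O(q^{-2}))$.

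Combined with $(\log p_{s_k})'=O(\log q)$ on $I$, this lets you evaluate every path factor at the common point $x_*$ at a relative cost of $1+o(q^{-2})$. That is what makes the product identity above legitimate.
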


\begin{proof}
Suppose to the contrary that $C$ is not a path with these
endvertices. Put $x_*:=(\lambda_1+\lambda_2)/2$.
By Lemma~\ref{lem:loaded-corridor-poles}, we have $x_*=q+O(1)$ and
\begin{equation}\label{eq:cp-original-root-location}
|\vartheta_i-x_*|\leq\gamma/2
\qquad(i=1,2).
\end{equation}
We first obtain a lower bound for $\gap(G)$. We shall then
construct two symmetric graphs of order $n$ and show that at
least one has a smaller spectral gap.

\begin{claim}\label{claim:cp-original-gap}
For all sufficiently large $q$, we have
\begin{equation}\label{eq:cp-original-gap}
\gap(G)\geq\frac{q\,a_1a_2}{p_s(x_*)}.
\end{equation}
\end{claim}

\begin{proof}[Proof of Claim~\ref{claim:cp-original-gap}]
By \eqref{eq:loaded-pole-and-slope} and
\eqref{eq:loaded-two-pole-formula}, we obtain
\begin{equation}\label{eq:cp-coupling-lower}
\gap(G)\geq2a_1a_2\tau_C(x_*)\bigl(1-O(q^{-2})\bigr).
\end{equation}
If $\dist_C(\widehat v_1,\widehat v_2)=s-1$, a shortest path
between the attachment vertices would contain every vertex of
$C$. Any additional edge would shorten their distance, so $C$
would be precisely this path, contrary to our assumption.
Thus, $d:=\dist_C(\widehat v_1,\widehat v_2)\leq s-2$.
The term corresponding to one shortest path in
$R_C(x_*)=x_*^{-1}\sum_{k\geq0}A(C)^k/x_*^k$ gives
$\tau_C(x_*)\geq x_*^{-d-1}\geq x_*^{-(s-1)}$.
On the other hand, we have
$p_s(x_*)\geq\sigma(x_*)^s$. Since
$\sigma(x_*)/x_*=1+O(q^{-2})$ and $s=O(q\log q)$, we obtain
\[
\tau_C(x_*)p_s(x_*)
\geq x_*\Big(\frac{\sigma(x_*)}{x_*}\Big)^s
= q\big(1 + o(1)\big).
\]
Combining this with \eqref{eq:cp-coupling-lower} proves
\eqref{eq:cp-original-gap} for all sufficiently large $q$.
\end{proof}

For $i\in\{1,2\}$, put $s_i:=n-2m_i$. Then $s_i=s+O(1)$,
$s_1+s_2=2s$, and $s_i\geq2$ for sufficiently large $q$.
Let $G^{(i)}$ be obtained from two vertex-disjoint copies of the
rooted graph $(B_i,\widehat u_i)$ and a disjoint path $P_{s_i}$
by joining the two copies of $\widehat u_i$ to the two
endvertices of the path, respectively. The graph $G^{(i)}$ is
connected and has $2m_i+s_i=n$ vertices. 

For $k\geq2$ and $t>2$, put $r_k(t):=p_{k-1}(t)/p_k(t)$ and
$\tau_k(t):=1/p_k(t)$. For each $i$, define
$\widehat F_i(t):=Q_i(t)-r_{s_i}(t)$.

\begin{claim}\label{claim:cp-root-localization}
For each $i$, the function $\widehat F_i$ has a unique zero
$\eta_i>\beta_i$. Moreover,
\begin{equation}\label{eq:cp-root-localization}
|\eta_i-\vartheta_i|=O(a_i^2q^{-3}),\qquad
|\eta_i-x_*|=O(q^{-3}).
\end{equation}
\end{claim}

\begin{proof}[Proof of Claim~\ref{claim:cp-root-localization}]
Apply parts~\textnormal{(1)} and~\textnormal{(2)} of
Lemma~\ref{lem:loaded-corridor-poles} to one copy of $B_i$
attached to $P_{s_i}$ at an endvertex, as justified in
Remark~\ref{rem:lcp-one-terminal}. We obtain a unique zero
$\eta_i>\beta_i$ of $\widehat F_i$, with
$\eta_i-\beta_i=\Theta(a_i^2/q)$.
Let $L$ be a sufficiently large fixed constant and put
$J_i:=[\beta_i,\beta_i+La_i^2/q]$.
Both $\eta_i$ and $\vartheta_i$ belong to $J_i$.
The uniform estimate \eqref{eq:lcp-uniform-slope}
for the path connector gives
\begin{equation}\label{eq:cp-uniform-derivative}
\widehat F_i'(t)=a_i^{-2}\bigl(1+O(q^{-2})\bigr)
\qquad(t\in J_i).
\end{equation}
By \eqref{eq:lcp-diagonal-bounds} and the same Neumann-series
estimate for a path, both $r_{i,C}(t)$ and $r_{s_i}(t)$ equal
$t^{-1}+O(q^{-3})$ uniformly on $I$. Hence,
$\widehat{F}_i(\vartheta_i)
=r_{i,C}(\vartheta_i)-r_{s_i}(\vartheta_i)=O(q^{-3})$.
Since $\widehat F_i(\eta_i)=0$, the mean value theorem and
\eqref{eq:cp-uniform-derivative} give
$|\eta_i-\vartheta_i|=O(a_i^2q^{-3})$.
Together with \eqref{eq:cp-original-root-location}, this yields
$|\eta_i-x_*|\leq O(a_i^2q^{-3})+\gamma/2=O(q^{-3})$.
\end{proof}

\begin{claim}\label{claim:cp-symmetric-gaps}
For $i=1,2$, we have
\begin{equation}\label{eq:cp-symmetric-gap}
\gap(G^{(i)})
=\frac{2a_i^2}{p_{s_i}(x_*)}\bigl(1+O(q^{-2})\bigr).
\end{equation}
\end{claim}

\begin{proof}[Proof of Claim~\ref{claim:cp-symmetric-gaps}]
Fix $i$. Consider the graph $G^{(i)}$, we have 
that a number $t>\beta_i$ is an eigenvalue of $G^{(i)}$ if and
only if
\[
\det\begin{bmatrix}
\widehat{F}_i(t) & -\tau_{s_i}(t) \\
-\tau_{s_i}(t) & \widehat{F}_i(t)
\end{bmatrix}=0.
\]
Equivalently, $t$ satisfies one of the two equations
\begin{equation}\label{eq:cp-symmetric-equations}
Q_i(t)=r_{s_i}(t)+\tau_{s_i}(t),\qquad
Q_i(t)=r_{s_i}(t)-\tau_{s_i}(t).
\end{equation}
For either choice of sign, Lemma \ref{lem:R(F-x)} gives
\[
r_{s_i}(t)\pm\tau_{s_i}(t)
=\frac12(\bm e_1\pm\bm e_{s_i})^{\top}
R_{P_{s_i}}(t)(\bm e_1\pm\bm e_{s_i}).
\]
These functions are positive and strictly decreasing for $t>2$,
since $R_{P_{s_i}}(t)$ is positive definite and its derivative
is $-R_{P_{s_i}}(t)^2$. Also,
$0<r_{s_i}(t)\pm\tau_{s_i}(t)\leq(t-2)^{-1}$.
In contrast, $Q_i$ is strictly increasing on $(\beta_i,\infty)$,
with $Q_i(\beta_i)=0$ and $Q_i(t)\to\infty$ as $t\to\infty$.
Thus, each equation in \eqref{eq:cp-symmetric-equations} has a
unique root above $\beta_i$. Denote these roots by $\mu_{i,+}$
and $\mu_{i,-}$, respectively.

By \eqref{eq:Q-local-formula},
$Q_i(\beta_i+La_i^2/q)=(L/q)(1+O(q^{-2}))$.
The bound $(t-2)^{-1}=q^{-1}(1+O(q^{-1}))$ on $J_i$ shows that
both roots belong to $J_i$ when $L$ is sufficiently large.
Moreover, $\widehat F_i(\mu_{i,\pm})=\pm\tau_{s_i}(\mu_{i,\pm})$
and $\widehat F_i$ is strictly increasing, so
$\mu_{i,-}<\eta_i<\mu_{i,+}$.
Deleting the two terminal attachment vertices from $G^{(i)}$
leaves $2(B_i-\widehat u_i)\sqcup P_{s_i}$, whose spectral
radius is less than $\beta_i$.
By interlacing, $G^{(i)}$ has at most two eigenvalues above
$\beta_i$. Since both roots reconstruct eigenvectors, we obtain
$\mu_{i,+}=\lambda_1(G^{(i)})$ and
$\mu_{i,-}=\lambda_2(G^{(i)})$.

The mean value theorem and \eqref{eq:cp-uniform-derivative} give
\begin{equation}\label{eq:cp-root-displacements}
|\mu_{i,\pm}-\eta_i|
=a_i^2\tau_{s_i}(\mu_{i,\pm})\bigl(1+O(q^{-2})\bigr).
\end{equation}
Since $\tau_{s_i}(t)\leq\sigma(t)^{-s_i}$, $s_i=\Theta(q\log q)$ and $\sigma(t)=\Theta(q)$ uniformly
on $I$, it follows that
$\tau_{s_i}(t)=o(q^{-M})$ uniformly on $I$ for every fixed
$M>0$. Thus, \eqref{eq:cp-root-displacements} and
Claim~\ref{claim:cp-root-localization} imply that
$|\mu_{i,\pm}-x_*|=O(q^{-3})$.

To evaluate the path terms at the common parameter $x_*$,
factor $p_{s_i}$ over the adjacency eigenvalues of $P_{s_i}$.
For $t\in I$, we obtain
\begin{equation}\label{eq:cp-path-log-derivative}
0<\frac{p_{s_i}'(t)}{p_{s_i}(t)}
=\sum_{k=1}^{s_i}\frac1{t-\lambda_k(P_{s_i})}
\leq\frac{s_i}{t-2}=O(\log q).
\end{equation}
Integrating between $\mu_{i,\pm}$ and $x_*$, we infer that
\[
\tau_{s_i}(\mu_{i,\pm})
=\tau_{s_i}(x_*)\bigl(1+O(q^{-3}\log q)\bigr)
=\frac{1+o(q^{-2})}{p_{s_i}(x_*)}.
\]
Substituting this into \eqref{eq:cp-root-displacements} and
adding the two displacements gives
\eqref{eq:cp-symmetric-gap}.
\end{proof}

We now compare the two graphs $G^{(1)}$ and $G^{(2)}$ simultaneously.
Since $s_1+s_2=2s$ and $s_i=s+O(1)$, we have
\begin{equation}\label{eq:cp-path-product}
\frac{p_{s_1}(x_*)p_{s_2}(x_*)}{p_s(x_*)^2}
=
\frac{(1-\sigma(x_*)^{-2s_1-2})(1-\sigma(x_*)^{-2s_2-2})}
{(1-\sigma(x_*)^{-2s-2})^2}
=1+o(q^{-2}).
\end{equation}
Indeed, the powers of $\sigma(x_*)$ and the common denominators
cancel, and each of the remaining negative powers is smaller
than every fixed negative power of $q$.
By Claim~\ref{claim:cp-symmetric-gaps} and
\eqref{eq:cp-path-product}, we obtain
\begin{align*}
\min_{i\in\{1,2\}}\gap(G^{(i)})
&\leq\sqrt{\gap(G^{(1)})\gap(G^{(2)})}\\
&=\frac{2a_1a_2}{\sqrt{p_{s_1}(x_*)p_{s_2}(x_*)}}
\big(1+O(q^{-2})\big) \\
&=\frac{2a_1a_2}{p_s(x_*)}\big(1+O(q^{-2})\big) < \frac{q\,a_1a_2}{p_s(x_*)}
\leq\gap(G)
\end{align*}
for all sufficiently large $q$, where the last inequality is
Claim~\ref{claim:cp-original-gap}.
Both comparison graphs are connected and have order $n$.
This contradicts the minimality of $\gap(G)$.
Thus, $C$ is a path with endvertices $\widehat v_1$ and
$\widehat v_2$.
\end{proof}

\subsection{The induced subgraph $G[V(B_i)]$ is a clique}

Let $F$ be a connected graph with $u\in V(F)$. For $s\geq 1$, let
$X_s(F,u)$ be the graph obtained from $F$ by adding at $u$ a pendant path
$u=y_0,y_1,\ldots,y_s$ of $s$ edges. Let
$\lambda:=\lambda_1(X_s(F,u))$
and let $\bm{z}$ be the unit Perron vector of $X_s(F,u)$.
Write $\eta:= z_{y_s}$ for the Perron-vector coordinate at the leaf vertex of the
pendant path.

We assume that $|V(F)| = m$ and that there are constants $C,D>0$,
independent of $m$, such that
$m - C\leq\lambda\leq m$, $s = \Theta(m\log m)$.
Choose $w\in V(F)$ such that $z_w = \max_{v\in V(F)} z_v$.
We further assume that $\mathrm{dist}_F(u,w)\leq D$.

For comparison, let $\mu_m:=\lambda_1(X_s(K_m,u))$,
where $u$ is any vertex of $K_m$. Let $\bm{z}^{(m)}$ be the
unit Perron eigenvector of $X_s(K_m,u)$, and define
$a_m = z^{(m)}_u$. Thus $a_m$ is the Perron-vector coordinate at the vertex of $K_m$
to which the pendant path is attached.

We will now show that each $B_i$ induces a clique. To outline the proof: we may understand the eigenvalue and eigenvector information of $X_s(F, u)$ or $X_s(K_m,u)$ very well; this allows us to compute the spectral gap of the graph obtained by joining two copies of either $G_1$ or $G_2$ by a path; we may then compare this to $\gap(G)$ and subsequently compare it to the double kite. 

\begin{lemma}\label{lem:complete-block-estimates}
Let $\bm{z}$ be the positive unit Perron vector of $X_s(K_m,u)$. Then
\begin{equation}
\lambda_1(X_s(K_m, u)) = m - 1 + O(m^{-2}), \qquad
a_m=z_u^{(m)} = m^{-1/2}\big(1 + O(m^{-2})\big).
\label{eq:complete-block-pole-mass}
\end{equation}
Moreover, if $e$ is any edge of $K_m$, then
\begin{equation}
\lambda_1(X_s(K_m,u)) - \lambda_1(X_s(K_m-e,u))\geq\frac{\kappa}{m}
\label{eq:missing-edge-pole-gain}
\end{equation}
for an absolute $\kappa>0$ and all sufficiently large $m$.
\end{lemma}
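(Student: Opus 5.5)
Two claims need proof: the eigenvalue and Perron-coordinate estimate for $X_s(K_m,u)$, and the gain of order $1/m$ from having the edge $e$ present. In both, the pendant path enters only through $r_s(t):=p_{s-1}(t)/p_s(t)=t^{-1}+O(t^{-3})$, which is an exponentially small perturbation of a half-infinite path.

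\textbf{Step 1 (the complete case).} Write $t$ for the eigenvalue. By symmetry, every vertex of $K_m-u$ carries the same Perron entry $b$. The eigenvalue equation at these vertices gives $(t-m+2)b=z_u$. As in the proof of Lemma \ref{lem:DK-equations}, the path side reduces to $z_{y_1}=r_s(t)z_u$. So $\mu_m$ is the unique root above $m-1$ of
\[
q_m(t)=t-\frac{m-1}{t-m+2}=r_s(t)=t^{-1}+O(m^{-3}).
\]
Put $t=m-1+\delta$. Then $q_m(t)=\delta(m+\delta)/(1+\delta)$, and this forces $\delta=m^{-2}(1+O(m^{-1}))$. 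In particular $\mu_m=m-1+O(m^{-2})$.

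\textbf{Step 2 (the coordinate $a_m$).} I would use the exact identity from Lemma \ref{lem:loaded-corridor-poles}(2) and Remark \ref{rem:lcp-one-terminal}. Take $B=K_m$ and connector $C=P_s$, attached at its endpoint through the extra edge (the indexing shifts by one, which is harmless). This gives $a_m^{-2}=F'(\mu_m)$, where $F(t)=Q(t)-r_s(t)$ and $Q=\phi(K_m,t)/\phi(K_m-u,t)=q_m(t)$. Now $q_m'(t)=1+(m-1)/(t-m+2)^2=m(1+O(m^{-1}))$, while $r_s'=O(m^{-2})$. Hence $a_m^{2}=m^{-1}(1+O(m^{-1}))$.

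The stated relative error is $O(m^{-2})$, so the first-order term must cancel. Since $\delta=O(m^{-2})$, we have $(t-m+2)^{-2}=1-O(m^{-2})$. Then $q_m'=1+(m-1)(1+O(m^{-2}))=m+O(m^{-1})$, and $r_s'=-t^{-2}+\dots=O(m^{-2})$. Therefore $a_m^{-2}=m(1+O(m^{-2}))$, which is exactly the claim.

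\textbf{Step 3 (the missing edge).} Let $F=K_m-e$ and write $\lambda'=\lambda_1(X_s(F,u))$. First, $\lambda'$ is within $O(m^{-2})$ of $\lambda_1(F)$, by \eqref{eq:lcp-shift-size} with $a\le 1$ and $r=O(1/m)$. The same holds for $\mu_m$ and $\lambda_1(K_m)=m-1$. So it suffices to show $(m-1)-\lambda_1(K_m-e)\ge 2\kappa/m$.

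I would compute $\lambda_1(K_m-e)$ from the equitable partition into the two endpoints of $e$ (value $x$) and the remaining $m-2$ vertices (value $y$):
\[
\lambda x=(m-2)y,\qquad \lambda y=2x+(m-3)y.
\]
This gives $\lambda^2-(m-3)\lambda-2(m-2)=0$. Its positive root is
\[
\lambda=\frac{m-3+\sqrt{(m+1)^2-8}}{2}=m-1-\frac{2}{m}+O(m^{-2}).
\]
The deficit is therefore $2/m$ up to lower-order terms, and any $\kappa<2$ works, say $\kappa=1$.

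The main obstacle is bookkeeping: the perturbation shifts must be controlled to $o(1/m)$, uniformly in where $e$ sits relative to $u$. This is automatic, because the attachment shift is $\Theta(a^2/m)\le O(1/m)$. More precisely it is $a^2 r(1+O(m^{-2}))$, and $a^2$ is $\approx 1/m$ in both graphs, so the shift is $O(m^{-2})$ in each. A cruder alternative bound, $0\le\text{shift}\le 1/(t-4)$, would not suffice. So I would invoke \eqref{eq:lcp-shift-size} together with the estimate $a^2\le 2/m$. That estimate comes from the near-uniform Perron vector of $K_m-e$, which the equitable partition makes explicit.
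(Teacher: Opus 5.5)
Your proposal is correct. Step 1 is the paper's argument, namely the secular equation $q_m(t)=r_s(t)$ with $t=m-1+\delta$. Steps 2 and 3 take genuinely different routes.

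\textbf{The coordinate $a_m$.} The paper bounds the pendant-path mass by $O(m^{-2})z_u^2$. It then uses $z_u=(\mu_m-m+2)b$ to get $b=z_u(1+O(m^{-2}))$ and reads $z_u$ off the normalization. You instead invoke the residue identity $D=\widehat z(\widehat u)^{-2}$ from Lemma~\ref{lem:loaded-corridor-poles}(2). This reduces everything to $q_m'(\mu_m)-r_s'(\mu_m)=m+O(m^{-1})$. That identity is proved there in full generality, so the citation is legitimate. There is also no index shift: $X_s(K_m,u)$ is exactly $K_m$ joined to $P_s$ by one edge, so the connector term is exactly $r_s$.

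\textbf{The missing edge.} The paper uses the Perron vector $\bm w$ of $X_s(K_m-e,u)$ as a Rayleigh test vector for $X_s(K_m,u)$, which gains $2w_aw_b$. It must then show $w_a,w_b=m^{-1/2}(1+O(m^{-1}))$, separately for $u\notin e$ and $u\in e$. You instead compare both graphs with their path-free versions.
\begin{itemize}
\item The main term $\lambda_1(K_m-e)=m-1-2/m+O(m^{-2})$ does not depend on where $u$ sits.
\item All $u$-dependence goes into the attachment shift.
\item \eqref{eq:lcp-shift-size} controls that shift uniformly once you know $a^2\le 2/m$ and $\lambda_2(K_m-e)=0$, so that $h=O(1/m)$.
\end{itemize}
As you note at the end, the bound $a\le 1$ in your first sentence of Step 3 only gives an $O(1/m)$ shift, which is useless. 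The write-up should lead with $a^2\le 2/m$. A minor bookkeeping slip: ``$(m-1)-\lambda_1(K_m-e)\ge 2\kappa/m$'' and ``any $\kappa<2$'' refer to different normalizations of $\kappa$.

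\textbf{What each route buys.} Yours avoids the case split and exhibits the deficit $2/m+O(m^{-2})$ directly, but it imports the resolvent machinery. The paper's route is elementary and self-contained, and it also yields $2w_aw_b\approx 2/m$.
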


\begin{proof}
Write $z_i:= z_{y_i}^{(m)}$, $i=0,1,\ldots,s$.
The eigenvalue\,--\,eigenvector equation at $y_s$ is $\mu_m z_s = z_{s-1}$. Thus
$z_{s-1} = p_1(\mu) z_s$. At the vertex $y_{s-1}$,
$\mu_m z_{s-1} = z_{s-2} + z_s$, so
$z_{s-2} =(\mu_m^2 - 1) z_s = p_2(\mu_m) z_s$.
By induction, we have $z_{s-k} = p_k(\mu_m) z_s$
($0\leq k\leq s$). Taking $k=s$ and $k=s-1$, respectively, gives
$z_u = p_s(\mu) z_s$, $z_1 = p_{s-1}(\mu) z_s$. Therefore
\begin{equation}\label{eq:first-end-ratio}
z_1 = \frac{p_{s-1}(\mu_m)}{p_s(\mu_m)} z_u = r_s(\mu_m) z_u, \quad 
\text{where}~~ r_s(x):= \frac{p_{s-1}(x)}{p_s(x)}.
\end{equation}

The recurrence for $p_k(x)$ gives, for $k\geq2$,
\begin{equation}\label{eq:complete-ratio-recurrence}
r_k(x) = \frac{p_{k-1}(x)}{xp_{k-1}(x)-p_{k-2}(x)}
= \frac{1}{x-r_{k-1}(x)}.
\end{equation}
It follows that $0 < r_k(x)\leq\frac1{x-1}$ for $x\geq 3$. Furthermore,
\[
r_k(x) - \frac{1}{x} = \frac{1}{x-r_{k-1}(x)} - \frac{1}{x}
= \frac{r_{k-1}(x)}{x (x - r_{k-1}(x))}.
\]
Since $r_{k-1}(x)=O(x^{-1})$, uniformly in $k$, we obtain
$r_k(x) - 1/x = O(x^{-3})$. 

Since all $m-1$ clique vertices different from $u$ have the same Perron coordinate of $\bm{z}$, 
denote this common coordinate by $b$.
Let $v\in V(K_m)\setminus\{u\}$. The eigenvalue\,--\,eigenvector equation at $v$ is
$\mu_m b = z_u + (m-2)b$. Hence $z_u = (\mu_m - m + 2)b$.
At the vertex $u$, we have $\mu_m z_u = (m-1)b + z_1$. Using $z_1 = r_s(\mu_m)z_u$, 
we obtain $(\mu_m - r_s(\mu_m)) z_u = (m - 1)b$. Combining with $z_u = (\mu_m - m + 2)b$ gives
\begin{equation}\label{eq:temporary-2}
\big(\mu_m - r_s(\mu_m)\big) (\mu_m - m + 2) = m - 1.
\end{equation}

Now, we estimate $\mu_m$. Since $K_m$ is a proper subgraph of $X_s(K_m, u)$,
$\mu_m > m - 1$. Also, the maximum degree of $X_s(K_m, u)$ is $m$, attained at $u$. Hence
$\mu_m\leq m$.
Write $\mu_m = m - 1 + \varepsilon$, where $0 < \varepsilon\leq 1$.
Equation \eqref{eq:temporary-2} becomes
$(m - 1 + \varepsilon - r_s(\mu_m)) (1 + \varepsilon) = m - 1$.
Expanding and cancelling $m-1$ gives
$m\varepsilon + \varepsilon^2 = r_s(\mu_m)(1 + \varepsilon)$.
Since $\mu_m = \Theta(m)$, $r_s(\mu_m) = O(m^{-1})$. Hence,
$m\varepsilon=O(m^{-1})$, and hence $\varepsilon = O(m^{-2})$. Therefore $\mu_m = m - 1 + O(m^{-2})$.

For $1\leq j\leq s$, by $z_{s-k} = p_k(\mu_m) z_s$ ($0\leq k\leq s$) we have
\[
\frac{z_j}{z_{j-1}} = \frac{p_{s-j}(\mu_m)}{p_{s-j+1}(\mu_m)}
= r_{s-j+1}(\mu_m).
\]
Since $r_k(\mu_m)\leq (\mu_m - 1)^{-1}$, we have
$z_j\leq z_u/(\mu_m - 1)^j$. Consequently,
\begin{equation}\label{eq:sum-mass}
\sum_{j=1}^s z_j^2 \leq z_u^2 \sum_{j=1}^{\infty}
\frac{1}{(\mu_m - 1)^{2j}} = \frac{z_u^2}{(\mu_m - 1)^2 - 1}
= O(m^{-2}) z_u^2.
\end{equation}

In view of $z_u = (\mu_m - m + 2) b$ and $\mu_m = m - 1 + O(m^{-2})$, we have 
$b = z_u (1 + O(m^{-2}))$.
The normalization $\|\bm{z}\|_2 = 1$, together with \eqref{eq:sum-mass}, gives
$1 = z_u^2 + (m-1)b^2 + O(m^{-2}) z_u^2$. Using $b = z_u (1 + O(m^{-2}))$,
\[
1 = z_u^2 + (m-1)z_u^2\big(1 + O(m^{-2})\big) + O(m^{-2})z_u^2
= z_u^2\big(m + O(m^{-1})\big).
\]
Therefore $z_u = m^{-1/2} (1 + O(m^{-2}))$. 

Next, we prove \eqref{eq:missing-edge-pole-gain}.
Let $e=ab$ be an edge of $K_m$, and let $\bm{w}$ be the positive unit Perron vector of
$X_s(K_m - e, u)$. Write $\lambda:= \lambda_1(X_s(K_m-e, u))$ and $w_j:=w_{y_j}$ for $j=0,1,2,\ldots,s$.
Since $X_s(K_m-e,u)\subseteq X_s(K_m,u)$, we have $\lambda\leq m - 1 + O(m^{-2})$.
The unit vector that is constant on the $m$ clique vertices and zero on
the pendant path gives
\[
 \lambda\geq \frac{2|E(K_m-e)|}{m}
 =m-1-\frac2m.
\]
Consequently $\lambda=m-1+O(m^{-1})$.
Moreover, the same path recurrence as above gives
$\sum_{j=1}^s w_j^2 = O(m^{-2}) w_u^2 = O(m^{-2})$.

\emph{Case 1}: The deleted edge does not contain $u$, i.e., $u\notin e$.
The two vertices $a,b$ have the same Perron coordinate; denote it by $x$. The $m-3$ vertices in
$V(K_m)\setminus\{u,a,b\}$ also have a common coordinate, denoted by $y$. Put
$r:= w_u$. The eigenvalue\,--\,eigenvector equation gives
$\lambda x = (m-3)y + r$, $\lambda y = 2x + (m-4)y + r$, and
$\lambda r = 2x + (m-3)y + w_1$. It follows that $x,y,r$ are asymptotically equal. 
More specifically, we have 
\begin{equation}\label{eq:asy-equal}
\frac{x}{y} = \frac{\lambda + 1}{\lambda + 2}
= 1 + O(m^{-1}), \qquad 
\frac{r}{y} = 1 + O(m^{-1}),
\end{equation}
since $\lambda = m - 1 + O(m^{-1})$.
The normalization of $\bm{w}$, together with
$\sum_{j=1}^s w_j^2 = O(m^{-2})$, now gives
$1 = 2x^2 + (m-3)y^2 + r^2 + O(m^{-2}) = my^2\big(1 + O(m^{-1})\big)$.
Therefore $y = m^{-1/2} (1 + O(m^{-1}))$.
Equation \eqref{eq:asy-equal} gives
$x = m^{-1/2}(1 + O(m^{-1}))$, $r = m^{-1/2}(1 + O(m^{-1}))$. 
In particular, both vertices $a,b$ of the deleted edge have coordinate
$m^{-1/2} (1 + O(m^{-1}))$.
\smallskip

\noindent\emph{Case 2}: The deleted edge $e$ contains $u$.
Suppose $e=ua$. Write $x:=w_a$, $r:=w_u$.
The remaining $m-2$ vertices in $V(K_m)\setminus\{u,a\}$ have a common coordinate $y$.
By the eigenvalue\,--\,eigenvector equations, we have 
$\lambda x = (m-2)y$, $\lambda y = r + x + (m-3)y$, and 
$\lambda r = (m-2)y + w_1$. Moreover,
\[
\frac{x}{y} = \frac{m-2}{\lambda} = 1 + O(m^{-1}), \quad
\frac{r}{y} = \lambda - m + 3 - \frac{x}{y} = 1 + O(m^{-1}).
\]
Using $\|\bm{w}\|=1$ and $\sum_{j=1}^s w_j^2 = O(m^{-2})$, we deduce
$1 = my^2 (1 + O(m^{-1}))$. Therefore
$x = m^{-1/2} (1 + O(m^{-1}))$, $r = m^{-1/2} (1 + O(m^{-1}))$. 
\smallskip

Thus, in either case, if $e=ab$, then
$w_a = m^{-1/2} (1 + o(1))$, and $w_b = m^{-1/2} (1 + o(1))$.
By the Rayleigh principle, $\lambda_1(X_s(K_m, u)) - \lambda_1(X_s(K_m-e, u))
\geq 2w_aw_b = \Omega (1/m)$.
\end{proof}

Let $\bm{z}$ be the positive Perron vector of $X_s(F, u)$. Put
\begin{equation}\label{eq:defect-normalization}
S := \sum_{v\in F} z_v, \qquad
c_v := \frac{(\lambda+1)z_v}{S}, \qquad
\Delta := m-1-\lambda.
\end{equation}

\begin{lemma}\label{lem:exact-defect-identity}
Let $F$ be a connected graph, and $\bm{z}$ be the positive Perron vector of $X_s(F, u)$,
let $w$ be a vertex in $V(F)$ at which $\bm{z}$ is maximum.
If $z_u < z_w$, then
\begin{equation}\label{eq:exact-defect-identity}
0 < c_v\leq 1 \quad (v\in F), \qquad
\Delta = \sum_{v\in F} (1 - c_v).
\end{equation}
Let $u = v_0,v_1,\ldots,v_d=w$ be a shortest path to $w$, and put $c=c_u$. Then
$c_{v_i}\leq\min\{1,\lambda^ic\}$. If $\deg_F(u)\geq 2$, then
\begin{equation}\label{eq:Psi-defect-lower}
\Delta\geq\Psi_d(c):=
d+2-c-\min\{2,\lambda c\} - \sum_{i=2}^d\min\{1,\lambda^ic\}.
\end{equation}
Moreover, $c\geq (1 - O(m^{-1})) \lambda^{-d}$.
\end{lemma}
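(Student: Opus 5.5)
The identity for $\Delta$ comes from summing the definition of $c_v$; each inequality comes from the eigenvalue--eigenvector equation of $X_s(F,u)$ applied at single vertices of $F$. Throughout, $\lambda=\lambda_1(X_s(F,u))$, $\bm z>0$, and for $v\in V(F)$ the equation reads $\lambda z_v=\sum_{x\in N_F(v)}z_x+\mathbf 1_{\{v=u\}}z_{y_1}$.

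\emph{Identity and $0<c_v\le1$.} Positivity of $c_v$ is immediate from $\bm z>0$. Summing the definition over $F$ gives $\sum_{v\in F}c_v=(\lambda+1)S/S=\lambda+1$. Hence $\sum_{v\in F}(1-c_v)=m-1-\lambda=\Delta$, which is the identity in \eqref{eq:exact-defect-identity}.
\begin{itemize}
\item For $v\neq u$ we have $(\lambda+1)z_v=\sum_{x\in N_F[v]}z_x\le S$, so $c_v\le1$.
\item For $v=u$ the equation carries the extra pendant term $z_{y_1}$, so this argument fails. Instead, the hypothesis $z_u<z_w$ forces $w\neq u$. The previous bullet then gives $c_u<c_w\le1$.
\end{itemize}

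\emph{The path bounds and \eqref{eq:Psi-defect-lower}.} The equation at $v_{i-1}$ gives $\lambda z_{v_{i-1}}\ge z_{v_i}$, so $z_{v_i}\le\lambda^i z_u$ and $c_{v_i}\le\lambda^ic$. Together with $c_{v_i}\le1$ this gives $c_{v_i}\le\min\{1,\lambda^ic\}$. Since $z_u<z_w$, we have $d\ge1$.

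Because $\deg_F(u)\ge2$, pick a neighbour $u'\neq v_1$ of $u$. Then $u'$ is not any $v_i$ with $i\ge2$, as $P$ is a shortest path. The equation at $u$ gives $\lambda z_u\ge z_{v_1}+z_{u'}$, so $c_{v_1}+c_{u'}\le\lambda c$. Since each of $c_{v_1},c_{u'}$ is at most $1$, we get $c_{v_1}+c_{u'}\le\min\{2,\lambda c\}$.

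Now bound $\Delta=\sum_{v\in F}(1-c_v)$ from below, dropping all terms other than those listed (each dropped term is $\ge0$):
\begin{itemize}
\item $v_0=u$ contributes $1-c$;
\item the pair $\{v_1,u'\}$ contributes $2-c_{v_1}-c_{u'}\ge 2-\min\{2,\lambda c\}$;
\item each $v_i$ with $2\le i\le d$ contributes at least $1-\min\{1,\lambda^ic\}$.
\end{itemize}
Adding these $1+2+(d-1)=d+2$ unit terms gives exactly $\Psi_d(c)$.

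\emph{The bound $c\ge(1-O(m^{-1}))\lambda^{-d}$.} Walking back along $P$ with $z_{v_{i-1}}\ge z_{v_i}/\lambda$ gives $z_u\ge\lambda^{-d}z_w$. (Alternatively, use the continuant inequality of Lemma~\ref{lem:root-attenuation}; the pendant term at $u$ only helps there.) Hence
\[
c=\frac{(\lambda+1)z_u}{S}\ge\lambda^{-d}c_w .
\]
Since $z_w$ is the maximum over $F$, we have $S\le mz_w$. Therefore $c_w\ge(\lambda+1)/m\ge1-(C-1)/m$, using the standing assumption $\lambda\ge m-C$. This proves the claim.

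The only step needing care is $c_u\le1$, where the pendant term prevents the direct bound. The hypothesis $z_u<z_w$ resolves it by comparison with $c_w$. The rest is bookkeeping, with one check: the extra neighbour $u'$ is distinct from the path vertices counted separately.
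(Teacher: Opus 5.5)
Your proposal is correct and follows the paper's proof essentially step for step. It uses the closed-neighbourhood bound $(\lambda+1)z_v\le S$ for $v\neq u$ and the comparison $c_u<c_w\le 1$ for $u$, the same geometric bound along the shortest path, the same extra neighbour of $u$ to get $\Psi_d(c)$, and the same estimate $S\le m z_w$ giving $c_w\ge(\lambda+1)/m$.
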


\begin{proof}
We first prove that $c_v\leq1$.  Let $v\in V(F)\setminus\{u\}$.
The eigenvalue\,--\,eigenvector equation at $v$ gives 
$(\lambda + 1) z_v = \sum_{x\in N_F(v)\cup\{v\}} z_x$.
We may rewrite this as
\begin{equation}\label{eq:eigen-equation-for-z}
(\lambda + 1) z_v = S - \sum_{x\in V(F)\setminus(N_F(v)\cup\{v\})} z_x
\leq S.
\end{equation}
Consequently, $c_v\leq 1$ ($v\in V(F)\setminus\{u\})$. By the definition,
$c_u < c_w$. Since $w\neq u$, inequality \eqref{eq:eigen-equation-for-z} applies to $w$, 
and therefore $c_u < c_w\leq 1$. Hence, $0 < c_v\leq 1$ for all $v\in V(F)$. 

Directly from the definition of $c_v$,
\[
\sum_{v\in V(F)} c_v = \sum_{v\in V(F)} \frac{(\lambda + 1)z_v}{S} = \frac{\lambda+1}{S}
\sum_{v\in V(F)} z_v = \lambda + 1.
\]
Since $|V(F)|=m$, it follows that
$\sum_{v\in V(F)}(1-c_v) = m - (\lambda + 1) = \Delta$. This finishes the proof of \eqref{eq:exact-defect-identity}.

We next compare consecutive coordinates on the path.
For $1\leq i<d$, the eigenvalue\,--\,eigenvector equation at $v_i$ gives
$\lambda z_{v_i}\geq z_{v_{i+1}}$.
For $i=0$, we have
$\lambda z_u=z_{y_1}+\sum_{x\in N_F(u)}z_x\geq z_{v_1}$.
Thus, for every $0\leq i<d$,
$z_{v_{i+1}}\leq\lambda z_{v_i}$.  It follows that
$z_{v_i}\leq\lambda^i z_u$ ($0\leq i\leq d$).
Multiplying by the positive factor $(\lambda+1)/S$, we obtain
$c_{v_i}\leq\lambda^i c$.
On the other hand, $c_{v_i}\leq 1$. Combining the two bounds yields
$c_{v_i}\leq\min\{1, \lambda^ic\}$ ($0\leq i\leq d$).

Now assume that $\deg_F(u)\geq 2$.
Since $v_1$ is one neighbor of $u$, we may choose another neighbor
$a\in N_F(u)\setminus\{v_1\}$. Then $u,a,v_1,\ldots,v_d$
are all distinct. The eigenvalue\,--\,eigenvector equation at $u$ gives
$z_a + z_1\leq\lambda z_u$. Multiplying by $(\lambda + 1)/S$ gives
$c_a + c_{v_1}\leq\lambda c$. Together with $c_v\leq 1$ ($v\in F$), we have $c_a+c_{v_1}\leq 2$.
Consequently, $c_a + c_{v_1}\leq\min\{2, \lambda c\}$. 
By $\Delta = \sum_{v\in F} (1 - c_v)$, we have
\begin{align*}
\Delta
& \geq (1 - c_u) + (1 - c_a) + \sum_{i=1}^{d} (1 - c_{v_i}) \\
& = d + 2 - c - (c_a + c_{v_1}) - \sum_{i=2}^{d} c_{v_i} \\
& \geq d + 2 - c - \min\{2, \lambda c\} - \sum_{i=2}^{d} \min\{1,\lambda^ic\}.
\end{align*}

It remains to prove the lower bound on $c=c_u$. Since $w$ is a
vertex at which $\bm{z}$ is maximal on $F$, we have
$z_w\geq\frac{1}{m} \sum_{v\in V(F)} z_v = S/m$.
Therefore
\[
c_w = \frac{(\lambda+1)z_w}{S} \geq
\frac{(\lambda+1)(S/m)}{S} = \frac{\lambda+1}{m}. 
\]
The hypothesis $m-C\leq\lambda\leq m$, with $C$ fixed, implies
\[
\frac{\lambda + 1}{m} \geq \frac{m-C+1}{m}
= 1 - \frac{C-1}{m} = 1 - O(m^{-1}).
\]
Hence $c_w\geq 1 - O(m^{-1})$. 

Finally, the previously proved bound $c_{v_i}\leq\lambda^ic$, applied
with $i=d$, gives $c_w=c_{v_d}\leq\lambda^dc$.  Combining this with
$c_w\geq1-O(m^{-1})$, we obtain $\lambda^dc\geq1-O(m^{-1})$,
and therefore $c\geq (1-O(m^{-1}))\lambda^{-d}$.
This completes the proof.
\end{proof}

The free-end coordinate has the exact expression
\begin{equation}
 \eta=\frac{z_u}{p_s(\lambda)}.
 \label{eq:free-end-exact}
\end{equation}
Indeed, this follows by solving the three-term recurrence backwards from
$\lambda z_s = z_{s-1}$.

We need two comparisons. Let $\eta_m$ denote the Perron entry of the leaf $y_s$ for
$X_s(K_m, u)$. Let $\eta_{m-1}$ denote the Perron entry of the leaf $y_{s+1}$ for
$X_{s+1}(K_{m-1}, u)$; the latter graph has the same order as $X_s(B,u)$.

\begin{lemma}\label{lem:two-complete-competitors}
If $u$ is not a Perron maximum, then
\begin{equation}
 \log\frac{\eta}{\eta_m}
 \geq \log c+\frac{s}{m+2}\Delta - O(m^{-1}).
 \label{eq:first-complete-comparison}
\end{equation}
If $\Delta = O(1)$ and $A:=s/m$, then, uniformly in $c$ and $\Delta$,
\begin{align*}
\log\frac{\eta}{\eta_m}
& = \log c + A\Delta + o(1), \\
\log\frac{\eta}{\eta_{m-1}}
& = \log c + A\Delta + \log m - A + o(1).
\end{align*}
\end{lemma}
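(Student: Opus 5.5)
We compare the leaf coordinates through the exact formula \eqref{eq:free-end-exact}, $\eta=z_u/p_s(\lambda)$, and its analogues $\eta_m=a_m/p_s(\mu_m)$ and $\eta_{m-1}=a_{m-1}'/p_{s+1}(\mu_{m-1})$. Here $a_{m-1}'$ and $\mu_{m-1}$ are the attachment coordinate and spectral radius of $X_{s+1}(K_{m-1},u)$. Thus
\[
\log\frac{\eta}{\eta_m}=\log\frac{z_u}{a_m}+\log\frac{p_s(\mu_m)}{p_s(\lambda)},
\]
and the plan is to estimate the two terms separately.

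\emph{The path-polynomial ratio.} Write $\mu_m-\lambda=(m-1-\lambda)+(\mu_m-m+1)=\Delta+O(m^{-2})$ by Lemma~\ref{lem:complete-block-estimates}. As in \eqref{eq:cp-path-log-derivative}, the log-derivative $p_s'/p_s=\sum_k 1/(t-\lambda_k(P_s))$ lies in $[s/(t+2),\,s/(t-2)]$. Since $\lambda\le m$ and $\lambda,\mu_m\le m$, integrating from $\lambda$ to $\mu_m$ gives
\[
\log\frac{p_s(\mu_m)}{p_s(\lambda)}\ \ge\ \frac{s}{m+2}\bigl(\Delta+O(m^{-2})\bigr)
\]
when $\Delta\ge0$. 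We also have $s\,O(m^{-2})=O(m^{-1}\log m)$; this error should be absorbed, or more precisely it is $O(\log m/m)=o(1)$, and we will have to check that the claimed $O(m^{-1})$ really holds, perhaps by using $\mu_m\le m-1+O(m^{-2})$ more carefully. If $\Delta<0$ the bound still holds with the reversed inequality direction handled similarly, using $s/(t-2)$. When $\Delta=O(1)$, the same bounds together with $t=m+O(1)$ give $\log(p_s(\mu_m)/p_s(\lambda))=\frac{s}{m}\Delta+O(s/m^2)=A\Delta+o(1)$.

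\emph{The attachment coordinates.} We need $\log(z_u/a_m)\ge\log c-O(m^{-1})$. Since $c=(\lambda+1)z_u/S$, this amounts to $S\le(\lambda+1)m^{-1/2}(1+O(m^{-1}))$. Cauchy--Schwarz gives $S\le\sqrt m\,\|\bm z|_F\|\le\sqrt m$, so $z_u\ge cm^{1/2}/(\lambda+1)$. Combined with $a_m=m^{-1/2}(1+O(m^{-2}))$, this yields $z_u/a_m\ge c\,m/(\lambda+1)$. For the lower bound this is $\ge c(1-O(m^{-1}))$ only if $\lambda\le m-1+O(1/m)$; otherwise we get an extra $-\log(1-\Delta/m)$ type term. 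So we use $m/(\lambda+1)=1+\Delta/(\lambda+1)$, which is $\ge 1-O(m^{-1})$ when $\Delta\ge -O(1)$, and that follows from $\lambda\le m$. For the asymptotic equality we need the matching upper bound $S=\sqrt m(1-o(1))$, i.e.\ the mass of $\bm z$ on $F$ is nearly equidistributed and the pendant-path mass is $O(m^{-2})$ (by the decay argument of \eqref{eq:sum-mass}). This is the main obstacle. We would argue that since $\sum_v(1-c_v)=\Delta=O(1)$ with $c_v\le1$, all but $O(1)$ vertices have $c_v=1-o(1)$. Hence $z_v=(1+o(1))S/(\lambda+1)$ for most $v$, and normalization gives $S^2=(1+o(1))(\lambda+1)^2/m$, that is, $S=\sqrt m(1+o(1))$. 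This yields $\log(z_u/a_m)=\log c+o(1)$ uniformly.

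\emph{The second competitor.} For $X_{s+1}(K_{m-1},u)$, Lemma~\ref{lem:complete-block-estimates} with $m-1$ gives $\mu_{m-1}=m-2+O(m^{-2})$ and $a'_{m-1}=(m-1)^{-1/2}(1+o(1))$. Then
\[
\log\frac{p_{s+1}(\mu_{m-1})}{p_s(\lambda)}=\log\sigma(\mu_{m-1})+\log\frac{p_s(\mu_{m-1})}{p_s(\lambda)}+o(1)=\log m+A(\Delta-1)+o(1),
\]
using the same log-derivative integration over an interval of length $\Delta-1+o(1)$, together with $p_{s+1}/p_s=\sigma+o(1)$ from \eqref{eq:path-hyperbolic}. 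Adding $\log(z_u/a'_{m-1})=\log c+o(1)$ gives the third formula.
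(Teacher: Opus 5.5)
The decomposition $\log(\eta/\eta_m)=\log(z_u/a_m)+\log\bigl(p_s(\mu_m)/p_s(\lambda)\bigr)$ and your two asymptotic formulas are fine and essentially the paper's. You sandwich $p_s'/p_s$ between $s/(t+2)$ and $s/(t-2)$ instead of differentiating the closed form; both give $A+o(1)$, and the second-competitor step matches. The gap is in the first inequality \eqref{eq:first-complete-comparison}. Its $O(m^{-1})$ error is not cosmetic: Lemma~\ref{lem:near-complete-terminal} plays it against gains of order only $(\log m)/m$.

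Your attachment step runs the wrong way. Since $z_u=cS/(\lambda+1)$, the bound $\log(z_u/a_m)\ge\log c-O(m^{-1})$ needs a \emph{lower} bound $S\ge(\lambda+1)m^{-1/2}(1-O(m^{-1}))$. Cauchy--Schwarz, $S\le\sqrt m$, only yields $z_u\le c\sqrt m/(\lambda+1)$. So your bound ``$z_u/a_m\ge c\,m/(\lambda+1)$'' is unsupported, and your equidistribution argument controls $S$ only up to a factor $1+o(1)$. The paper gets the lower bound from the Rayleigh quotient. By the decay $z_{y_j}\le 3\lambda^{-j}z_u$ the pendant edges contribute $O(m^{-1})$, and the edges of $F$ contribute at most $S^2-\sum_{v\in F}z_v^2$, so $S^2\ge\lambda-O(m^{-1})\ge m-O(1)$. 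A fix closer to your own idea also works. Since $0<c_v\le1$, we have $\sum_F c_v^2\le\sum_F c_v=\lambda+1$, i.e.\ $S^2\ge(\lambda+1)\sum_{v\in F}z_v^2=(\lambda+1)(1-O(m^{-2}))$. Hence $z_u/a_m\ge c\sqrt{m/(\lambda+1)}\,(1-O(m^{-2}))\ge c(1-O(m^{-2}))$, because $\lambda+1=m-\Delta\le m$.

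The path step also leaves an unresolved loss. You write $\mu_m-\lambda=\Delta+O(m^{-2})$ and worry about an $s\cdot O(m^{-2})=O(\log m/m)$ error, which is exactly the order that cannot be lost downstream. There is no loss, but the reason is the \emph{lower} bound on $\mu_m$, not the upper one. Since $K_m$ is a proper subgraph of $X_s(K_m,u)$, $\mu_m>m-1$, so $\mu_m-\lambda>\Delta$ exactly.

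Finally, no $\Delta<0$ case arises, and your sketch for that case would again lose $O(\log m/m)$. The hypothesis that $u$ is not a Perron maximum gives $0<c_v\le1$ on $F$, hence $\Delta=\sum_{v\in F}(1-c_v)\ge0$ by Lemma~\ref{lem:exact-defect-identity}. Supplying $\Delta\ge 0$ and $c_v\le 1$ is precisely what that hypothesis is for.
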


\begin{proof}
For $1\leq j\leq s$, solving the eigenvector recurrence backward
from the leaf $y_s$ gives
$z_j = \frac{p_{s-j}(\lambda)}{p_s(\lambda)} z_u$.
Using \eqref{eq:path-hyperbolic}, we obtain
\[
\frac{p_{s-j}(\lambda)}{p_s(\lambda)}
= \sigma(\lambda)^{-j} \frac{1 - \sigma(\lambda)^{-2(s-j+1)}}{1 - \sigma(\lambda)^{-2(s+1)}}.
\]
Since $\lambda = m + O(1)$, we see $\sigma(\lambda) = \Theta(m)$. Moreover,
$\sigma(\lambda) = \lambda - \lambda^{-1} + O(\lambda^{-3})$.
Since $s = \Theta(m\log m)$,
\[
\Big(\frac{\lambda}{\sigma(\lambda)}\Big)^s
= \exp\Big(O\left(\frac{s}{m^2}\right)\Big) = 1 + o(1).
\]
It follows that, for $1\leq j\leq s$, $z_j\leq 3\lambda^{-j} z_u$. Consequently,
\[
\sum_{j=1}^s z_j^2\leq 9z_u^2\sum_{j=1}^{\infty} \lambda^{-2j}
= \frac{9z_u^2}{\lambda^2 - 1} = O(m^{-2}) z_u^2 \leq O(m^{-2}).
\]

The contribution of the pendant-path edges to the Rayleigh quotient is
$2z_uz_1 + 2\sum_{j=1}^{s-1} z_jz_{j+1}$.
By $z_j\leq 3\lambda^{-j} z_u$, we have $2z_uz_1
\leq 6\lambda^{-1}z_u^2 = O(m^{-1})$, and
\[
2\sum_{j=1}^{s-1} z_jz_{j+1}\leq
18 z_u^2\sum_{j=1}^{\infty} \lambda^{-2j-1} = O(m^{-3})z_u^2.
\]
Therefore the total contribution of the pendant-path edges to
$\bm{z}^{\top} A(X_s(F,u)) \bm{z}$ is $O(m^{-1})$.

We now estimate $S$.  The contribution from the edges inside $F$ is
\[
2\sum_{vx\in E(F)} z_vz_x
\leq 2\sum_{vx\subseteq V(F)} z_vz_x
= \Bigg(\sum_{v\in V(F)} z_v\Bigg)^2 - \sum_{v\in V(F)} z_v^2
= S^2 - \sum_{v\in V(F)} z_v^2.
\]
Combining the contribution from $F$ with the contribution from the
pendant path gives
\[
\lambda \leq S^2 - \sum_{v\in V(F)} z_v^2 + O(m^{-1})
\leq S^2 + O(m^{-1}).
\]
Since $\lambda\geq m-C$, $S^2\geq m - C - O(m^{-1}) = m-O(1)$.
Conversely, by the Cauchy--Schwarz inequality and $\|\bm{z}\| = 1$,
$S^2 \leq m\sum_{v\in V(F)} z_v^2 \leq m$. It follows that $S^2 = m + O_C(1)$.
Taking square roots gives $S = \sqrt{m} (1 + O(m^{-1}))$.

From the definition of $c$, $z_u = \frac{cS}{\lambda + 1}$.
Since $\lambda + 1 = m - \Delta$
and $\lambda\geq m-C$, we have $\Delta = O(1)$.
Lemma~\ref{lem:complete-block-estimates} gives
$z_u^{(m)} = m^{-1/2} (1 + O(m^{-2}))$. Consequently,
\[
\frac{z_u}{z_u^{(m)}} = \frac{cS}{(\lambda + 1) z_u^{(m)}}
= c\,\frac{\sqrt{m} (1 + O_C(m^{-1}))}{(m - \Delta)m^{-1/2}(1 + O(m^{-2}))}
= c\left(1 + O_C(m^{-1})\right).
\]
Taking logarithm gives
\begin{equation}
\log\frac{z_u}{z_u^{(m)}} = \log c+O(m^{-1}).
 \label{eq:detailed-root-log-comparison}
\end{equation}

We next compare the path polynomials. Since
$\eta = z_u/p_s(\lambda)$, and $\eta_m = z_u^{(m)}/p_s(\mu_m)$, we deduce
\begin{equation}\label{eq:detailed-eta-ratio}
\log\frac{\eta}{\eta_m} = \log\frac{z_u}{z_u^{(m)}}
+ \log p_s(\mu_m) - \log p_s(\lambda).
\end{equation}
The polynomial $p_s(x)$ has degree $s$, and all its zeros lie in
$[-2,2]$. If these zeros are denoted by
$\xi_1,\ldots,\xi_s$, then, for $x>2$, $p_s(x) = \prod_{j=1}^s (x - \xi_j)$.
Taking the logarithmic derivative gives
$\frac{p_s'(x)}{p_s(x)} = \sum_{j=1}^s \frac{1}{x - \xi_j}$.
Since $\xi_j\geq - 2$, $x - \xi_j\leq x+2$, we have
\begin{equation}
\frac{\mathrm{d}}{\mathrm{d} x}\log p_s(x) \geq\frac{s}{x+2}.
\label{eq:detailed-log-derivative}
\end{equation}
Since $\lambda = m - 1 - \Delta$ and $\mu_m\geq m-1$,
we have $\mu_m - \lambda\geq\Delta\geq 0$.
Furthermore, the maximum degree of $X_s(K_m,u)$ is $m$, so
$\mu_m\leq m$. It follows from \eqref{eq:detailed-log-derivative} that
\begin{align}
\log p_s(\mu_m) - \log p_s(\lambda)
& = \int_{\lambda}^{\mu_m} \frac{\mathrm{d}}{\mathrm{d} x}\log p_s(x)\,\mathrm{d} x
\geq \int_{\lambda}^{\mu_m} \frac{s}{x+2}\,\mathrm{d} x \nonumber \\
& \geq \frac{s}{m+2}(\mu_m-\lambda)\geq \frac{s}{m+2}\Delta. \label{eq:log-diff}
\end{align}
Substituting \eqref{eq:detailed-root-log-comparison} and \eqref{eq:log-diff} into
\eqref{eq:detailed-eta-ratio}, we obtain
$\log\frac{\eta}{\eta_m}\geq\log c + \frac{s}{m+2}\Delta - O(m^{-1})$.
This proves \eqref{eq:first-complete-comparison}.

We now derive the sharper asymptotic expansions.  Assume $\Delta=O(1)$
and put $A=s/m$.
For $x>2$, we obtain
\begin{equation}\label{eq:detailed-log-P}
\log p_s(x) = (s+1)\log\sigma(x) - \log\big(\sigma(x) - \sigma(x)^{-1}\big)
+ \log\left(1 - \sigma(x)^{-2s-2}\right).
\end{equation}
For $x=m+O(1)$, the last term is $O(\sigma(x)^{-2s-2}) = o(1)$.
Moreover, $\sigma(x) - \sigma(x)^{-1} = \sqrt{x^2-4}$ and
$(\log\sigma(x))' = (x^2 - 4)^{-1/2}$.
Differentiating \eqref{eq:detailed-log-P} and using $s = \Theta(m\log m)$, we obtain, uniformly for
$x=m+O(1)$,
\begin{align}
\frac{\mathrm{d}}{\mathrm{d} x}\log p_s(x)
& = \frac{s+1}{\sqrt{x^2-4}} - \frac{x}{x^2-4} + o(1) \nonumber \\
& = \frac{s}{m} + O\Big(\frac{s}{m^2}\Big) + O(m^{-1}) + o(1) \nonumber \\
& = \frac{s}{m} + o(1) = A + o(1), \label{eq:derivative-log}
\end{align}
Now $\lambda=m-1-\Delta$ and $\mu_m=m-1+O(m^{-2})$, so $\mu_m-\lambda=\Delta+O(m^{-2})$.
Integrating \eqref{eq:derivative-log} over this interval
gives $\log p_s(\mu_m) - \log p_s(\lambda)
= (A + o(1))(\mu_m - \lambda) = A\Delta + o(1)$.
Combining this with $\log (z_u/z_u^{(m)}) = \log c + o(1)$
in \eqref{eq:detailed-eta-ratio} yields $\log (\eta/\eta_m)
= \log c + A\Delta + o(1)$.

It remains to prove the last inequality. Let
$\mu_{m-1} = \lambda_1(X_{s+1}(K_{m-1},u))$
and let $a_{m-1}$ be the coordinate at $u$ in its positive unit
Perron vector.  Lemma~\ref{lem:complete-block-estimates}, applied with
$m-1$ in place of $m$, gives $\mu_{m-1} = m - 2 + O(m^{-2})$ and
$a_{m-1} = (m-1)^{-1/2} (1+O(m^{-2}))$. Then
$\eta_{m-1} = \frac{a_{m-1}}{p_{s+1}(\mu_{m-1})}$. Therefore
\begin{equation}\label{eq:detailed-eta-mminusone-ratio}
\log\frac{\eta}{\eta_{m-1}}
= \log\frac{z_u}{a_{m-1}} +
\log p_{s+1}(\mu_{m-1}) -\log p_s(\lambda).
\end{equation}
Since $a_m/a_{m-1} = 1 + o(1)$, Equation \eqref{eq:detailed-root-log-comparison} gives
\begin{equation}\label{eq:temporary-3}
\log\frac{z_u}{a_{m-1}}
= \log\frac{z_u}{a_m} + \log\frac{a_m}{a_{m-1}} = \log c+o(1).
\end{equation}
To compare the two path polynomials, split the difference as
\begin{align}
\log p_{s+1}(\mu_{m-1}) - \log p_s(\lambda)
& = \left(\log p_{s+1}(\mu_{m-1}) - \log p_s(\mu_{m-1})\right) \nonumber \\
& \quad + \left(\log p_s(\mu_{m-1}) - \log p_s(\lambda)\right).
\label{eq:detailed-P-splitting}
\end{align}

The first bracket accounts for the additional edge of the pendant
path.  From the explicit formula for $p_s$,
\[
\begin{aligned}
 \frac{p_{s+1}(x)}{p_s(x)}
& = \sigma(x) \frac{1 - \sigma(x)^{-2s-4}}{1 - \sigma(x)^{-2s-2}}.
\end{aligned}
\]
For $x=m+O(1)$, the fraction on the right is $1+o(1)$, while
$\sigma(x)=m(1+o(1))$. Therefore
\begin{equation}\label{eq:detailed-one-more-edge}
\log p_{s+1}(x) - \log p_s(x)
= \log m+o(1) \qquad (x=m+O(1)).
\end{equation}
Applying this at $x=\mu_{m-1}$ shows that the first bracket in
\eqref{eq:detailed-P-splitting} equals
$\log m+o(1)$. For the second bracket, observe that
\[
\mu_{m-1} - \lambda =
\big(m - 2 + O(m^{-2})\big) - (m - 1 - \Delta)
= \Delta - 1 + O(m^{-2}).
\]
Using \eqref{eq:derivative-log} gives
$\log p_s(\mu_{m-1}) - \log p_s(\lambda) = A(\Delta - 1) + o(1)$.
Substituting this and \eqref{eq:detailed-one-more-edge} into
\eqref{eq:detailed-P-splitting}, we obtain
$\log p_{s+1}(\mu_{m-1}) - \log p_s(\lambda) = \log m+A(\Delta-1)+o(1)$.
Substituting this and \eqref{eq:temporary-3} into \eqref{eq:detailed-eta-mminusone-ratio} yields
the desired result.
\end{proof}

\begin{lemma}\label{lem:endpoint-interval-calculation}
Assume that $1\leq d\leq D$, $\lambda=m+O(1)$, and
$A\geq (1 - \frac1{2D})\log m$.
For $c$ in the range $(1 - O(m^{-1})) \lambda^{-d}, 1)$, define
\[
 \mathcal E(c)= \log c+A\Psi_d(c)+\max\{0,\log m-A\}.
\]
Then $\mathcal E(c)>0$ for all sufficiently large $m$, except possibly in a
shrinking neighborhood of $c=1$. 
\end{lemma}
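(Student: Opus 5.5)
The function $\Psi_d$ is piecewise linear in $c$, with breakpoints at $c=\lambda^{-i}$ for $i=1,\dots,d$ and at $c=2/\lambda$. So I will split the admissible range $[(1-O(m^{-1}))\lambda^{-d},1)$ into the intervals $I_k:=[\lambda^{-k},\lambda^{-k+1}]$ for $k=1,\dots,d$, with $k=d$ extended down to the left endpoint. On each $I_k$, $\log c\geq -k\log\lambda-O(m^{-1})$. The next step is to bound $\Psi_d$ from below on each piece.

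For $k\geq 2$ and $c\in I_k$, the terms with $i\geq k$ have $\lambda^i c\geq1$, so $\min\{1,\lambda^ic\}=1$. The terms with $2\leq i<k$ satisfy $\lambda^ic\leq \lambda^{i-k+1}$, which sums to at most $1+O(m^{-1})$. Also $\min\{2,\lambda c\}\leq \lambda^{2-k}$, and $c\leq\lambda^{1-k}$. This gives
\[
\Psi_d(c)\geq d+2-(d-k+1)-1-O(m^{-1})=k-O(m^{-1}),
\]
or slightly better; note that the $i=1$ slot contributes through $\min\{2,\lambda c\}$. For $k=1$, i.e.\ $c\in[\lambda^{-1},1)$, all $\min\{1,\lambda^ic\}$ with $i\geq2$ equal $1$. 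Then $\Psi_d(c)=3-c-\min\{2,\lambda c\}$, which equals $1-c$ once $c\geq 2/\lambda$, and is larger, namely $3-c-\lambda c\geq 1-c$, for $c<2/\lambda$.

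Now I combine the pieces, using $\log\lambda=\log m+O(m^{-1})$ and $A\geq(1-\frac1{2D})\log m$. For $k\geq2$,
\[
\mathcal E(c)\geq -k\log m+A(k-O(m^{-1}))+\max\{0,\log m-A\}-o(1).
\]
If $A\geq\log m$, this is at least $k(A-\log m)-o(1)\geq -o(1)$. This is tight, so I will keep an extra additive constant: the bound on the sum over $2\leq i<k$ actually leaves an extra $\approx 1-\lambda^{-1}$, which makes $\Psi_d(c)\geq k+1-\lambda^{2-k}-O(m^{-1})$ for $k\geq 3$. For $k=2$ the $-c-\min\{2,\lambda c\}$ part gives $\Psi\geq d+2-(d-1)-c-\lambda c\geq 3-2/\lambda-... $; I will redo this bookkeeping carefully and aim for a bound of the form $\Psi_d(c)\geq k+\tfrac12$ when $k\geq2$. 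That yields a surplus of order $A/2\to\infty$. If instead $A<\log m$, the max term adds $\log m-A$, and the total becomes $-(k-1)(\log m-A)+A\cdot(\text{surplus})\geq -(k-1)\frac{\log m}{2D}+\frac A2$. Since $k-1<D$, this is at least $\log m\bigl(\tfrac12-\tfrac1{2D}-\tfrac1{2}(1-\tfrac1{2D})^{-1}\cdot0\bigr)$, positive of order $\log m$. I will verify the constants, using the surplus $\tfrac12 A\geq \tfrac14\log m$ against the loss of at most $\frac{D-1}{2D}\log m$. If this is not sufficient, I will sharpen the surplus to $\Psi\geq k+1-O(m^{-1})$ for $k\geq 2$, which I expect holds except near the breakpoint $c=\lambda^{-1}$.

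For $k=1$, $\log c\geq -\log\lambda$ and $\Psi\geq1-c$. When $c\leq 1-\epsilon$ for fixed $\epsilon$, I get $\mathcal E\geq\log c+A\epsilon$. This is $\geq -\log m+\epsilon(1-\tfrac1{2D})\log m$ only if $\epsilon$ is large, so I need a finer analysis here. Using $\Psi= 3-c-\lambda c$ for $c<2/\lambda$, near $c=\lambda^{-1}$ one has $\Psi\approx 2$ and $\mathcal E\approx -\log m+2A>0$. For $c\in[2/\lambda,1)$, the function $\log c+A(1-c)$ is concave in $c$, so it is minimized at an endpoint. At $c=2/\lambda$ it is $\approx-\log m+A>0$ when $A>\log m$; otherwise the max term restores positivity. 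At $c=1$ it is $0$, and it stays positive on $[2/\lambda,1-\delta_m]$ for some $\delta_m\to0$, because the derivative $1/c-A<0$ there. This is precisely the excluded shrinking neighborhood of $c=1$. The main obstacle is the constant bookkeeping at the breakpoints $c\approx\lambda^{-k}$ with $k\geq2$ in the regime $A<\log m$, where the loss $(k-1)(\log m-A)$ must be beaten by the surplus in $\Psi_d$. There I will rely on $k-1\leq D-1$ and the hypothesis $\log m-A\leq \log m/(2D)$.
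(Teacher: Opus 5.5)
There is a genuine gap in your treatment of the pieces $I_k=[\lambda^{-k},\lambda^{-(k-1)}]$ with $k\ge 2$, which you yourself flag as the main obstacle. On each such piece you bound $\log c$ by its minimum $-k\log\lambda$, which is attained at the left endpoint. You bound $\Psi_d$ by its minimum $k-O(m^{-1})$, which is attained at the right endpoint. These worst cases occur at opposite ends of $I_k$. Combining them gives only $\mathcal E\ge -(k-1)(\log m-A)-o(1)$ when $A<\log m$, which is negative.

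The surplus you hope to recover does not exist. On $I_k$ one has exactly $\Psi_d(c)=k+1-(1+\lambda+\cdots+\lambda^{k-1})c$, so $\Psi_d(\lambda^{-(k-1)})=k-O(\lambda^{-1})$. For example, $\Psi_d(\lambda^{-2})=3-\lambda^{-1}-\lambda^{-2}$, not roughly $4$. Hence neither $\Psi_d\ge k+\tfrac12$ nor $\Psi_d\ge k+1-O(m^{-1})$ holds on $I_k$. The failure occurs near the right endpoint of every $I_k$, not only near $c=\lambda^{-1}$. Incidentally, $\lambda^{-1}$ is not a breakpoint of $\Psi_d$ at all; the $i=1$ slot breaks at $2/\lambda$.

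The missing idea is to couple the two terms by evaluating them at the same $c$. Since $\Psi_d$ is affine on $I_k$, $\mathcal E$ is concave there, so it suffices to check the two endpoints.
\begin{itemize}
\item At $c=\lambda^{-k}$ you get $-k\log m+(k+1)A+\max\{0,\log m-A\}+o(1)$.
\item At $c=\lambda^{-(k-1)}$ you get $-(k-1)\log m+kA+\max\{0,\log m-A\}+o(1)$.
\end{itemize}
If $A\ge\log m$, both values are at least $A$. If $A<\log m$, they equal $\log m\,[1-k(1-A/\log m)]$ and $\log m\,[1-(k-1)(1-A/\log m)]$ up to $o(1)$. Both are at least $\tfrac12\log m$, because $k\le D$ and $1-A/\log m\le 1/(2D)$. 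This is precisely where the hypothesis $A\ge(1-\frac1{2D})\log m$ enters, and your decoupled inequality cannot see it. Then the $O(m^{-1})$ perturbation below $\lambda^{-d}$ goes through as you indicate.

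Your handling of $c\ge\lambda^{-1}$ is essentially the paper's concavity check on $[\lambda^{-1},2/\lambda]$ and $[2/\lambda,1]$. One small fix there: at $c=2/\lambda$, keep the $\log 2$ term. When $A-\log m$ is $o(1)$, the quantity $-\log m+A$ alone gives no positive margin against the $O(A/\lambda)$ errors.
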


\begin{proof}
We first determine lower bounds for $\Psi_d(c)$. We consider the following three cases.
\medskip

\noindent\emph{Case 1}: $\lambda^{-k}\leq c\leq\lambda^{-(k-1)}$ for some $2\leq k\leq d$.
For $1\leq i\leq k-1$, we see $\lambda^i c \leq\lambda^{i-k+1}\leq 1$. Hence
$\min\{1, \lambda^ic\} = \lambda^ic$ when $2\leq i\leq k-1$.
For $k\leq i\leq d$, then $\lambda^i c\geq\lambda^{i-k}\geq 1$. Therefore
$\min\{1, \lambda^ic\} = 1$ when $k\leq i\leq d$. It follows that
\begin{align}
\Psi_d(c)
& = d + 2 - c - \lambda\, c - \sum_{i=2}^{k-1} \lambda^ic - (d-k+1) \nonumber \\
& = k + 1 - (1 + \lambda + \cdots + \lambda^{k-1}) c.  \label{eq:Psi-Ik-detailed}
\end{align}
We now substitute the bound into $\mathcal{E}(c)$. For $2\leq k\leq d$, define
\[
f_k(c)= \log c+ A\big[k+1-(1+\lambda+\cdots+\lambda^{k-1})c\big] + \max\{0,\log m-A\}.
\]
Note that $\mathcal{E}(c)\geq f_k(c)$. Moreover,
$f_k''(c) = -1/c^2 < 0$.
Thus $f_k$ is strictly concave in $c$. Hence, we have
$f_k(c)\geq \min\{f_k(\lambda^{-k}), f_k(\lambda^{-(k-1)})\}$.
At the left endpoint $c=\lambda^{-k}$,
$(1+\lambda+\cdots+\lambda^{k-1})\lambda^{-k} = O(\lambda^{-1})$.
Therefore
\begin{align}
f_k(\lambda^{-k})
& = -k\log\lambda +(k+1)A + \max\{0,\log m-A\} + O(A/\lambda) \nonumber \\
& = -k\log m + (k+1)A + \max\{0,\log m-A\} +o(1). \label{eq:f_k-lower-bound-1}
\end{align}
At the right endpoint $c=\lambda^{-(k-1)}$, we have
$(1+\lambda+\cdots+\lambda^{k-1}) \lambda^{-(k-1)} = 1 + O(\lambda^{-1})$. Hence
\begin{align}
f_k(\lambda^{-(k-1)})
& = -(k-1)\log\lambda + kA + \max\{0,\log m-A\} + O(A/\lambda) \nonumber \\
& = -(k-1)\log m + kA + \max\{0,\log m-A\} +o(1). \label{eq:f_k-lower-bound-2}
\end{align}
We show that both endpoint values are positive. This is clear 
for $A\geq \log m$. Suppose next that $A<\log m$.
The principal term in \eqref{eq:f_k-lower-bound-1} becomes
\begin{align*}
 -k\log m + (k+1)A + \max\{0,\log m-A\}
 & = -k\log m + (k+1)A + \log m - A \\
 & = \log m\Big[1 - k\Big(1 - \frac{A}{\log m}\Big)\Big] > \frac{\log m}{2},
\end{align*}
where the last inequality uses the hypothesis
$1 - \frac{A}{\log m}\leq\frac1{2D}$ and
$k\leq d\leq D$.
Similarly, the principal term in \eqref{eq:f_k-lower-bound-2} becomes
$-(k-1)\log m + kA + \max\{0,\log m-A\} > (\log m)/2$.
By concavity, $\mathcal{E}(c) > 0$.

By Lemma \ref{lem:exact-defect-identity}, $c\geq (1 - O(m^{-1}))\lambda^{-d}$,
which may be slightly smaller than $\lambda^{-d}$.  Replacing
$\lambda^{-d}$ by $(1-O(m^{-1}))\lambda^{-d}$
changes $\log c$ by only $O(m^{-1})$.  Each truncated term
$\min\{1,\lambda^ic\}$ changes by at most
$\lambda^iO(m^{-1})\lambda^{-d}=O(m^{-1})$; after multiplication by
$A=O(\log m)$, the total change is $o(1)$ because $d\leq D$ is fixed.
Therefore the preceding positivity
remains valid on the entire allowed interval below $\lambda^{-1}$.
\medskip

\noindent\emph{Case 2}: $\lambda^{-1}\leq c\leq 2\lambda^{-1}$.
Then $1\leq\lambda\, c\leq 2$, so $\min\{2,\lambda c\} = \lambda c$.
For every $i\geq2$, $\lambda^ic\geq\lambda^{i-1}\geq 1$,
and hence $\min\{1,\lambda^ic\}=1$. Substituting into
\eqref{eq:Psi-defect-lower} gives
\begin{equation}\label{eq:Psi-Ione-low-detailed}
\Psi_d(c) = d+2-c-\lambda c-(d-1) = 3 - (\lambda + 1)c.
\end{equation}
We next consider $\lambda^{-1}\leq c\leq2\lambda^{-1}$. Define
$g(c) = \log c + A[3 - (\lambda + 1)c] + \max\{0,\log m-A\}$. Again,
$g''(c) = -c^{-2} < 0$, so it suffices to check the endpoints.
At $c=\lambda^{-1}$,
\begin{equation}\label{eq:g-lower-bound-1}
g(\lambda^{-1}) = -\log m + 2A + \max\{0,\log m-A\} + o(1).
\end{equation}
If $A\geq \log m$, this is at least $\log m + o(1)$.  If $A < \log m$, 
then the right-hand side of \eqref{eq:g-lower-bound-1} becomes $A + o(1) > 0$. 
Thus $g(\lambda^{-1})>0$. At $c=2\lambda^{-1}$,
\begin{equation}
g(2\lambda^{-1}) = \log2+(A - \log m)+ \max\{0, \log m - A\} + o(1).
\end{equation}
If $A < \log m$, the right-hand side is $\log 2 + o(1) > 0$.
If $A\geq \log m$, then it is at least
$\log2+o(1)>0$. Hence, $\mathcal{E}(c) > 0$.
\medskip 

\noindent\emph{Case 3}: $2\lambda^{-1}\leq c\leq 1$.
Then $\lambda\, c\geq 2$, so $\min\{2,\lambda c\}=2$.
Also, for $i\geq 2$, $\lambda^ic\geq 2\lambda^{i-1}>1$,
so $\min\{1,\lambda^ic\}=1$. Consequently,
$\Psi_d(c)=d+2-c-2-(d-1)=1-c$. Let $h(c):=\log c + A(1-c) + \max\{0, \log m - A\}$.
Again, $h''(c)=-c^{-2}<0$. At the left endpoint, we have
$h(2\lambda^{-1}) > 0$. At the right endpoint, $h(1) = \max\{0, \log m - A\}\geq 0$.
If $\max\{0, \log m - A\} > 0$, concavity immediately gives $h(c) > 0$ throughout the
whole interval $2\lambda^{-1}\leq c\leq 1$. If $\max\{0, \log m - A\} = 0$, 
then $h(1) = 0$, but strict concavity, together with $h(2\lambda^{-1})>0$, gives
$h(c)>0$. However, the positive lower bound tends to zero as $c\to1$.
Therefore this argument does not give a uniform positive margin in a
neighborhood of $c=1$. 
\end{proof}

\begin{lemma}\label{lem:near-complete-terminal}
Assume that $u$ is not a Perron maximum on $F$ and that $F\neq K_m$.
Let $c\in (1-o(1), 1)$. Then $\log\frac{\eta}{\eta_m}>0$.
\end{lemma}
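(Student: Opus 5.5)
The plan is to start from the comparison inequality \eqref{eq:first-complete-comparison} of Lemma~\ref{lem:two-complete-competitors}, which applies because $u$ is not a Perron maximum:
\[
\log\frac{\eta}{\eta_m}\geq \log c+\frac{s}{m+2}\,\Delta-O(m^{-1}).
\]
Put $A':=s/(m+2)=\Theta(\log m)$. The goal is to show that $A'\Delta$ beats both the negative term $\log c$ and the error $O(m^{-1})$. I will use two independent lower bounds on $\Delta$: one absorbs $\log c$, the other absorbs the $O(m^{-1})$ error.

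\emph{First bound on $\Delta$, from $c$.} By the exact identity \eqref{eq:exact-defect-identity} of Lemma~\ref{lem:exact-defect-identity}, every summand $1-c_v$ is nonnegative. Keeping only the summand at $v=u$ gives $\Delta\geq 1-c$.

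\emph{Second bound on $\Delta$, from $F\neq K_m$.} Since $F\neq K_m$, there is a missing edge $e$, and $F$ is a spanning subgraph of $K_m-e$. Then $X_s(F,u)$ is a subgraph of $X_s(K_m-e,u)$, so monotonicity of the spectral radius gives $\lambda\leq\lambda_1(X_s(K_m-e,u))$. By \eqref{eq:missing-edge-pole-gain} and \eqref{eq:complete-block-estimates},
\[
\lambda\leq \mu_m-\frac{\kappa}{m}=m-1-\frac{\kappa}{m}+O(m^{-2}),
\]
hence $\Delta\geq \kappa/m-O(m^{-2})\geq \kappa/(2m)$ for large $m$.

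\emph{Combining the two bounds.} Average them: $\Delta\geq \tfrac12(1-c)+\tfrac{\kappa}{4m}$. This gives
\[
\log\frac{\eta}{\eta_m}\geq\Bigl[\log c+\tfrac{A'}{2}(1-c)\Bigr]+\frac{\kappa A'}{4m}-O(m^{-1}).
\]
Since $c\in(1-o(1),1)$, we have $\log c\geq-(1-c)(1+o(1))\geq -2(1-c)$. Because $A'/2\to\infty$, the bracket is nonnegative for large $m$. The remaining term $\kappa A'/(4m)=\Theta(\log m/m)$ dominates the $O(m^{-1})$ error, since the implied constant there depends only on $C$. This yields $\log(\eta/\eta_m)>0$.

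The main point to check carefully is that the error term in \eqref{eq:first-complete-comparison} really is $O(m^{-1})$, uniformly in $c$, and does not secretly depend on $\Delta$. It comes from $S=\sqrt m(1+O_C(m^{-1}))$ and $\lambda+1=m-\Delta$ with $\Delta=O(1)$, so it is uniform in $c$. This uniformity is exactly what lets the extra $\Theta(\log m/m)$ gain from the missing edge dominate. Without that gain, the argument of Lemma~\ref{lem:endpoint-interval-calculation} fails near $c=1$.
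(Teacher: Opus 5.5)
Your proof is correct and uses the same ingredients as the paper's proof: the comparison inequality \eqref{eq:first-complete-comparison}, the bound $\Delta\geq 1-c$ from \eqref{eq:exact-defect-identity}, and the missing-edge gain \eqref{eq:missing-edge-pole-gain}. The only difference is packaging. The paper splits into the cases $1-c\geq\kappa/(4m)$ and $1-c<\kappa/(4m)$, and in the second case it feeds $\mu_m-\lambda\geq\kappa/m$ directly into \eqref{eq:log-diff}. You instead turn the missing-edge gain into $\Delta\geq\kappa/(2m)$ via $\mu_m=m-1+O(m^{-2})$ and average the two lower bounds on $\Delta$, which handles both regimes at once.
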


\begin{proof}
Suppose that $F\neq K_m$ and $c\in (1-o(1),1)$. Choose two nonadjacent vertices $a,b\in V(F)$, and put $e=ab$.
Then $F\subseteq K_m-e$. Therefore $X_s(F,u)\subseteq X_s(K_m-e, u)$.
Hence, $\lambda\leq\lambda_1(X_s(K_m - e, u))$.
Lemma~\ref{lem:complete-block-estimates} gives
$\mu_m - \lambda_1(X_s(K_m-e, u)) \geq\frac{\kappa}{m}$.
Consequently, $\mu_m - \lambda\geq\frac{\kappa}{m}$.
Put $x=1-c=o(1)$. Since $0<c\leq1$, we have $x\geq0$. Also, $\Delta\geq 1 - c = x$.
Suppose first that $x\geq\frac{\kappa}{4m}$.
By \eqref{eq:first-complete-comparison} and $\Delta\geq x$, we obtain
\[
\log\frac{\eta}{\eta_m}
\geq\log c+\frac{s}{m+2}\Delta-O(m^{-1})
\geq\log(1-x)+\frac{s}{m+2}x-O(m^{-1}).
\]
For $0\leq x\leq 1/2$, $\log(1-x)\geq -x-x^2$. Hence
\[
\log\frac{\eta}{\eta_m}\geq
\left(\frac{s}{m+2}-1\right)x-x^2-O(m^{-1}) > 0,
\]
where the last inequality uses
$\frac{s}{m+2}=(1+o(1))\log m$.

Suppose next that $0\leq x<\frac{\kappa}{4m}$.
It follows from $\mu_m - \lambda\geq\frac{\kappa}{m}$ and \eqref{eq:log-diff} that
\[
\log\frac{\eta}{\eta_m}\geq\log c + \frac{\kappa s}{m(m+2)} - O(m^{-1}).
\]
Since $c=1-x$ and $x<\kappa/(4m)$, we have $\log c=\log(1-x)\geq-2x\geq-\frac{\kappa}{2m}$.
Hence,
\[
\log\frac{\eta}{\eta_m} > \frac{\kappa s}{m(m+2)} - O(m^{-1})
= \Omega\Big(\frac{\log m}{m}\Big).
\]
This finishes the proof.
\end{proof}

\begin{proposition}\label{prop:canonical-terminal-replacement}
At least one of the following holds:
\begin{enumerate}
\item[$(1)$] $F = K_m$;
\item[$(2)$] $u$ is a leaf of $F$, and, if $v$ is the unique neighbor
 of $u$ in $F$, then $X_s(F,u) = X_{s+1}(F-u,v)$;
\item[$(3)$] replacing $(F,s)$ by either $(K_m,s)$ or $(K_{m-1},s+1)$ strictly decreases the free-end Perron
 coordinate and the ratio of these two coordinates is at least $\mathrm{exp}\left(\Omega\left(\frac{\log m}{m}\right)\right)$.
\end{enumerate}
In the third alternative, the logarithm of the ratio between the old
and new free-end coordinates is bounded below by a positive constant
outside the near-complete range. In the near-complete range it is at
least $\Omega((\log m)/m)$.
\end{proposition}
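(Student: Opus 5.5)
We argue by cases on whether $u$ is a Perron maximum on $F$ and on $\deg_F(u)$, and then combine Lemmas~\ref{lem:exact-defect-identity}, \ref{lem:two-complete-competitors}, \ref{lem:endpoint-interval-calculation} and~\ref{lem:near-complete-terminal}. Assume $F\neq K_m$, since otherwise (1) holds.

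\emph{Leaf case.} If $\deg_F(u)=1$ with neighbor $v$, then deleting the edge $uv$ from the path's point of view shows that $X_s(F,u)$ is literally $X_{s+1}(F-u,v)$. This is alternative (2).

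\emph{Perron-maximum case.} Suppose $u$ is a Perron maximum on $F$. Then the condition $z_u<z_w$ of Lemma~\ref{lem:exact-defect-identity} fails, but the relevant bounds should still be available. The identity $\Delta=\sum(1-c_v)$ with $c_v\le1$ still holds for $v\ne u$. Moreover $c_u\le 1+O(1/m)$, which follows from $(\lambda+1)z_u\le S+z_{y_1}$ and $z_{y_1}=O(z_u/m)$. So $c=1-o(1)$ or $c\ge 1$ up to $O(1/m)$. I would handle this by rerunning the proof of Lemma~\ref{lem:near-complete-terminal} with $\log c\ge -O(1/m)$. That argument only uses \eqref{eq:first-complete-comparison}, whose derivation (via \eqref{eq:detailed-root-log-comparison} and \eqref{eq:log-diff}) did not use $u$ non-maximal beyond $c>0$. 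Combined with $\mu_m-\lambda\ge\kappa/m$ from Lemma~\ref{lem:complete-block-estimates}, it gives $\log(\eta/\eta_m)\ge \kappa s/(m(m+2))-O(1/m)=\Omega((\log m)/m)$. I expect this to be the delicate point, because the $O(1/m)$ error terms must be beaten by $\kappa s/m^2\asymp (\log m)/m$. This works, since $s=\Theta(m\log m)$ gives an extra $\log m$ factor.

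\emph{General case.} Now suppose $\deg_F(u)\ge2$ and $u$ is not a Perron maximum. Lemma~\ref{lem:exact-defect-identity} gives $\Delta\ge\Psi_d(c)$ with $d=\dist_F(u,w)\le D$ and $c\ge(1-O(m^{-1}))\lambda^{-d}$; note $d\ge1$. First consider the case $\Delta$ unbounded (more precisely $\Delta\ge$ a large constant). Then \eqref{eq:first-complete-comparison} together with $\log c\ge -d\log\lambda-o(1)$ gives $\log(\eta/\eta_m)\ge -D\log m+(1+o(1))(s/m)\Delta$, which is positive and bounded below once $\Delta>2D$, using $A=s/m=\Theta(\log m)$.

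For $\Delta=O(1)$, Lemma~\ref{lem:two-complete-competitors} gives
\[
\max\Big\{\log\frac{\eta}{\eta_m},\log\frac{\eta}{\eta_{m-1}}\Big\}=\log c+A\Delta+\max\{0,\log m-A\}+o(1)\ge\mathcal E(c)+o(1).
\]
We then need the hypothesis $A\ge(1-\frac1{2D})\log m$ of Lemma~\ref{lem:endpoint-interval-calculation}. In the present application $s\approx n-2q$ and $q\approx x_n$, so $A=(n-2q)/q\approx 2\log x_n+2-2=2\log m$ by \eqref{eq:xn-equation}. I would record that this standing assumption holds, or state it as part of the setting. Lemma~\ref{lem:endpoint-interval-calculation} then gives $\mathcal E(c)$ positive, indeed bounded below by a positive constant (the endpoint values computed there are at least $\min\{\log 2,A\}$ up to $o(1)$), outside a shrinking neighborhood of $c=1$. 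Inside that neighborhood, Lemma~\ref{lem:near-complete-terminal} applies (as $F\neq K_m$) and gives the $\Omega((\log m)/m)$ bound for replacement by $K_m$. Hence in all cases where (1) and (2) fail, one of the two replacements decreases the free-end coordinate by the stated factor, which is alternative (3) together with the quantitative remarks.

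The main obstacle is keeping the margin uniform. Specifically, the $o(1)$ error from Lemma~\ref{lem:two-complete-competitors} must be dominated by the positive margin of $\mathcal E$. I would make the "shrinking neighborhood" explicit as $c\ge1-\epsilon_m$, with $\epsilon_m\to0$ chosen slower than the $o(1)$ errors. This makes the Case 3 lower bound $h(c)\ge \epsilon_m(A-1)/2$ exceed those errors.
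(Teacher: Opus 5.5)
Your proposal is correct and is essentially the paper's proof. A leaf root gives (2). A Perron-maximal root is compared directly with $K_m$ using $\mu_m-\lambda\ge\kappa/m$ and \eqref{eq:log-diff}, which is the paper's Case 2. Otherwise $\Delta\ge\Psi_d(c)$ is fed into Lemmas~\ref{lem:two-complete-competitors}, \ref{lem:endpoint-interval-calculation} and~\ref{lem:near-complete-terminal}, and your explicit choice of $\epsilon_m$ makes the paper's ``shrinking neighborhood'' step precise.

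There are three small slips, none of which affects the argument. First, in the Perron-maximum case you need the lower bound $c_u\ge(\lambda+1)/m\ge 1-O(m^{-1})$, which comes from $z_u\ge S/m$; the upper bound $c_u\le 1+O(m^{-1})$ you cite points the wrong way. Second, the proposition is applied to the halves $X_L(B_i,u_i)$ with $L\approx(n-2q)/2$, so $A\sim\log m$ rather than $2\log m$; this still gives $A\ge(1-\frac1{2D})\log m$. Third, your large-$\Delta$ subcase is vacuous, since $\Delta\le C-1$.
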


\begin{proof}
Choose a vertex $w\in V(F)$ at which $\bm{z}$ is maximum on $F$, and let
$u = v_0,v_1,\ldots,v_d = w$ be a shortest $u$\,--\,$w$ path in $F$. 
If $F = K_m$, then (1) holds, so there is nothing to prove. Henceforth suppose that
$F\neq K_m$.

We divide the proof into the following two cases.
\medskip

\noindent\emph{Case 1}: $z_u < z_w$. In this case, $d\geq 1$, also by assumption, $1\leq d\leq D$.
\smallskip

\emph{Subcase 1.1}: $\deg_F(u) = 1$.
Let $v$ be the unique neighbor of $u$ in $F$. Since $u$ is a
leaf and $F$ is connected, the graph $F-u$ is connected. Hence, $X_s(F,u) = X_{s+1}(F-u, v)$.
This is (2).
\smallskip

\emph{Subcase 1.2}: $\deg_F(u)\geq 2$.
Since $z_u < z_w$, Lemma~\ref{lem:exact-defect-identity} and Lemma \ref{lem:endpoint-interval-calculation} gives
$\Delta\geq 0$. Since $\lambda = m + O(1)$, we also have
$\Delta = m - 1 -\lambda = O(1)$. 
Define $A:=s/m$. Because $s/(m\log m)\to 1$, we have
$A/\log m\to 1$. It follows that, for all sufficiently large
$m$, $A\geq (1 - 1/(2D)) \log m$. 

By Lemma~\ref{lem:two-complete-competitors},
\begin{align}
\max\left\{\log\frac{\eta}{\eta_m}, \log\frac{\eta}{\eta_{m-1}}\right\} 
& = \log c+A\Delta+\max\{0,\log m-A\}+o(1) \\
& \geq \log c+A\Psi_d(c)+\max\{0,\log m-A\}+o(1).
 \label{eq:canonical-max-comparison}
\end{align}
Lemma \ref{lem:endpoint-interval-calculation} shows that $\mathcal E(c)$ has a
positive margin whenever $c$ is outside a shrinking interval next
to $1$. Consequently, outside that interval,
$\max\big\{\log\frac{\eta}{\eta_m}, \log\frac{\eta}{\eta_{m-1}}\big\} > 0$.
Therefore either $\eta_m<\eta$ or $\eta_{m-1}<\eta$.
When $c$ is very close to $1$, strictness follows from
Lemma~\ref{lem:near-complete-terminal}.

This proves (3) whenever $u$ is not a Perron maximum and $\deg_F(u)\geq 2$.
\medskip

\noindent\emph{Case 2:} $z_u = z_w$.

The Perron-vector recurrence along the pendant path gives
$z_{y_j}\leq 3\lambda^{-j} z_u$ ($1\leq j\leq s$).
Since $\lambda = m + O(1)$, it follows that
\begin{align}
\sum_{j=1}^s z_{y_j}^2 \leq 9z_u^2\sum_{j=1}^s \lambda^{-2j}
\leq 9z_u^2\frac{\lambda^{-2}}{1-\lambda^{-2}} = O(m^{-2}) z_u^2.
 \label{eq:canonical-handle-mass}
\end{align}
Since $\|\bm{z}\|_2 = 1$, we have
\[
1 = \sum_{v\in V(F)} z_v^2 + \sum_{j=1}^s z_{y_j}^2\leq mz_u^2 + O(m^{-2}) z_u^2.
\]
Therefore $z_u\geq m^{-1/2} \big(1-O(m^{-3})\big)$.
Lemma~\ref{lem:complete-block-estimates} gives
$a_m = m^{-1/2} (1 + O(m^{-2}))$. Hence,
$\frac{z_u}{a_m}\geq 1 - O(m^{-2})$. Taking logarithms gives
\begin{equation}
 \log\frac{z_u}{a_m}\geq -O(m^{-2}).
 \label{eq:canonical-root-log-bound}
\end{equation}

Recall that we are assuming $F\neq K_m$. Choose a missing edge
$e$ of $F$, we have $\mu_m - \lambda\geq\frac{\kappa}{m}$.
Using $\eta = z_u/p_s(\lambda)$ and $\eta_m = a_m/p_s(\mu_m)$, we have
$\log\frac{\eta}{\eta_m} = \log\frac{z_u}{a_m} + \log p_s(\mu_m) - \log p_s(\lambda)$.
By \eqref{eq:canonical-root-log-bound}, the first term is at least
$-O(m^{-2})$.  For the second term,
\eqref{eq:detailed-log-derivative} and
\eqref{eq:missing-edge-pole-gain} give
\[
 \log p_s(\mu_m) - \log p_s(\lambda) \geq
 \frac{s}{m+2}(\mu_m-\lambda)\geq
 \frac{\kappa s}{m(m+2)}.
\]
It follows that
\[
\log\frac{\eta}{\eta_m}\geq -O(m^{-2})
+ \frac{\kappa s}{m(m+2)} = \Omega\Big(\frac{\log m}{m}\Big) > 0.
\]
Thus $\eta_m<\eta$. Replacing $F$ by $K_m$, while keeping the same handle length,
strictly decreases the free-end coordinate. This is (3).
\end{proof}

\section{Completion of the proof}

We now combine the structural results from the preceding sections.  Recall
that the two terminal graphs are rooted at the vertices incident with the
joining path.  If such a root has degree one in its terminal graph, we may
regard that root as the first vertex of the path instead.  We call this
operation \emph{absorbing a leaf into the path}; it changes only the
description of the graph, not the graph itself.

\begin{lemma}\label{lem:Bi-clique}
The induced subgraph $G[V(B_i)] = B_i$ is complete graph.
\end{lemma}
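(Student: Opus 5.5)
We argue by contradiction, combining the two-pole gap formula with the replacement result Proposition~\ref{prop:canonical-terminal-replacement}. By Proposition~\ref{prop:corridor-is-path}, $G$ consists of $B_1$, a path $C=P_s$, and $B_2$, with $\widehat u_i$ joined to the ends of the path. First we absorb leaves into the path. If $\widehat u_i$ has degree one in $B_i$, we regard it as the first path vertex and replace $(B_i,\widehat u_i)$ by $(B_i-\widehat u_i,v)$, where $v$ is its neighbour. We repeat this as long as possible. By Theorem~\ref{thm:terminal-extraction}(2) each $B_i$ has bounded radius from $\widehat u_i$ and at least $q$ vertices. Hence only $O(1)$ absorptions occur, and the hypotheses of the setup remain valid for $F:=B_i$, $u:=\widehat u_i$, $m:=|V(B_i)|=q+O(1)$ and $s=\Theta(m\log m)$. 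These hypotheses are $m-C\le \lambda_1(X_s(F,u))\le m$, with $\lambda_1(X_s(F,u))=\vartheta_i$ by Lemma~\ref{lem:loaded-corridor-poles}(1), and $\operatorname{dist}_F(u,w)\le D$ with $D=11$.

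Next we express the gap through free-end coordinates. By Claim~\ref{claim:cp-original-gap} with the path connector, or more precisely by \eqref{eq:loaded-two-pole-formula} with $\tau_{P_s}=1/p_s$ and $D_{i,C}=\widehat z_i(\widehat u_i)^{-2}$, we get $\gap(G)\ge 2(1-O(q^{-2}))\,\widehat z_1(\widehat u_1)\widehat z_2(\widehat u_2)/p_s(x_*)$. Here $\widehat{\bm z}_i$ is the Perron vector of $X_s(B_i,\widehat u_i)=G[V(B_i)\cup V(C)]$. By \eqref{eq:free-end-exact} and the approximation $p_s(x_*)\approx p_s(\vartheta_i)$ via \eqref{eq:cp-path-log-derivative}, the product $\widehat z_i(\widehat u_i)/p_s(x_*)$ equals the free-end coordinate $\eta^{(i)}$ up to a factor $1+o(q^{-2})$. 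Thus, up to a factor $1+O(q^{-2})$, $\gap(G)\ge 2\sqrt{\eta^{(1)}_{\rm L}\eta^{(2)}_{\rm L}}\cdot(\text{path factor})$. Here the relevant quantity is the free-end coordinate of each half, viewed from the far end of the whole path.

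Now suppose some $B_i$, say $B_1$, is not complete. Since absorption has been carried out, alternative (2) of Proposition~\ref{prop:canonical-terminal-replacement} is excluded, so alternative (3) applies. Replacing $(B_1,s)$ by $(K_m,s)$ or by $(K_{m-1},s+1)$ keeps the order $n$ and lowers the free-end coordinate by a factor at least $\exp(\Omega(\log m/m))=1+\Omega(\log q/q)$. We form the graph $G'$ from the new terminal, the same path and $B_2$. Lemma~\ref{lem:loaded-corridor-poles}(3), applied to $G'$ through the symmetric comparison graphs of Claim~\ref{claim:cp-symmetric-gaps}, gives the gap up to a factor $1+O(q^{-2})$. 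The new gap is therefore smaller by a factor $1-\Omega(\log q/q)$, which beats the $O(q^{-2})$ errors.

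The main obstacle is the $(\vartheta_1-\vartheta_2)^2$ term in \eqref{eq:loaded-two-pole-formula}. The replacement moves $\vartheta_1$, and $G'$ need not have balanced poles, so its gap need not be small. To handle this we use the symmetrization trick from Proposition~\ref{prop:corridor-is-path}. We compare $G$ with the two symmetric graphs $G^{(1)},G^{(2)}$, each built from two copies of one (possibly replaced) terminal. By Claim~\ref{claim:cp-symmetric-gaps} and \eqref{eq:cp-path-product}, $\min_i\gap(G^{(i)})\le 2\sqrt{\eta^{(1)}\eta^{(2)}}$-type quantities. We apply the replacement to each terminal separately; if a terminal is already complete, we leave it unchanged. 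Then the geometric mean of the two symmetric gaps is at most $(1-\Omega(\log q/q))$ times the lower bound for $\gap(G)$. This contradicts minimality, so both $B_i$ are cliques. The delicate points are verifying that Claim~\ref{claim:cp-symmetric-gaps} holds with $K_m$ or $K_{m-1}$ terminals, which follows from Remark~\ref{rem:lcp-one-terminal} and Lemma~\ref{lem:complete-block-estimates}, and that path lengths $s$ versus $s\pm1$ do not spoil \eqref{eq:cp-path-product}. Both effects are already absorbed into the definition of $\eta$.
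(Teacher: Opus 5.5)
Your outline matches the paper's: absorb leaf roots, bound $\gap(G)$ below by $2\widehat z_1(\widehat u_1)\widehat z_2(\widehat u_2)/p_s(x_*)$, pass to the symmetric graphs $G^{(1)},G^{(2)}$, replace each terminal separately, and take a geometric mean. The gap is in the step that turns ``the free-end coordinate drops'' into ``the gap drops''; as you set it up, that step does not follow.

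You apply Proposition~\ref{prop:canonical-terminal-replacement} to $X_s(B_i,\widehat u_i)=G[V(B_i)\cup V(C)]$, with the whole path as pendant. So what it controls is $\eta_s=\widehat z(\widehat u)/p_s(\vartheta_i)$. The gap of a symmetric comparison graph is not monotone in $\eta_s$:
\begin{itemize}
\item By Claim~\ref{claim:cp-symmetric-gaps}, evaluated at the graph's own pole, the gap is about $2\widehat z(\widehat u)^2/p_s(\vartheta)=2\eta_s\,\widehat z(\widehat u)$.
\item The replacement raises the root coordinate by a factor of about $1/c$. Indeed Lemma~\ref{lem:two-complete-competitors} gives $\log(z_u/z_u^{(m)})=\log c+O(m^{-1})$, and $c$ may be as small as about $\lambda^{-d}$.
\item In logarithmic form, with $A=s/m$, replacing by $K_m$ lowers $\log\gap$ by $2\log c+A\Delta+o(1)$. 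The proposition only certifies $\log c+A\Delta>0$, plus the $K_{m-1}$ term.
\end{itemize}
Since $\log c\le 0$, the certified inequality does not imply the needed one. So a drop of the free-end coordinate by $\exp(\Omega(\log m/m))$ does not give a drop of the gap by a factor $1-\Omega(\log q/q)$. For the same reason your bound by ``$2\sqrt{\eta^{(1)}\eta^{(2)}}$-type quantities'' has no precise content.

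There is a second problem. Claim~\ref{claim:cp-symmetric-gaps} as proved uses $|\eta_i-x_*|=O(q^{-3})$. After a replacement the pole moves by $\Delta=O(1)$, so this fails, and the claim cannot be quoted at the common point $x_*$ for the new graphs.

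The missing idea is halving. Split each symmetric graph at the midpoint of its path into two isomorphic halves $X_L(B_i,\widehat u_i)$ with $L=(s+O(1))/2$. Then $L/(m\log m)\to 1$, which is the regime used in the proof of Proposition~\ref{prop:canonical-terminal-replacement}. Perform the replacement in both halves.
\begin{itemize}
\item For an edge midpoint, Lemma~\ref{lem:uniform-two-pole}(1),(2) shows the gap is twice the square of the half's free-end coordinate, up to a factor $1+O(q^{-1})$. For a vertex midpoint, Lemma~\ref{lem:zero-parity} gives the same.
\item Hence $\gap(G_i^{\mathrm{cl}})/\gap(G_i)=(\eta_i^{\mathrm{cl}}/\eta_i)^2(1+O(q^{-1}))$, where $\eta_i,\eta_i^{\mathrm{cl}}$ are the half free-end coordinates.
\item In logarithmic form, $2\log(\eta_L/\eta_L')=2\log c+(2L/m)\Delta+o(1)$. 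This is exactly the needed quantity, and Lemma~\ref{lem:endpoint-interval-calculation} with $A=L/m$ shows it is positive.
\end{itemize}
Finally, combine this with $T_{11}T_{22}/T_{12}^2=1+O(q^{-2})$ and $\gap(G)\ge 2T_{12}(1-O(q^{-2}))$ to reach the contradiction.
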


\begin{proof}
For $i=1,2$, write $m_i:=|V(B_i)|$, $\alpha_i=\lambda_1(B_i)$,
and let $a_i$ be the coordinate at $u_i$ of the unit Perron vector of
$B_i$.  For the 
function $F_i$ defined before
Lemma~\ref{lem:loaded-corridor-poles}, let $\vartheta_i$ be its unique zero
near $q$, and put $D_i = F_i'(\vartheta_i)$.

For a path with $s$ internal vertices, the two largest eigenvalues of $G$
are the two relevant solutions of
\begin{equation}
 F_1(x)F_2(x)-\frac{1}{p_s(x)^2}=0.
 \label{eq:completion-eigenvalue-equation}
\end{equation}
Each $F_i$ is strictly increasing near its zero.  If, for example,
$\vartheta_1\geq\vartheta_2$, then
$F_1(x)F_2(x)\leq0$ for
$\vartheta_2\leq x\leq\vartheta_1$.  Thus
\eqref{eq:completion-eigenvalue-equation} has no solution in that
interval.  The larger solution lies above both $\vartheta_i$, and the
smaller solution lies below both.  It follows that
\begin{equation}
 |\vartheta_1-\vartheta_2|\leq\gap(G)\leq q^{-60}.
 \label{eq:terminal-root-closeness}
\end{equation}

Choose $x_*$ between $\vartheta_1$ and $\vartheta_2$, and define the 
term
\begin{equation}
 T_{12}:= \frac{1}{p_s(x_*)\sqrt{D_1D_2}}.
 \label{eq:path-interaction-mixed}
\end{equation}
By Lemma~\ref{lem:loaded-corridor-poles},
$D_i=a_i^{-2} (1+O(q^{-2}))$.
Moreover, the formula for $p_s$ in \eqref{eq:path-hyperbolic} gives
\[
\frac1{p_s(x_*)} = (1 - \sigma(x_*)^{-2}) \sigma(x_*)^{-s}
\left(1 - \sigma(x_*)^{-2s-2}\right)^{-1}.
\]
Here $x_*=q+O(1)$, so $\sigma(x_*) = \Theta(q)$; since
$s=\Theta(q\log q)$, the last factor is $1+o(q^{-2})$. Therefore
\begin{equation}\label{eq:path-interaction-estimate}
T_{12} = (1 - \sigma(x_*)^{-2}) \sigma(x_*)^{-s} a_1a_2 (1 + O(q^{-2})).
\end{equation}
Lemma~\ref{lem:loaded-corridor-poles}\,(3) and \eqref{eq:terminal-root-closeness} also imply
\begin{equation}\label{eq:gap-T12}
\gap(G)\geq 2T_{12} (1 - O(q^{-2})).
\end{equation} 
Moreover, the value of $x_*$ in \eqref{eq:path-interaction-estimate} may be replaced
by either $\vartheta_i$ without affecting the relative error.  Indeed,
since $x_*$ lies between $\vartheta_1$ and $\vartheta_2$, \eqref{eq:terminal-root-closeness} gives
$|x_* - \vartheta_i|\leq |\vartheta_1 - \vartheta_2|\leq q^{-60}$.
All these values are $q+O(1)$, so
$(\log\sigma)'(x) = (x^2-4)^{-1/2} = O(q^{-1})$.
By the mean value theorem,
$|\log\sigma(x_*) - \log\sigma(\vartheta_i)| = O(q^{-1})\,O(q^{-60}) = O(q^{-61})$.
Since $s=\Theta(q\log q)$, we have
$s|\log\sigma(x_*) - \log\sigma(\vartheta_i)| = O(q^{-60}\log q) = o(q^{-2})$.
Consequently,
\[
\frac{\sigma(x_*)^{-s}}{\sigma(\vartheta_i)^{-s}}
= \exp\left(-s\big[\log\sigma(x_*) - \log\sigma(\vartheta_i)\big]\right)
= 1 + o(q^{-2}).
\]
Thus replacing $x_*$ by either $\vartheta_i$ introduces only a $1+o(q^{-2})$ multiplicative factor, which is absorbed by the existing $1+O(q^{-2})$ relative error.
\medskip

Put $d=m_2-m_1=O(1)$. Let $G_1$ consist of two copies of
$B_1$ joined by a path with $s+d$ internal vertices, and let
$G_2$ consist of two copies of $B_2$ joined by a path with $s-d$
internal vertices. Both graphs have exactly $m_1+m_2+s=|V(G)|$
vertices. As before, let $T_{11}$ and $T_{22}$ denote the analogues of
$T_{12}$ for these two graphs. Applying
\eqref{eq:path-interaction-estimate} to the three graphs gives
\[
\frac{T_{11}}{T_{12}} = \frac{a_1}{a_2}\sigma(x_*)^{-d} \big(1 + O(q^{-2})\big), \quad
\frac{T_{22}}{T_{12}} = \frac{a_2}{a_1}\sigma(x_*)^d \big(1 + O(q^{-2})\big).
\]
Consequently,
\begin{equation}
\frac{T_{11}T_{22}}{T_{12}^2} = 1 + O(q^{-2}).
\label{eq:completion-reflection-product}
\end{equation}
In particular, at least one of $T_{11}, T_{22}$ at most $(1 + O(q^{-2})) T_{12}$.  

We may therefore divide $G_i$ into two isomorphic halves at the midpoint of its joining path.
Depending on the parity of the path length, the two halves either are
joined by one edge or are both adjacent to one vertex. In either
case a half has the form $X_L(B_i,u_i)$ for some
$L = (s+O(1))/2 = (1+o(1))m_i\log m_i$.

The spectral radius of this half lies between $m_i-C$ and $m_i$ for an
absolute constant $C$.  Its Perron vector decreases along the pendant
path, so a largest coordinate occurs in $B_i$.  By
Theorem~\ref{thm:terminal-extraction}, every vertex of $B_i$ is at distance
at most $11$ from $u_i$.  Hence the hypotheses of
Proposition~\ref{prop:canonical-terminal-replacement} hold for each half.

First absorb every leaf root into the path. This does not alter either
the graph or the Perron coordinate at the free end of the half.  There are
at most $11$ such absorptions. If the remaining terminal graph is a
clique, leave it unchanged and set $c_i=1$. Otherwise,
by Proposition~\ref{prop:canonical-terminal-replacement}, replaces the half,
without changing its number of vertices, by either
$X_L(K_m,u)$ or $X_{L+1}(K_{m-1},u)$. If $\eta_i$ and
$\eta_i^{\mathrm{cl}}$ are the unit Perron-vector coordinates at the free
end before and after this replacement, respectively, then
\begin{equation}\label{eq:completion-clique-replacement-gain}
\eta_i^{\mathrm{cl}} = c_i\eta_i, \qquad
c_i\leq\exp\Big\{-\kappa\frac{\log q}{q}\Big\}
\end{equation}
for some absolute constant $\kappa>0$. Make the same replacement in both halves of $G_i$, and call the
resulting graph $G_i^{\mathrm{cl}}$. If a terminal loses
one vertex, the path gains one vertex on that side, so the parity of the
midpoint is unchanged.

Suppose first that the midpoint of $G_i$ is an edge. The two halves are
then joined by an edge between vertices whose Perron coordinates are both
$\eta_i$. Lemma~\ref{lem:uniform-two-pole} gives
\begin{equation}\label{eq:completion-edge-replacement-ratio}
\frac{\gap(G_i^{\mathrm{cl}})}{\gap(G_i)}
= c_i^2 \big(1 + O(q^{-2})\big).
\end{equation}
If the midpoint is a vertex, Lemma~\ref{lem:zero-parity} gives the same
comparison, with the slightly larger error
\begin{equation}\label{eq:completion-vertex-replacement-ratio}
\frac{\gap(G_i^{\mathrm{cl}})}{\gap(G_i)} = c_i^2 \big(1 + O(q^{-1})\big).
\end{equation}
Indeed, the additional denominator in that lemma is the spectral radius of
a half; both before and after replacement it is $q+O(1)$.

For $i\in\{1,2\}$, define $\mathcal{R}_i := \frac{\gap(G_i^{\mathrm{cl}})}{2T_{12}}$.
Lemma \ref{lem:loaded-corridor-poles} (3) show that $\gap(G_i) = 2T_{ii}(1 + O(q^{-1}))$. It follows from
\eqref{eq:completion-edge-replacement-ratio}--\eqref{eq:completion-vertex-replacement-ratio} that
\[
\mathcal{R}_i \leq\frac{T_{ii}}{T_{12}}c_i^2 \big(1 + O(q^{-1})\big).
\]
Multiplying these inequalities and using \eqref{eq:completion-reflection-product},
we obtain $\mathcal{R}_1\mathcal{R}_2\leq(c_1c_2)^2 (1 + O(q^{-1}))$.
If at least one reduced terminal graph is not complete, then at least one
of $c_1,c_2$ satisfies \eqref{eq:completion-clique-replacement-gain}, while the other is
at most $1$. Hence
\[
\mathcal{R}_1\mathcal{R}_2
= 1 - \Omega\Big(\frac{\log q}{q}\Big).
\]
Therefore, for some $i\in\{1,2\}$, we have $\gap(G_i^{\mathrm{cl}})
\leq2T_{12} (1 - \Omega((\log q)/q))$.
Since $(\log q)/q\gg q^{-2}$, this inequality and
\eqref{eq:gap-T12} give $\gap(G_i^{\mathrm{cl}}) < \gap(G)$,
a contradiction finishing the proof
\end{proof}
 
Now, we are ready to finfish the proof of Theorem~\ref{thm:main}.
\smallskip

\begin{proof}[Proof of Theorem~\ref{thm:main}]
Let $G$ be a connected $n$-vertex graph with minimum spectral gap. For
sufficiently large $n$, Theorem \ref{thm:terminal-extraction} shows that 
$G$ is obtained from the disjoint union $B_1\mathbin{\dot\cup}C\mathbin{\dot\cup} B_2$
by adding precisely the two edges $\widehat{u}_1\widehat{v}_1$ and $\widehat{u}_2\widehat{v}_2$,
where $\widehat{v}_1$, $\widehat{v}_2\in V(C)$ and $\widehat{u}_i\in V(B_i)$, $i=1,2$,
each $B_i$ have $q + O(1)$ vertices, where $q\sim n/(2\log n)$. Proposition~\ref{prop:corridor-is-path}
shows that $C$ is a path with $n - 2q + O(1) = (2+o(1))q\log q$ vertices.
After absorbing leaf roots into
the path whenever possible, Lemma \ref{lem:Bi-clique} shows that both $B_i$ are
cliques.

It remains to show that these two cliques have the same order. Let their
orders be $q_1$ and $q_2$. For a clique $K_r$, Lemma~\ref{lem:loaded-corridor-poles} (2) gives
$\vartheta(K_r) = r - 1 + O(q^{-2})$.
If $q_1\neq q_2$, then
\[
|\vartheta (K_{q_1}) - \vartheta(K_{q_2})|
= |q_1 - q_2| + O(q^{-2}) \geq 1 - O(q^{-2}),
\]
contradicting \eqref{eq:terminal-root-closeness}. Therefore $q_1=q_2$.
Therefore every connected $n$-vertex graph of minimum spectral gap is a
double kite, as claimed.
\end{proof}

\section*{Acknowledgments}
    
This work began while the second author was visiting the School of Mathematical Sciences, Anhui University in June 2025. The authors thank Josh Tobin for sharing his overall ideas on Stani\'c's conjecture and its relation with his work with the second author on the principal ratio of a graph.
In the early technical stage,
the authors used language-model-based tools to brainstorm candidate proof strategies for certain lemmas based on our high-level ideas.
Any suggestions arising from these tools served only as informal inspiration. The high-level strategy of the proof was formulated by the authors before any artificial intelligence tools were used in any way.

\end{document}